\pgfplotsset{width=10cm,compat=1.9,tick scale binop=\times}
\theoremstyle{plain}
\newtheorem{theorem}{Theorem}[section]
\newtheorem{lemma}[theorem]{Lemma}
\newtheorem{proposition}[theorem]{Proposition}
\newtheorem{corollary}[theorem]{Corollary}
\newtheorem{definition}[theorem]{Definition}
\newtheorem{remark}[theorem]{Remark}
\newtheorem{example}[theorem]{Example}
\date{}
\begin{document}
\title
{\bf{On irreducibility of eccentricity matrix of graphs and construction of $\epsilon-$equienergetic graphs}}
\author {\small Anjitha Ashokan
\footnote{anjitha\_p220118ma@nitc.ac.in} and Chithra A V
\footnote{chithra@nitc.ac.in} \\ \small Department of Mathematics, National Institute of Technology, Calicut,\\
\small Kerala, India-673601}
\date{}
\maketitle
\begin{abstract} 
The eccentricity matrix $\epsilon(G)$, of a connected graph $G$ is obtained by retaining the maximum distance from each row and column of the distance matrix of $G$ and the other entries are assigned with 0.  
In this paper, we discuss the eccentricity spectrum of subdivision vertex (edge) join of regular graphs. Also, we obtain new families of graphs having irreducible or reducible eccentricity matrix. 
Furthermore, we use these results to construct infinitely many  $\epsilon-$cospectral graph pairs as well as infinitely many pairs and triplets of $\epsilon-$cospectral $\epsilon-$equienergetic graphs. Moreover, we present some new family of $\epsilon-$integral graphs. \\
\noindent \textbf{Keywords:} 
                        eccentricity matrix, eccentricity spectrum, eccentricity equienergetic graphs, subdivision vertex (edge) join, irreducibility.\\ 
    \textbf{Mathematics Subject Classifications:} 05C50, 05C76.
\end{abstract}

\section{Introduction}
Let $G= (V(G), E(G))$ be a simple, $(p, q)$ connected undirected graph with vertex set $ V(G)= \{ v_{1}, v_{2}, \ldots, v_{p} \} $, and edge set $E(G)=\{e_{1},\ldots, e_{q}\}$. If two vertices $v_{i}$ and $v_{j}$ are adjacent in $G$, we write $v_{i}\sim   v_{j}$, and the edge between them is denoted by $e_{ij}$.The vertex $v_{i}$ is incident with $e_{j}$, if $v_{i}$ is an end vertex of edge $e_{j}$. 
The degree of a vertex $v_{i}$ is denoted by $deg(v_{i})$.
A graph $G$ is $r-$regular if the degree of every vertex is $r$.
The complement $\Bar{G}$ of a graph $G$ has $V(G)$ as its vertex set and two vertices are adjacent in $\Bar{G}$ if and only if they are not adjacent in $G.$ 
The $line$ $graph$ $L(G)$ of a graph $G$, is a graph with its vertex set as the edge set of $G$ and two vertices in $L(G)$ become adjacent in $L(G)$ if the corresponding edges of $G$ share a common end vertex. The line graph of the graph $L(G)$ is represented as $L^{2}(G).$
We denote by $K_{p}$ the complete graph on $p$ vertices and by $K_{p_{1},p_{2}}$ the complete bipartite graph on $p_{1}+p_{2}$ vertices.
Throughout, we represent $J$ and $I$ as the all-one and identity matrices, respectively, in an appropriate order.
The equitable quotient matrix \cite{brouwer2011spectra} of a partitioned matrix is denoted by $F.$
The $incidence$ $matrix$ $R=R(G)$ of a graph $G$, is the $p\times q$ matrix,
\begin{equation*}
   (R(G))_{ij} =\begin{cases} 
      1& \text{if } v_{i}\text{ is incident with }e_{j},\\
      0 & \text{otherwise.}
      \end{cases}
\end{equation*}
The $adjacency$  $matrix$ $A=A(G)$ of a graph $G$,  is a square symmetric matrix,  \begin{equation*}
    (A(G))_{ij}=\begin{cases}
            1 & \text{if } v_i \sim v_j,\\
            0 &\text{ otherwise.}
            \end{cases}
\end{equation*}

The $eigenvalues $ of $A (A-eigenvalues)$ are real and ordered as $\lambda_{1}\geq \lambda_{2}\geq\ldots\geq \lambda_{p}$. 
The $A-spectrum$ of $G$, $spec (G)$, is the collection of all  $A-eigenvalues$ of $G$ along with their multiplicities.
Two non-isomorphic graphs of the same order are cospectral if they have the same $A-spectrum$.
The $energy $ $E_{A}(G)$, of a graph $ G$ is defined as, 
$E_{A}(G)=\sum_{i=1}^{p}|\lambda_{i}|$.
If the energy of two graphs with the same order is equal, they are said to be equienergetic. 
A graph $G$ is $A-$integral if all $A-$ eigenvalues of $G$ are integers.
The adjacency matrix of the complement of a graph $G$ is $A(\Bar{G})=J-I-A(G).$

The $distance$ $matrix$ $D=D(G)$ of a graph $G$ is a symmetric matrix, whose rows and columns are indexed by the set of vertices, and whose $(ij)^{th}$ entry represents the distance between the vertices $v_{i}$ and $v_{j}$. The distance between two vertices $v_{i}$ and $v_{j}$ is denoted as $d(v_{i}, v_{j}).$ 

The $eccentricity $ $e(v_{i})$ of a vertex  $v_{i}$, is defined as $e(v_{i})=max\{d(v_{i},v_{j}); v_{i}\in V(G)\}$. If $d(v_{i},v_{k})=e(v_{i}),$ then $v_{k}$ is an eccentric vertex of $v_{i}$. The minimum and maximum eccentricity of all vertices in $G$ is known as $radius(G) (r(G))$ and $diameter$ of $G(diam(G))$ respectively. 
A graph $G$ is $self-$ $centered$ if $r(G)=diam(G)$.
In \cite{randiu2013dmax}, Randi\'{c} introduced the notion of eccentricity matrix of a graph, then known as $D_{MAX}-$ matrix and later in \cite{wang2018anti}, Wang et al. renamed it as eccentricity matrix. The $eccentricity$ $matrix $, $\epsilon= \epsilon(G) $ of a graph $G$ of order $p$ is defined as follows,
\begin{equation*}
(\epsilon (G))_{ij}=\begin{cases}
                         d(v_{i}, v_{j}) &  \text{if } d(v_{i}, v_{j})=min \{\epsilon(v_{i}), \epsilon(v_{j})\},\\
                         0 & \text{ otherwise}.
                         \end{cases}
                     \\
\end{equation*}
The eccentricity and adjacency matrix of a connected graph is obtained from the distance matrix by preserving only the largest and smallest non-zero distances in each row and column, respectively. Thus, the graph's eccentricity matrix is also known as its anti-adjacency matrix.
A graph $G$ is $eccentricity$ $regular$ ($\epsilon-$regular) if the row sum of $\epsilon(G)$ is constant.
A matrix $N$ is said to be $irreducible$ if it is not permutationally similar to an upper triangular block matrix, and is $reducible$ if there exists a  permutation matrix $P$ such that\begin{equation*}
    N= P^{T}\begin{pmatrix}
        A&B\\
        0&C
    \end{pmatrix}P, 
\end{equation*}  
where $A$ and $C$ are square block matrices.


In contrast to the adjacency and distance matrices of a connected graph, the eccentricity matrix of a graph need  not be irreducible. In \cite{wang2018anti}, the authors asked the following question: For which connected graphs the eccentricity matrix is either reducible or irreducible?

The eccentricity matrix of a complete bipartite graph is reducible, whereas that of a tree is irreducible \cite{mahato2020spectra}. One of our aim is to find families of connected graphs whose eccentricity matrix is either irreducible or reducible.

Eccentricity matrix of a graph is a real symmetric matrix and its eigenvalues ($\epsilon-eigenvalues$) are real. If $\epsilon_{1}>\ldots> \epsilon_{k}$ are the distinct $\epsilon-$eigenvalues of $G$, then the eccentricity spectrum($\epsilon-$spectrum) of $G$ is defined as,\begin{equation*}
    spec_{\epsilon}(G)=\begin{pmatrix}
        \epsilon_{1}&\epsilon_{2}&\cdots&\epsilon_{k}\\
        s_{1}&s_{2}&\cdots&s_{k}
    \end{pmatrix},
\end{equation*}
where $s_{j}$ indicates the algebraic multiplicity of the $\epsilon-$eigenvalue $\epsilon_{j}$. A graph 
$G$ is eccentricity integral ($\epsilon-$integral) if all the $\epsilon-$eigenvalues are integers.
The largest eigenvalue of $\epsilon(G)$ is called the eccentricity spectral radius ($\epsilon-$spectral radius) of $G$ ($\rho_{\epsilon}(G)$).
The $\epsilon$-$Wiener$ $index$ \cite{mahato2023spectral}, $W_{\epsilon}(G)$ of a graph $G$, defined as \begin{equation*}
    W_{\epsilon}(G)=\frac{1}{2}\underset{i,j}{\sum}(\epsilon(G))_{ij}.
\end{equation*}

Two non-isomorphic graphs of the same order are said to be eccentricity cospectral ($\epsilon-$cospectral) if they have the same $\epsilon-$spectrum, otherwise, they are non $\epsilon-$cospectral. In \cite{wang2019graph}, Wang et al. introduced the concept of eccentricity energy ($\epsilon-$energy), $E_{\epsilon}(G)$ and it is defined as the sum of absolute values of $\epsilon-$ eigenvalues of $G$. Two graphs having the same order are said to be  eccentricity equienergetic ($\epsilon-$equienergetic) if they have the same $\epsilon-$energy. Obviously, $\epsilon-$cospectral graphs are 
$\epsilon-$equienergetic. 
But $\epsilon-$equienergetic graphs need not be $\epsilon-$cospectral.
Here, we focus on the construction of non isomorphic non $\epsilon-$cospectral $\epsilon-$equienergetic  graphs.

In \cite{indulal2012spectrum}, Indulal introduced the graph operations, subdivision vertex join, and subdivision edge join of two graphs and determined its adjacency spectrum. In 2019 \cite{indulal2019distance}, the authors studied about the corresponding distance spectrum. 
Motivated from this we study the $\epsilon-$spectrum of subdivision vertex join and subdivision edge join of two regular graphs.

This paper is organized as follows: In Section 2, we collect the definitions and preliminary results needed. In Sections 3 and 4, we discuss the  $\epsilon-$ spectrum of subdivision vertex(edge) join and join of graphs.  
Also, we obtain new families of graphs having either irreducible or reducible eccentricity matrix. 
Moreover, we construct families of $\epsilon-$ cospectral graphs.
In Section 5, we present infinitely many pairs and triplets of non $\epsilon-$cospectral $\epsilon-$equienergetic graphs and some families of $\epsilon-$integral graphs.

\section{Preliminaries}
In this section, we collect some basic definitions and results which will be used in the subsequent sections.

\begin{definition}\label{subdivisiongraph}\cite{cvetkovic1980spectra}
 The subdivision graph $S(G)$ of a graph $G$ is obtained from $G$ by subdividing each edge of $G$ using a new vertex. The collection of such new vertices is denoted by $I(G)$.
\end{definition}
\begin{definition}\cite{indulal2012spectrum}
    The subdivision-vertex join $G_{1}\veedot G_{2}$ of two vertex disjoint graphs $G_{1}$ and $G_{2}$ is the graph obtained from $S(G_{1})$ and $G_{2}$ by joining each vertex of $V(G_{1})$ to every vertex of $V(G_{2})$. 
\end{definition}
\begin{definition}\cite{indulal2012spectrum}
    The subdivision-edge join $G_{1} \veebar G_{2}$ of two vertex disjoint graphs $G_{1}$ and $G_{2}$
    is the graph obtained from $S(G_{1})$ and $G_{2}$ by joining each vertex of $I(G_{1})$ with every vertex of $V(G_{2})$.
\end{definition}
    
\begin{definition}\cite{sorgun2023two}
    Let $G$ be a $(p,q)$ graph. The eccentric graph of $G$ is denoted by $G^{e}$ whose vertex set is $V(G^{e})=V(G)$ and 
       $v_{i} \sim v_{j}$ in $G^{e}$ if and only if $d(v_{i},v_{j})=min\{e(v_{i}),e(v_{j})\}$.
\end{definition}
\begin{theorem}\cite{sorgun2023two}\label{irreducibilitytheorem}
    Let $G$ be any  $(p,q)$ graph. Then the matrix $\epsilon(G)$ is an irreducible if and only if $G^{e}$ is a  connected graph.
\end{theorem}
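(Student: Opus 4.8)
The plan is to show that the eccentricity matrix $\epsilon(G)$ carries exactly the same off-diagonal support as the adjacency matrix of the eccentric graph $G^{e}$, and then to translate irreducibility of this symmetric matrix into connectivity of the graph on which that support lives. First I would record that $\epsilon(G)$ is symmetric: because $d(v_{i},v_{j})=d(v_{j},v_{i})$ and $\min\{e(v_{i}),e(v_{j})\}=\min\{e(v_{j}),e(v_{i})\}$, both the defining equality and the value it assigns are unchanged when $i$ and $j$ are swapped, so $(\epsilon(G))_{ij}=(\epsilon(G))_{ji}$.

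Next I would pin down which entries are non-zero. On the diagonal, $d(v_{i},v_{i})=0<e(v_{i})=\min\{e(v_{i}),e(v_{i})\}$, so every diagonal entry vanishes. Off the diagonal, $d(v_{i},v_{j})\geq 1>0$, so $(\epsilon(G))_{ij}\neq 0$ holds exactly when $d(v_{i},v_{j})=\min\{e(v_{i}),e(v_{j})\}$, which is precisely the condition defining the edge $v_{i}\sim v_{j}$ in $G^{e}$. Consequently the undirected graph on $\{v_{1},\dots,v_{p}\}$ whose edges mark the non-zero off-diagonal entries of $\epsilon(G)$ is equal, edge for edge, to $G^{e}$.

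It then remains to connect irreducibility to this graph. If $\epsilon(G)$ is reducible, there is a permutation matrix $P$ with $P^{T}\epsilon(G)P=\left(\begin{smallmatrix}A&B\\0&C\end{smallmatrix}\right)$ for non-empty square blocks $A,C$; since $P^{T}\epsilon(G)P$ is again symmetric, the block $B$ must vanish too, so after reordering $\epsilon(G)$ is block diagonal. Such a splitting partitions the vertices into two non-empty classes with no non-zero entry between them, which by the support identification means $G^{e}$ has no edge across the partition, i.e.\ $G^{e}$ is disconnected. Conversely, ordering the vertices one connected component of $G^{e}$ at a time puts $\epsilon(G)$ into block diagonal, hence block upper-triangular, form, so $\epsilon(G)$ is reducible when $G^{e}$ is disconnected. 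Reading these two implications contrapositively yields the stated equivalence. I expect the proof to be conceptually short; the only delicate point is reconciling the algebraic definition of reducibility with the combinatorial statement, and the key simplification is that symmetry automatically promotes the block \emph{triangular} form to a block \emph{diagonal} one, so that the vanishing lower-left block is read directly as the absence of cross-edges in $G^{e}$. Without symmetry one would instead have to argue through strong connectivity of the digraph associated with $\epsilon(G)$.
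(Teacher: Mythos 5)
Your proof is correct. Note, however, that the paper does not prove this statement at all: it is quoted as a preliminary result (Theorem \ref{irreducibilitytheorem}) from the cited reference \cite{sorgun2023two}, so there is no in-paper argument to compare against. Your argument is the standard one for results of this kind: identify the off-diagonal support of the symmetric matrix $\epsilon(G)$ with the edge set of $G^{e}$, observe that symmetry forces any block upper-triangular form $\left(\begin{smallmatrix}A&B\\0&C\end{smallmatrix}\right)$ obtained by permutation to be block diagonal, and then read block-diagonality as a vertex partition of $G^{e}$ with no crossing edges; the converse follows by ordering vertices component by component. All steps are sound, including the key point you flag — that symmetry is what lets one avoid any digraph/strong-connectivity detour — so your proposal stands as a complete, self-contained proof of the cited theorem.
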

\begin{lemma}\cite{cvetkovic1980spectra}\label{thelinegraph}
    Let $G$ be an $r-$regular, $(p,q)$ graph with an adjacency matrix $A$ and an incidence matrix $R$. Let $L(G)$ be the line graph of $G$. Then $RR^{T}=A+rI$ , $R^{T}R=B+2I$, where $B$ is the adjacency matrix of $L(G)$. Also, if $J$ is an all-one  matrix of appropriate order, then $JR=2J=R^{T}J$ and $JR^{T}=rJ=RJ.$ 
\end{lemma}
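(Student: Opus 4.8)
The plan is to verify each identity by a direct entrywise computation, exploiting the combinatorial meaning of the incidence matrix $R$. Recall that $(R)_{ij}=1$ precisely when $v_i$ is an endpoint of $e_j$, so every product of the form $RR^{T}$, $R^{T}R$, or $R$ with an all-one matrix reduces to counting incidences.

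First I would compute $RR^{T}$. Its $(i,k)$ entry equals $\sum_{j} R_{ij}R_{kj}$, which counts the edges incident with both $v_i$ and $v_k$. On the diagonal ($i=k$) this is the number of edges meeting $v_i$, namely $\deg(v_i)=r$ by $r$-regularity, contributing the term $rI$. Off the diagonal ($i\neq k$) the count is $1$ exactly when $v_i\sim v_k$ and $0$ otherwise, which is precisely $(A)_{ik}$. Hence $RR^{T}=A+rI$.

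Next I would compute $R^{T}R$, whose $(j,l)$ entry $\sum_i R_{ij}R_{il}$ counts the vertices incident with both $e_j$ and $e_l$. For $j=l$ this is the number of endpoints of $e_j$, which is $2$, giving the term $2I$. For $j\neq l$, since $G$ is simple two distinct edges share at most one vertex, so the count is $1$ when $e_j$ and $e_l$ have a common endpoint and $0$ otherwise; by the definition of the line graph this is exactly $(B)_{jl}$. Thus $R^{T}R=B+2I$.

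Finally, the four relations involving $J$ follow from the row and column sums of $R$, with $J$ taken of the appropriate order in each case. Each column of $R$ sums to $2$ because every edge has two endpoints, which yields $JR=2J$ and $R^{T}J=2J$; each row of $R$ sums to $\deg(v_i)=r$ by regularity, which yields $RJ=rJ$ and $JR^{T}=rJ$. The only points requiring care are keeping track of the orders of the various all-one matrices and invoking the simplicity of $G$ to guarantee that distinct edges meet in at most one vertex; beyond that the argument is routine, so I do not anticipate a genuine obstacle.
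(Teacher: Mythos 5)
Your proof is correct: the entrywise counting argument (diagonal entries of $RR^{T}$ give degrees, off-diagonal entries detect adjacency; diagonal entries of $R^{T}R$ give the two endpoints of each edge, off-diagonal entries detect edge-adjacency; the $J$ identities are just row and column sums of $R$), together with your explicit appeal to simplicity so that two vertices share at most one edge and two edges share at most one vertex, is exactly the standard proof. Note that the paper itself offers no proof of this lemma---it is quoted from the reference \cite{cvetkovic1980spectra}---so your argument simply supplies, correctly, the classical verification found there.
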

\begin{lemma}\cite{cvetkovic1980spectra}\label{Beigenvalue}
    Let $G$ be an $r-$regular, $(p,q)$ graph with $spec(G)=\{r,\lambda_{2},\ldots, \lambda_{p}\}$. Then 
                   \begin{equation*}
                       spec(L(G))=\begin{pmatrix}
                           2r-2 & \lambda_{2}+r-2 & \cdots&\lambda_{p}+r-2& -2\\
                           1 & 1 &\cdots& 1 & q-p
                       \end{pmatrix}.
                   \end{equation*}
 Also, $Z$ is an eigenvector corresponding to the eigenvalue $-2$ if and only if $RZ=0$.
\end{lemma}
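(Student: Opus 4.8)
The plan is to exploit the two identities supplied by Lemma \ref{thelinegraph}, namely $RR^{T}=A+rI$ and $R^{T}R=B+2I$ (with $B=A(L(G))$), together with the classical fact that $RR^{T}$ (of order $p$) and $R^{T}R$ (of order $q$) share the same nonzero eigenvalues with the same multiplicities. Since $G$ is $r$-regular its largest $A$-eigenvalue is $\lambda_{1}=r$, so the spectrum of $RR^{T}=A+rI$ is $\{2r,\lambda_{2}+r,\ldots,\lambda_{p}+r\}$. Shifting the eigenvalues of $R^{T}R=B+2I$ down by $2$ then transfers this information to $B$, while the difference in sizes $q-p$ accounts for the surplus eigenvalue $-2$.

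Concretely, I would first set up the eigenvector transfer. If $Ax=\lambda x$ with $x\neq 0$, then $RR^{T}x=(\lambda+r)x$, and applying $R^{T}$ gives $R^{T}R\,(R^{T}x)=(\lambda+r)(R^{T}x)$; hence whenever $R^{T}x\neq 0$ the vector $R^{T}x$ is an eigenvector of $R^{T}R=B+2I$, so that $B(R^{T}x)=(\lambda+r-2)(R^{T}x)$. The only way $R^{T}x=0$ can occur is if $RR^{T}x=0$, i.e.\ $\lambda=-r$; thus every $A$-eigenvalue $\lambda\neq -r$ contributes an eigenvalue $\lambda+r-2$ of $B$. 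Taking $\lambda=\lambda_{1}=r$ yields $2r-2$, and the remaining $\lambda_{i}$ yield the entries $\lambda_{i}+r-2$ listed in the statement.

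Next I would pin down the multiplicity of $-2$, which simultaneously establishes the final assertion. The eigenvalue $-2$ of $B$ corresponds exactly to $\ker(B+2I)=\ker(R^{T}R)$, and since $Z^{T}R^{T}RZ=\lVert RZ\rVert^{2}$, the relation $BZ=-2Z$ holds if and only if $RZ=0$. The dimension of this kernel is $q-\operatorname{rank}(R)=q-\operatorname{rank}(RR^{T})=q-(p-k)$, where $k$ is the multiplicity of $-r$ as an $A$-eigenvalue; hence $-2$ occurs with multiplicity $q-p+k$. Observing that each of the $k$ eigenvalues $\lambda_{i}=-r$ also produces the value $\lambda_{i}+r-2=-2$ in the generic list shows the two descriptions agree, and the count $1+(p-1)+(q-p)=q$ confirms that all $q$ eigenvalues of the $q\times q$ matrix $B$ have been accounted for.

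The only delicate point is the bookkeeping of multiplicities: I must verify that $x\mapsto R^{T}x$ is injective on the span of the eigenspaces with $\lambda\neq -r$, so that no multiplicity is lost in the transfer, and that the surplus $q-p$ eigenvalues of $R^{T}R$ are genuinely zero rather than positive. Both follow once $\ker R^{T}$ is identified with the $(-r)$-eigenspace of $A$ and $\operatorname{rank}(R)=\operatorname{rank}(RR^{T})$ is invoked; with these rank identities in hand the remainder is routine.
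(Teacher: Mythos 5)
Your proof is correct. The paper offers no proof of this lemma at all---it is quoted directly from \cite{cvetkovic1980spectra}---and your argument via $RR^{T}=A+rI$, $R^{T}R=B+2I$, the equality of the nonzero spectra of $RR^{T}$ and $R^{T}R$, and the kernel count $\dim\ker(R^{T}R)=q-\operatorname{rank}(R)$ is exactly the classical argument of that reference; your explicit treatment of the case $\lambda=-r$ (where the listed value $\lambda+r-2$ coincides with $-2$ and the multiplicity of $-2$ becomes $q-p+k$) is, if anything, more careful than the usual textbook presentation.
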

\begin{lemma}\cite{indulal2008equienergetic}\label{cubicnoncospectral} 
    For every $t\geq 3$, there exists a pair of non-cospectral cubic graphs on $2t$ vertices. 
\end{lemma}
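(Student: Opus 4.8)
The plan is to separate the two graphs by a single spectral invariant, namely the number of triangles, which is encoded in $\mathrm{tr}(A^{3})$. Recall that for any graph $G$ with adjacency matrix $A$ and eigenvalues $\lambda_{1},\dots,\lambda_{n}$ one has $\sum_{i}\lambda_{i}^{3}=\mathrm{tr}(A^{3})=6\,t_{3}(G)$, where $t_{3}(G)$ is the number of triangles of $G$ (each triangle accounts for $6$ closed walks of length $3$). In particular $t_{3}(G)$ is a function of $\mathrm{spec}(G)$, so two graphs with different triangle counts cannot be cospectral. Hence it suffices, for each fixed $t\ge 3$, to produce two cubic graphs on $2t$ vertices whose triangle counts differ.

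For the triangle-free member I would take $G_{1}$ to be the M\"obius ladder $M_{t}$ on $2t$ vertices, that is, the cycle $C_{2t}$ together with all ``diameter'' edges $\{i,i+t\}$. It is $3$-regular on $2t$ vertices, and a short case check on its adjacency relations ($i\sim i\pm 1$ from the cycle and $i\sim i+t$ from the diameters) shows that for $t\ge 3$ no two adjacent vertices share a common neighbour; thus $t_{3}(M_{t})=0$ (indeed $M_{3}\cong K_{3,3}$).

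For the second member I would exhibit, for every $t\ge 3$, a cubic graph $G_{2}$ on $2t$ vertices containing at least one triangle; then $t_{3}(G_{2})>0=t_{3}(G_{1})$, and by the invariant above $G_{1}$ and $G_{2}$ are non-cospectral, and \emph{a fortiori} non-isomorphic. A concrete construction: start from the triangular prism (two triangles joined by a perfect matching), which is cubic on $6$ vertices with two triangles, and enlarge it by the standard order-preserving ``$H$-operation''. Choose two independent edges $e,f$ lying outside the triangles (e.g.\ two rungs of the prism), delete them, introduce two new vertices $z_{1},z_{2}$, and add the edge $z_{1}z_{2}$ together with the two edges from $z_{1}$ to the ends of $e$ and the two from $z_{2}$ to the ends of $f$. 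This keeps the graph simple and cubic, raises the order by exactly $2$, and leaves the original triangles intact; iterating it $t-3$ times yields the desired $G_{2}$ on $2t$ vertices.

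The argument is elementary; the only point needing care, and thus the main (minor) obstacle, is guaranteeing the second construction uniformly, i.e.\ that a simple cubic graph carrying a triangle exists on each even order $2t\ge 6$ and that the enlarging step never creates multiple edges (which holds since the added vertices are fresh and $e,f$ are independent). A clean alternative that avoids triangle counting altogether is to separate the graphs by the multiplicity of the eigenvalue $3$: a connected cubic graph has $3$ as a simple eigenvalue, whereas a disconnected cubic graph has it with multiplicity equal to its number of components. For $t\ge 4$ one could then take $G_{1}$ connected and $G_{2}=K_{4}\cup H$ with $H$ a connected cubic graph on $2t-4\ge 4$ vertices, treating the single case $t=3$ (where no disconnected cubic graph on $6$ vertices exists) by the prism/$K_{3,3}$ pair through the triangle invariant.
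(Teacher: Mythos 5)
Your proposal is correct, but there is nothing in the paper to compare it against line by line: the paper does not prove this lemma at all, it imports it verbatim from \cite{indulal2008equienergetic}. What you have written is a self-contained replacement for that citation, and it is essentially the classical argument: the triangle count is a spectral invariant because $\sum_i\lambda_i^3=\mathrm{tr}(A^3)=6t_3(G)$, so it suffices to exhibit, on every even order $2t\geq 6$, one triangle-free cubic graph and one cubic graph containing a triangle. Your two families check out. The M\"obius ladder $M_t$ is triangle-free for $t\geq 3$ (the only way three differences from $\{\pm 1,t\}$ can sum to $0$ modulo $2t$ forces $t\leq 2$), and $M_3\cong K_{3,3}$. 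For the second family, the prism with the iterated $H$-operation stays simple, cubic, and keeps its two triangles; the one point you flagged as needing care (that two independent non-triangle edges are always available) does hold, since a family of pairwise-intersecting edges in a cubic graph is a star or a triangle and hence has at most three edges, while the number of non-triangle edges after $k$ steps is $3+3k$, and at $k=0$ the three rungs are already pairwise independent.

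One substantive caveat you should add. The paper uses this lemma together with Lemma \ref{2linegraph} and Theorems \ref{GjoinG} and \ref{GjoinK1}, whose stated spectra (e.g., the simple leading eigenvalue $4r-6$ of $L^2(G)$) implicitly assume the cubic graphs are \emph{connected}. Your primary construction is fine on this score: $M_t$ is connected, and the $H$-operation preserves connectivity because the deleted edges $uv$ and $xy$ are replaced by the paths $u\,z_1\,v$ and $x\,z_2\,y$. But your ``clean alternative'' using $G_2=K_4\cup H$ disconnected, while it does prove the lemma as literally stated, would not support the paper's applications in Section 5, so the triangle-count construction is the one to keep. (Also, a trivial slip: your $H$-operation is order-increasing, not ``order-preserving''.)
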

\begin{lemma}\cite{indulal2008equienergetic}\label{2linegraph}
    Let $G$ be a $r-regular$ ($r\geq 3$), $(p,q)$  graph with $spec(G)=\{r, \lambda_{2},\ldots, \lambda_{p}\} $. Then
    \begin{equation*}
                       spec(L^{2}(G))=\begin{pmatrix}
                           4r-6 & \lambda_{2}+3r-6 &\cdots&\lambda_{p}+3r-6& 2r-6 & -2\\
                           1 & 1 &\cdots& 1 & \frac{1}{2}p(r-2)&\frac{1}{2}pr(r-2)
                       \end{pmatrix}.
                   \end{equation*}
\end{lemma}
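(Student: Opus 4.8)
The plan is to obtain the spectrum of $L^{2}(G)=L\bigl(L(G)\bigr)$ by applying Lemma \ref{Beigenvalue} twice: first to $G$, and then to the regular graph $L(G)$. The whole argument is a careful bookkeeping of eigenvalues and their multiplicities under two successive line-graph operations, so I would begin by pinning down all the relevant parameters.

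First I would record that, since $G$ is $r$-regular with $q=\tfrac{pr}{2}$ edges, its line graph $L(G)$ has $q$ vertices and is $(2r-2)$-regular: each edge of $G$ shares an endpoint with exactly $2(r-1)$ other edges (no double counting, since $G$ is simple). Hence $L(G)$ has $q(r-1)=\tfrac{pr(r-1)}{2}$ edges. Applying Lemma \ref{Beigenvalue} to $G$ then gives
\begin{equation*}
spec(L(G))=\begin{pmatrix}
2r-2 & \lambda_{2}+r-2 & \cdots & \lambda_{p}+r-2 & -2\\
1 & 1 & \cdots & 1 & q-p
\end{pmatrix},
\end{equation*}
and here the hypothesis $r\geq 3$ guarantees $q-p=\tfrac{p(r-2)}{2}>0$, so that the eigenvalue $-2$ genuinely occurs with positive multiplicity.

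The key step is to apply Lemma \ref{Beigenvalue} a second time, now treating $L(G)$ as the base graph. This is legitimate precisely because $L(G)$ is regular, with regularity $r'=2r-2$, on $p'=q$ vertices and with $q'=q(r-1)$ edges. The lemma shifts every eigenvalue of $L(G)$ other than its top one $r'=2r-2$ by $r'-2=2r-4$, produces the new top eigenvalue $2r'-2=4r-6$, and appends a fresh eigenvalue $-2$ of multiplicity $q'-p'$. Carrying this out: the top eigenvalue becomes $4r-6$; each simple eigenvalue $\lambda_{i}+r-2$ becomes $(\lambda_{i}+r-2)+(2r-4)=\lambda_{i}+3r-6$; and the eigenvalue $-2$ of multiplicity $q-p$ becomes $-2+(2r-4)=2r-6$, retaining multiplicity $q-p$.

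The main bookkeeping obstacle, and the point where care is genuinely required, is that there are two \emph{distinct} origins for the ``small'' eigenvalues, and they must not be conflated. The value $2r-6$ arises from shifting the pre-existing $-2$ eigenvalues of $L(G)$, so its multiplicity is $q-p=\tfrac{1}{2}p(r-2)$; the value $-2$ is the newly introduced line-graph eigenvalue, whose multiplicity is $q'-p'=q(r-1)-q=q(r-2)=\tfrac{1}{2}pr(r-2)$. The condition $r\geq 3$ ensures $2r-6\neq -2$, so these two families remain separate (for $r=2$ they would collapse, and both multiplicities vanish). Substituting $q=\tfrac{pr}{2}$ to simplify both multiplicities then yields exactly the claimed spectrum, completing the proof.
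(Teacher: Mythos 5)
Your proof is correct: the paper states this lemma as a quoted preliminary result from \cite{indulal2008equienergetic} and gives no proof of its own, so there is nothing in the paper to diverge from. Your argument—applying Lemma \ref{Beigenvalue} twice, once to $G$ and once to the $(2r-2)$-regular graph $L(G)$ on $q=\tfrac{pr}{2}$ vertices with $q(r-1)$ edges, while keeping the shifted $-2$ eigenvalues (giving $2r-6$ with multiplicity $\tfrac{1}{2}p(r-2)$) separate from the newly created $-2$ eigenvalues (multiplicity $q(r-2)=\tfrac{1}{2}pr(r-2)$)—is exactly the standard derivation used in the cited source, and your bookkeeping, including the role of $r\geq 3$ in keeping $2r-6\neq -2$, checks out.
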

\begin{lemma}\cite{cvetkovic1980spectra}
    Let $G$ be a $r-regular$, ($p, q$) graph. If the 
    $spec(G)=\{ r, \lambda_{2}, \ldots, \lambda_{p}\},$
     then the spectrum of the complement of $G$ is, 
    $spec(\Bar{G})=
        \{ p-r-1, -(\lambda_{2}+1),\ldots, -(\lambda_{p}+1)\}$.
    \end{lemma}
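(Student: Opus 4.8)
The plan is to exploit the identity $A(\bar{G})=J-I-A(G)$ recorded in the introduction, together with the fact that for an $r$-regular graph the all-ones vector is an eigenvector of $A(G)$. First I would fix an orthonormal basis $\{x_{1},x_{2},\ldots,x_{p}\}$ of $\mathbb{R}^{p}$ consisting of eigenvectors of the symmetric matrix $A(G)$, where $x_{1}=\tfrac{1}{\sqrt{p}}\mathbf{1}$ is the normalized all-ones vector and $A(G)x_{i}=\lambda_{i}x_{i}$ for each $i$; here $x_{1}$ indeed corresponds to the eigenvalue $\lambda_{1}=r$, because $r$-regularity gives $A(G)\mathbf{1}=r\mathbf{1}$.

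The crucial observation is that, by orthogonality of eigenvectors of a symmetric matrix, each $x_{i}$ with $i\geq 2$ is orthogonal to $x_{1}$, hence to $\mathbf{1}$, so that $Jx_{i}=\mathbf{1}(\mathbf{1}^{T}x_{i})=0$. Applying $A(\bar{G})$ to these vectors then gives, for $i\geq 2$,
\begin{equation*}
A(\bar{G})x_{i}=(J-I-A(G))x_{i}=0-x_{i}-\lambda_{i}x_{i}=-(\lambda_{i}+1)x_{i},
\end{equation*}
so that $x_{i}$ is an eigenvector of $A(\bar{G})$ with eigenvalue $-(\lambda_{i}+1)$.

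For the remaining basis vector, using $J\mathbf{1}=p\mathbf{1}$ and $A(G)\mathbf{1}=r\mathbf{1}$ I would compute
\begin{equation*}
A(\bar{G})\mathbf{1}=(J-I-A(G))\mathbf{1}=p\mathbf{1}-\mathbf{1}-r\mathbf{1}=(p-r-1)\mathbf{1},
\end{equation*}
showing that $x_{1}$ is an eigenvector of $A(\bar{G})$ with eigenvalue $p-r-1$. Since $\{x_{1},\ldots,x_{p}\}$ is a basis of $\mathbb{R}^{p}$, these $p$ eigenvalues constitute the entire spectrum of $A(\bar{G})$, which is exactly the claimed $spec(\bar{G})=\{\,p-r-1,\,-(\lambda_{2}+1),\ldots,-(\lambda_{p}+1)\,\}$.

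As for difficulty, the argument is essentially immediate once the identity $A(\bar{G})=J-I-A(G)$ is in hand, so there is no genuine obstacle here. The only point requiring care is the justification that the non-principal eigenvectors are annihilated by $J$, which is precisely where both the symmetry of $A(G)$ (to secure an orthogonal eigenbasis) and the regularity of $G$ (to pin down $\mathbf{1}$ as the principal eigenvector) enter. Without regularity the vector $\mathbf{1}$ need not be an eigenvector of $A(G)$, and this clean term-by-term formula for the complement breaks down.
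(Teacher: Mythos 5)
Your proof is correct. Note that the paper itself gives no proof of this statement: it is quoted as a preliminary lemma from the cited reference (Cvetkovi\'{c} et al., \emph{Spectra of Graphs}), so there is no in-paper argument to compare against. What you have written is precisely the standard argument used in that source: diagonalize the symmetric matrix $A(G)$ with an orthonormal eigenbasis containing the normalized all-ones vector (available exactly because $G$ is $r$-regular), observe that $J$ annihilates every eigenvector orthogonal to $\mathbf{1}$, and apply $A(\bar{G})=J-I-A(G)$ term by term. Your closing remark correctly identifies where regularity is indispensable; one could add that the argument goes through verbatim even when $r$ has multiplicity greater than one (i.e.\ $G$ disconnected), since one may still select $\mathbf{1}/\sqrt{p}$ as the first basis vector and complete the $r$-eigenspace with vectors orthogonal to it, which then receive the eigenvalue $-(r+1)$ in the complement, consistent with the stated spectrum.
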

 \begin{theorem}\cite{mahato2023spectral}\label{spectralradiuslowerbound}
   Let $G$ be a connected, $(p,q)$ graph. Then $\rho_{\epsilon}(\epsilon(G))\geq 2\frac{W_{\epsilon}}{p}$. Moreover, equality holds if and only if $G$ is $\epsilon-$ regular.   
 \end{theorem}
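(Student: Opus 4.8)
The plan is to apply the Rayleigh quotient characterization of the largest eigenvalue of a real symmetric matrix. Since $\epsilon(G)$ is symmetric, its largest eigenvalue satisfies
\begin{equation*}
\rho_{\epsilon}(G)=\max_{x\neq 0}\frac{x^{T}\epsilon(G)\,x}{x^{T}x}.
\end{equation*}
Testing this with the all-one vector $\mathbf{1}$, for which $\mathbf{1}^{T}\mathbf{1}=p$ and $\mathbf{1}^{T}\epsilon(G)\mathbf{1}=\sum_{i,j}(\epsilon(G))_{ij}=2W_{\epsilon}$, immediately gives $\rho_{\epsilon}(G)\geq \frac{\mathbf{1}^{T}\epsilon(G)\mathbf{1}}{p}=\frac{2W_{\epsilon}}{p}$, which is the desired inequality. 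So the bound itself is a one-line consequence of evaluating the Rayleigh quotient at $\mathbf{1}$.

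For the equality characterization I would argue both directions. Recall that equality $\frac{x^{T}\epsilon(G)x}{x^{T}x}=\rho_{\epsilon}(G)$ in the Rayleigh bound holds precisely when $x$ lies in the eigenspace of $\rho_{\epsilon}(G)$; expanding $x$ in an orthonormal eigenbasis makes this transparent. Hence if equality holds in the theorem, then $\mathbf{1}$ is an eigenvector of $\epsilon(G)$ associated with $\rho_{\epsilon}(G)$, so $\epsilon(G)\mathbf{1}=\rho_{\epsilon}(G)\mathbf{1}$. This says that every row sum of $\epsilon(G)$ equals the constant $\rho_{\epsilon}(G)$, i.e. $G$ is $\epsilon$-regular.

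Conversely, suppose $G$ is $\epsilon$-regular with common row sum $c$, so that $\epsilon(G)\mathbf{1}=c\mathbf{1}$. The one point that needs care is to confirm that $c$ is actually the \emph{largest} eigenvalue $\rho_{\epsilon}(G)$ and not merely some eigenvalue. Here I would use that $\epsilon(G)$ is entrywise non-negative, since each entry is either a distance or $0$: for any non-negative matrix the spectral radius is bounded above by the maximum row sum, which equals $c$, so every eigenvalue $\mu$ obeys $|\mu|\leq c$; combined with the fact that $c$ is itself an eigenvalue, this forces $\rho_{\epsilon}(G)=c$. Substituting $\sum_{i,j}(\epsilon(G))_{ij}=pc$ into the definition of $W_{\epsilon}$ then yields $\frac{2W_{\epsilon}}{p}=c=\rho_{\epsilon}(G)$, giving equality.

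The computation is short, and the only genuinely substantive step is the backward direction of the equality case: it is the \emph{non-negativity} of $\epsilon(G)$, rather than just its symmetry, that guarantees that the constant row sum $c$ coincides with the spectral radius. Everything else reduces to the evaluation of the Rayleigh quotient at $\mathbf{1}$ and the standard equality condition for that bound.
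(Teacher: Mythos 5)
Your proof is correct, but note that the paper itself contains no proof of this statement to compare against: it is Theorem~2.10 in the Preliminaries, quoted from the cited reference \cite{mahato2023spectral}. Your argument is the standard one for bounds of this type (and almost certainly the one in the cited source): evaluate the Rayleigh quotient of the symmetric matrix $\epsilon(G)$ at the all-one vector to get $\rho_{\epsilon}(G)\geq \frac{\mathbf{1}^{T}\epsilon(G)\mathbf{1}}{p}=\frac{2W_{\epsilon}}{p}$, and characterize equality by when $\mathbf{1}$ lies in the top eigenspace. Both directions of your equality case are sound, and you correctly identified the one step that genuinely needs more than symmetry: in the backward direction, knowing only that the constant row sum $c$ is \emph{an} eigenvalue is not enough, and it is the entrywise non-negativity of $\epsilon(G)$ (giving $|\mu|\leq \lVert \epsilon(G)\rVert_{\infty}=c$ for every eigenvalue $\mu$) that forces $c=\rho_{\epsilon}(G)$. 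Nothing is missing.
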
   


\section{Irreducibility and  eccentricity spectrum of subdivision vertex join of two graphs}

In this section, we discuss about the irreducibility, $\epsilon-$ spectrum, and eccentricity Wiener index of subdivision vertex join of two graphs.

\begin{theorem} Let $G_{i}$ be a $(p_{i}, q_{i})$ graph, $i={1,2}$.  
Then $\epsilon(G_{1} \veedot G_{2})$ is irreducible, unless $G_{1}=K_{1,1}.$
\end{theorem}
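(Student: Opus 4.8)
The plan is to apply Theorem~\ref{irreducibilitytheorem}, so that it suffices to prove the eccentric graph $(G_{1}\veedot G_{2})^{e}$ is connected whenever $G_{1}\neq K_{1,1}$. Partition the vertices of $G_{1}\veedot G_{2}$ into $A=V(G_{1})$, $B=I(G_{1})$ (one subdivision vertex $w_{f}$ per edge $f$ of $G_{1}$), and $C=V(G_{2})$; I assume $p_{2}\geq 1$ and that $G_{1}$ has at least one edge, the edgeless case reducing to the join of $\overline{K_{p_{1}}}$ and $G_{2}$, which can be handled separately. First I would record the adjacencies: each $w_{f}\in B$ is adjacent only to the two endpoints of $f$ in $A$, each vertex of $A$ is adjacent to every vertex of $C$, and $C$ carries the edges of $G_{2}$; in particular no vertex of $B$ is adjacent to any vertex of $C$.

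Next I would compute the pairwise distances, all short routes passing through $C$. This gives $d(u,u')=2$ for distinct $u,u'\in A$, $d(u,x)=1$ for $u\in A$ and $x\in C$, and $d(w_{f},x)=2$ for $w_{f}\in B$ and $x\in C$; moreover $d(u,w_{f})=1$ if $u$ is an endpoint of $f$ and $d(u,w_{f})=3$ otherwise, while $d(x,x')\in\{1,2\}$. Reading off eccentricities, every $x\in C$ has $e(x)=2$, every $w_{f}\in B$ has $e(w_{f})\geq 2$ (since $d(w_{f},x)=2$), and each $u\in A$ satisfies $e(u)=3$ if some edge of $G_{1}$ misses $u$ and $e(u)=2$ otherwise.

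The connectedness of $(G_{1}\veedot G_{2})^{e}$ then rests on three observations about the relation $d(v,v')=\min\{e(v),e(v')\}$. (i) For every $w_{f}\in B$ and $x\in C$ we have $d(w_{f},x)=2=\min\{e(w_{f}),e(x)\}$, so $w_{f}\sim x$ in the eccentric graph; hence $B$ and $C$ (both nonempty) are joined completely and $B\cup C$ is connected. (ii) If $u\in A$ has $e(u)=3$, choosing an edge $f$ that misses $u$ gives $d(u,w_{f})=3=\min\{e(u),e(w_{f})\}$, so $u\sim w_{f}$ and $u$ attaches to $B\cup C$. (iii) If $u\in A$ has $e(u)=2$, then $d(u,u')=2=\min\{e(u),e(u')\}$ for every other $u'\in A$, so $u$ is adjacent in the eccentric graph to all of $A\setminus\{u\}$. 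Combining these, the eccentric graph is connected as soon as some vertex of $A$ has eccentricity $3$: that vertex bridges $A$ to $B\cup C$ by (ii), every eccentricity-$2$ vertex of $A$ reaches it through (iii), and $B\cup C$ is internally connected by (i).

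The only obstacle is therefore the degenerate situation in which no vertex of $A$ has eccentricity $3$. By the eccentricity formula this means every edge of $G_{1}$ is incident with every vertex of $A$; since a fixed edge $u_{a}u_{b}$ meets only $u_{a}$ and $u_{b}$, this forces $|A|=2$, and with $q_{1}\geq 1$ we get $G_{1}=K_{1,1}$. To finish, I would check that this case is genuinely reducible: for $G_{1}=K_{1,1}$ we have $A=\{u_{1},u_{2}\}$ with $e(u_{1})=e(u_{2})=2$, while $d(u_{i},w)=1$ and $d(u_{i},x)=1$ for the unique $w\in B$ and all $x\in C$, so in the eccentric graph $u_{i}$ is adjacent only to $u_{3-i}$. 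Thus $\{u_{1},u_{2}\}$ splits off as a component disjoint from $B\cup C$, making $(K_{1,1}\veedot G_{2})^{e}$ disconnected and $\epsilon(K_{1,1}\veedot G_{2})$ reducible, exactly the stated exception.
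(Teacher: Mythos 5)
Your proof is correct and takes essentially the same route as the paper: both invoke Theorem~\ref{irreducibilitytheorem}, compute the same eccentricities on the parts $V(G_{1})$, $I(G_{1})$, $V(G_{2})$, join $I(G_{1})$ completely to $V(G_{2})$ in the eccentric graph, and attach each vertex of $V(G_{1})$ either to a subdivision vertex of an edge missing it (eccentricity $3$) or to the remaining vertices of $V(G_{1})$ (eccentricity $2$) --- your classification by eccentricity simply replaces the paper's star/non-star case split, and your verification that $K_{1,1}\veedot G_{2}$ is genuinely reducible is a small addition the paper omits. The only caveat is your deferred edgeless case: it is harmless solely because the paper's standing convention is that all graphs are connected (so edgeless forces $G_{1}=K_{1}$, whose apex has eccentricity $1$ and is adjacent to every vertex in the eccentric graph), whereas for disconnected edgeless $G_{1}$ the statement itself can fail, e.g.\ $\overline{K_{2}}\veedot C_{5}=\overline{K_{2}}\vee C_{5}$ has every vertex of eccentricity $2$ and its eccentric graph is $K_{2}\cup C_{5}$, which is disconnected.
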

\begin{proof}

 Let $ G=G_{1}\veedot G_{2}$, $V(G)=V(G_{1})\cup I(G_{1}) \cup V(G_{2})$. The vertices in $V(G_{1})$, $I(G_{1})$, and $V(G_{2})$ are denoted by $v_{j}$, $u_{k}$, and $w_{l}$ respectively.
 If $G_{1}=K_{1}$, then clearly $G^{e}$ is connected.\\
Case 1: $G_{1}$ is not a star graph\\
In $G$, it is easy to check that,
   \begin{align*}
        e(v_{j})=3 , & \text{ for all } v_{j} \in V(G_{1}),\\
        3\leq e(u_{k}) \leq 4, & \text{ for all } u_{k} \in I(G_{1}),\\
        e(w_{l})=2,& \text{ for all } w_{l} \in V(G_{2}).
   \end{align*}
In $G^{e}$, every vertex of $I(G_{1})$ is adjacent to every vertex in $V(G_{2})$ since $d(u_{k},w_{l})=2=min\{e(u_{k}),$ $ e(w_{l})\}.$
Here  $G_{1}$ is not a star graph , so for each vertex $v_{j}$ in $V(G_{1})$, there is a vertex $u_{k}$ in $I(G_{1})$ such that $d(v_{j},u_{k})=3=min\{e(v_{j}), e(u{k})\}$. Therefore, each $v_{j}$ in $V(G_{1})$ is adjacent to  some $u_{k}$ in $I(G_{1})$. Hence $G^{e}$ is connected.\\
Case 2: $G_{1}= K_{1, p_{1}-1}, p_{1}\geq 3$. \\
Let $v_{1}$ be the unique vertex in $V(G_{1})$ such that $d_{eg}(v_{1})=p_{1}-1$. In $G$,  it is easy to see that $e(v_{j})=3,$ for all $v_{j} \in V(G_{1}) \setminus  \{v_{1}\}$ and $e(v_{1})=2$. Also $e(u_{k})=3$ for all $u_{k} \in I(G_{1})$ and $e(w_{l})=2$ , for each $w_{l}$ in $V(G_{2})$.
Since $d(u_{k},w_{l})=2=min\{e(u_{k}),e(w_{l})\}$, in $G^{e}$, every vertex of $V(G_{2})$ is adjacent to every vertex in $I(G_{1})$.
Also, for each vertex $v_{j}$, $j\neq 1$, there exist $p_{1}-2$ vertices in $I(G_{1})$ such that $d(v_{j}, u_{k})=3=min\{e(v_{j}),e(u_{k})\}$.  Moreover, for $j\neq 1$, $d(v_{1}, v_{j})=2=min\{e(v_{1}, e(v_{j})\}$. 
Hence there is a path joining vertices of $V(G_{1})$ and $I(G_{1})$.  
Therefore, $G^{e}$ is a connected graph.
By  Theorem \ref{irreducibilitytheorem}, $\epsilon(G_{1}\veedot G_{2})$ is irreducible.


\end{proof}

\begin{theorem}\label{subdivisionvertex}
Let $G_{j}$, be $r_{j}-$regular ($r_{1}\geq 2$), $(p_{j}, q_{j})$ graph, $j=1, 2$. Let $\{r_{1},\lambda_{2},\ldots, \lambda_{p_{1}}\}$ and  $\{r_{2},\beta_{2},\ldots, \beta_{p_{2}}\}$ be the $A$-eigenvalues of $G_{1}$ and $G_{2}$ respectively. Then the $\epsilon$-eigenvalues of $\epsilon(G_{1}\veedot G_{2})$ consists of
\begin{enumerate}
    \item $4$,   with multiplicity $q_{1}-p_{1}$.
    \item $-2(1+\beta_{k})$,   $k=2, 3,\ldots, p_{2}$.
    \item $-3t_{1}+4-4(\lambda_{i}+r_{1})$,  $i=2,3,\ldots,  p_{1}$, where $t_{1} =\frac{2(1-(\lambda_{i}+r_{1}))+\sqrt{4(\lambda_{i}+r_{1})^{2}+(\lambda_{i}+r_{1})+4}}{3}$.
    \item $-3t_{2}+4-4(\lambda_{i}+r_{1})$,  $i=2,3,\ldots, p_{1}$, where $t_{2} =\frac{2(1-(\lambda+r_{1})-\sqrt{4(\lambda_{i}+r_{1})^{2}+(\lambda_{i}+r_{1})+4}}{3}$.
    \item  3 eigenvalues of the equitable quotient matrix of $\epsilon(G_{1}\veedot G_{2})$, \begin{center}
        
    $\begin{pmatrix}
               0 & 3q_{1}-3r_{1} &0  \\
               3p_{1}-6 & 4q_{1}-8r_{1}+4 &2p_{2} \\
               0& 2q_{1}&2(p_{2}-r_{2}-1)
             \end{pmatrix}$.\\
\end{center}
\end{enumerate}
\end{theorem}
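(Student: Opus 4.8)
The plan is to first pin down the metric structure of $G := G_{1}\veedot G_{2}$ and then diagonalize $\epsilon(G)$ block-wise using the spectral data of $A_{1}:=A(G_{1})$, $A_{2}:=A(G_{2})$ and the incidence relations of Lemma \ref{thelinegraph}. Since $G_{1}$ is $r_{1}$-regular with $r_{1}\geq 2$ it is not a star, and provided $G_{1}$ has two non-incident edges (automatic unless $G_{1}=K_{3}$) the eccentricity count of the previous theorem gives $e(v_{j})=3$ on $V(G_{1})$, $e(u_{k})=4$ on $I(G_{1})$ and $e(w_{l})=2$ on $V(G_{2})$. Reading off which distances equal the relevant minimum eccentricity, I would record $\epsilon(G)$, relative to the partition $\bigl(V(G_{1}),I(G_{1}),V(G_{2})\bigr)$, as
\[
\epsilon(G)=\begin{pmatrix} 0 & 3(J-R) & 0 \\ 3(J-R)^{T} & 4(J-I-B) & 2J \\ 0 & 2J & 2(J-I-A_{2})\end{pmatrix},
\]
where $R=R(G_{1})$ and $B=A(L(G_{1}))$. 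The $(1,2)$-block encodes that $v_{j}$ and $u_{k}$ contribute $d=3=\min\{3,4\}$ exactly when $v_{j}$ is not incident to the edge of $u_{k}$; the $(2,2)$-block that two subdivision vertices contribute $d=4$ exactly for non-adjacent edges; the $(2,3)$-block that $d(u_{k},w_{l})=2=\min\{4,2\}$ always; and the $(3,3)$-block that two vertices of $G_{2}$ contribute $d=2=\min\{2,2\}$ exactly when non-adjacent.

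Next I would produce a full orthogonal eigenbasis from four families, using $RR^{T}=A_{1}+r_{1}I$, $R^{T}R=B+2I$, $R\mathbf{1}=r_{1}\mathbf{1}$, $\mathbf{1}^{T}R=2\mathbf{1}^{T}$ (Lemma \ref{thelinegraph}) and the fact that $\ker R$ is precisely the $(-2)$-eigenspace of $B$ (Lemma \ref{Beigenvalue}). First, for $Z\in\ker R$ one checks $\epsilon(G)(0,Z,0)^{T}=4(0,Z,0)^{T}$, giving the eigenvalue $4$ of item 1. Second, for an eigenvector $z_{k}\perp\mathbf{1}$ of $A_{2}$ with $A_{2}z_{k}=\beta_{k}z_{k}$, the vector $(0,0,z_{k})$ has eigenvalue $-2(1+\beta_{k})$, giving item 2. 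Third, for an eigenvector $x_{i}\perp\mathbf{1}$ of $A_{1}$ with $A_{1}x_{i}=\lambda_{i}x_{i}$, the two-dimensional space $\operatorname{span}\{(x_{i},0,0),(0,R^{T}x_{i},0)\}$ is $\epsilon(G)$-invariant, the $V(G_{2})$-block decoupling because $\mathbf{1}^{T}R^{T}x_{i}=r_{1}\mathbf{1}^{T}x_{i}=0$; using $RR^{T}x_{i}=(\lambda_{i}+r_{1})x_{i}$ and $BR^{T}x_{i}=(\lambda_{i}+r_{1}-2)R^{T}x_{i}$ the restriction is $\left(\begin{smallmatrix} 0 & -3(\lambda_{i}+r_{1}) \\ -3 & -4(\lambda_{i}+r_{1}-1)\end{smallmatrix}\right)$, whose characteristic equation $\theta^{2}+4(\lambda_{i}+r_{1}-1)\theta-9(\lambda_{i}+r_{1})=0$ yields, after the substitution $\theta=-3t+4-4(\lambda_{i}+r_{1})$, exactly items 3 and 4. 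Finally, the partition is equitable (by regularity of $G_{1},G_{2}$), so the vectors constant on each block span an invariant subspace whose quotient matrix has the constant block row-sums $3(q_{1}-r_{1})$, $3(p_{1}-2)$, $4(q_{1}-2r_{1}+1)$, $2p_{2}$, $2q_{1}$, $2(p_{2}-r_{2}-1)$, i.e. the matrix $F$ of item 5, contributing the remaining $3$ eigenvalues.

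To finish I would verify mutual orthogonality and spanning of $\mathbb{R}^{p_{1}}\oplus\mathbb{R}^{q_{1}}\oplus\mathbb{R}^{p_{2}}$: the $V(G_{1})$-coordinates are exhausted by $\mathbf{1}$ and the $x_{i}$, the $V(G_{2})$-coordinates by $\mathbf{1}$ and the $z_{k}$, and in the $I(G_{1})$-coordinate the orthogonal decomposition $\operatorname{Im}R^{T}\oplus\ker R=\mathbb{R}^{q_{1}}$, together with $\mathbf{1},R^{T}x_{i}\in\operatorname{Im}R^{T}$ and $(R^{T}x_{i})^{T}(R^{T}x_{i'})=(\lambda_{i'}+r_{1})x_{i}^{T}x_{i'}$, gives both. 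A dimension count then shows the listed eigenvalues total $p_{1}+q_{1}+p_{2}$. I expect the most delicate point to be the bookkeeping when $G_{1}$ is bipartite: then $\lambda_{p_{1}}=-r_{1}$, so $R^{T}x_{p_{1}}=0$, the third family degenerates for $i=p_{1}$, and $\dim\ker R$ increases by one. I would check that the formulas compensate exactly — for $\lambda_{p_{1}}+r_{1}=0$ the $2\times 2$ block has roots $0$ and $4$, the root $0$ being the genuine eigenvector $(x_{p_{1}},0,0)$ and the root $4$ accounting for the extra dimension of $\ker R$ that item 1 does not count — so that items 1, 3, 4 still record the correct multiplicities. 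This uniform compensation, rather than any single computation, is the main thing to get right.
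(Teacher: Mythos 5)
Your proposal is correct and follows essentially the same route as the paper: the same block form of $\epsilon(G_{1}\veedot G_{2})$, the same three eigenvector families (the kernel of $R$, the $A_{2}$-eigenvectors, and the pairs built from $A_{1}$-eigenvectors via $R^{T}$), and the same equitable quotient matrix for the last three eigenvalues; your $2\times 2$ restriction matrix with characteristic equation $\theta^{2}+4(\lambda_{i}+r_{1}-1)\theta-9(\lambda_{i}+r_{1})=0$ is exactly the paper's ansatz $\phi=(tX,\,R^{T}X,\,0)^{T}$ with the parameter $t$ eliminated, and your substitution $\theta=-3t+4-4(\lambda_{i}+r_{1})$ recovers items 3 and 4 verbatim. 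The points where you go beyond the paper --- justifying the block form from the eccentricities (including the $K_{3}$ caveat, where the $(2,2)$ block vanishes anyway since $J-I-B=0$), the orthogonality and dimension count, and the multiplicity compensation when $G_{1}$ is bipartite (where $\ker R$ gains a dimension and the degenerate pair contributes the roots $0$ and $4$) --- are all sound and make explicit what the published proof leaves unaddressed.
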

\begin{proof}

     Let $G_{1}$ and $G_{2}$ be regular graphs with regularity $r_{1}(\geq 2)$ and $r_{2}$ respectively. Then, by a appropriate labelling of the vertices of $G_{1}\veedot G_{2}$, its eccentricity matrix can be written as \begin{equation*}
        \epsilon(G_{1}\veedot G_{2}) =\begin{pmatrix}
         0 & 3(J-R)& 0\\
        3(J-R^{T}) & 4(J-I-B)& 2J\\
        0& 2J& 2(J-I-A_{2})
    \end{pmatrix},
                            \end{equation*}
where $R$, $B$ and $A_{2}$ represents the incidence matrix of $G_{1}$, adjacency matrix of $L(G_{1})$ and adjacency matrix of $G_{2}$  respectively.\\
Now, let $Z$ be an eigenvector of $B$ corresponding to the  eigenvalue $-2$ (with multiplicity $q_{1}-p_{1}).$ By Lemma \ref{Beigenvalue} $RZ=0.$ 
Consider $ U=\begin{pmatrix}
        0\\
        Z\\
        0
    \end{pmatrix},$ then we have
                \begin{equation*}
               \epsilon(G_{1}\veedot G_{2}) U=   \begin{pmatrix}
                                           0 & 3(J-R)& 0\\
                                           3(J-R^{T}) & 4(J-I-B)& 2J\\
                                           0& 2J& 2(J-I-A_{2})
                               \end{pmatrix}
                                             \begin{pmatrix}
                                                   0\\
                                                   Z\\
                                                   0
                                            \end{pmatrix}\\
                                   =\begin{pmatrix}
                                       3(J-R)Z\\
                                       4(J-I-B)Z\\
                                       2JZ
                                   \end{pmatrix} \\ 
                                   =4\begin{pmatrix}
                                     0\\
                                     Z\\
                                     0
                                   \end{pmatrix}
                                   =4U.
        \end{equation*}
   Thus, $4$ is an eigenvalue  of $\epsilon(G_{1}\veedot G_{2})$ with multiplicity $q_{1}-p_{1}$.\\
   Let $W$ be an eigenvector of $A_{2}$ corresponding to the eigenvalue $\beta_{k}$ ($k=2,\ldots, p_{2}$). 
   Let $V=\begin{pmatrix}
                     0\\
                     0\\
                     W
                     \end{pmatrix},
                     $ 
                     then we have
   \begin{align*}
                     \epsilon(G_{1}\veedot G_{2})
                     V
                     =\begin{pmatrix}
                                           0 & 3(J-R)& 0\\
                                           3(J-R^{T}) & 4(J-I-B)& 2J\\
                                           0& 2J& 2(J-I-A_{2})
                     \end{pmatrix}
                                             \begin{pmatrix}
                                                   0\\
                                                   0\\
                                                   W
                                            \end{pmatrix}
                                    =-2(1+\beta_{i})\begin{pmatrix}
                                     0\\
                                     0\\
                                     W
                                   \end{pmatrix}
                                   =-2(1+\beta_{i}) V.
        \end{align*}
  Thus,    $-2(1+\beta_{i})$ ( for $i=2, 3,\ldots, p_{2}$)is an eigenvalue of $\epsilon(G_{1}\veedot G_{2})$. \\
 %
  Let $X$ be an eigenvector of $A_{1}$ corresponding to the eigenvalue $\lambda_{i}$ ( for $i=2, \ldots, p_{1}$), where $A_{1}$ is the adjacency matrix of $G_{1}$. Using   Lemma \ref{thelinegraph}, we get $R^{T}X$ is an eigenvector of $B$ corresponding to the eigenvalue $\lambda_{i}+r_{1}-2.$ Thus the vectors $X$ and       $R^{T}X$ are orthogonal to the vector $J_{{p_{1}}\times 1}.$\\
  Now, consider the vector $\phi=\begin{pmatrix}
                                        tX\\
                                          R^{T}X\\
                                           0   
                                    \end{pmatrix}.$       
  Next, we determine under what condition $\phi$ is an eigenvector of   $ \epsilon(G_{1} \veedot G_{2}).$
  If $\mu$ is an eigenvalue of $\epsilon(G_{1}\veedot G_{2})$ such that 
                              $\epsilon(G_{1}\veedot G_{2})\phi=\mu \phi$, then
\begin{align*}
                                        \begin{pmatrix}
                                           0 & 3(J-R)& 0\\
                                           3(J-R^{T}) & 4(J-I-B)& 2J\\
                                           0& 2J& 2(J-I-A_{2})
                                          \end{pmatrix}
                                          \begin{pmatrix}
                                        tX\\
                                          R^{T}X\\
                                           0  
                                           \end{pmatrix}
                                           &= \mu \begin{pmatrix}
                                        tX\\
                                          R^{T}X\\
                                           0  
                                           \end{pmatrix}\\
                                           \begin{pmatrix}
                                               -3(\lambda_{i}+r_{1})X\\
                                            (-3t+4-4(\lambda_{i}+r_{1}))R^{T}X\\
                                            0
                                           \end{pmatrix}
                                           &=\mu \begin{pmatrix}
                                              tX\\
                                          R^{T}X\\
                                           0  
                                           \end{pmatrix}
\end{align*}

      Therefore,                      \begin{align*}
                                  -3(\lambda_{i}+r_{1})&=\mu t \\
                                  -3t+4-4(\lambda_{i}+r_{1})&= \mu\\
                             \end{align*}
    From this we  will get
                             \begin{equation*}
                             3t^{2}+4t((\lambda_{i}+r_{1})-1)-3(\lambda_{i}+r_{1})=0
                                            \end{equation*}
     So $t$ has two values \begin{align*}
                             t_{1} &=\frac{2(1-(\lambda_{i}+r_{1}))+\sqrt{4(\lambda_{i}+r_{1})^{2}+(\lambda_{i}+r_{1})+4}}{3}.\\
                             t_{2} &=\frac{2(1-(\lambda+r_{1})-\sqrt{4(\lambda_{i}+r_{1})^{2}+(\lambda_{i}+r_{1})+4}}{3}.
                           \end{align*}  
 Therefore, corresponding to each eigenvalue $\lambda_{i}$ ( for $i=2,\ldots, p_{1}$) of $G_{1}$, we get $2$ eigenvalues for $\epsilon(G_{1}\veedot G_{2}),$ as $-3t_{1}+4-4(\lambda_{i}+r_{1})$ and   $-3t_{2}+4-4(\lambda_{i}+r_{1})$. Thus, we get $2(p_{1}-1)$ eigenvalues of $\epsilon(G_{1}\veedot G_{2}).$\\       
 The remaining $3$ eigenvalues of $\epsilon(G_{1}\veedot G_{2})$ are eigenvalues of its equitable quotient matrix, 
                   \begin{center}
                   $F=\begin{pmatrix}
               0 & 3q_{1}-3r_{1} &0  \\
               3p_{1}-6 & 4q_{1}-8r_{1}+4 &2p_{2} \\
               0& 2q_{1}&2(p_{2}-r_{2}-1)
             \end{pmatrix}$.\\
\end{center}
This completes the proof.
\end{proof}
        
        
    
As an application we construct new pairs of $\epsilon-$cospectral graphs in the following theorem.
\begin{corollary}
   Let $G_{1}$ and $G_{2}$ be two regular cospectral graphs and $H$ be any arbitrary regular graph. 
   Then\begin{enumerate}
       \item $G_{1}\veedot H$ and $G_{2}\veedot H$  are 
       $\epsilon-$cospectral. 
   \item $H \veedot G_{1}$ and $H \veedot G_{2}$ are $\epsilon-$ cospectral.
     \end{enumerate}
\end{corollary}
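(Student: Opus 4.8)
The plan is to apply Theorem~\ref{subdivisionvertex} directly, since it gives a complete description of the $\epsilon$-spectrum of a subdivision-vertex join $G\veedot H$ of two regular graphs purely in terms of the $A$-eigenvalues of the two factors together with the combinatorial parameters $p,q,r$ of each. The whole argument will hinge on the observation that cospectral regular graphs agree on every datum that feeds into that theorem.

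First I would handle part~(1). Write $spec(G_1)=\{r,\lambda_2,\ldots,\lambda_{p}\}$; since $G_1$ and $G_2$ are cospectral they have the same order $p_1=p_2=:p$, the same spectrum, and in particular (being regular) the same regularity $r$, hence the same edge count $q$ via $q=\tfrac{1}{2}pr$. Now inspect the five families of eigenvalues listed in Theorem~\ref{subdivisionvertex} for $G_1\veedot H$ versus $G_2\veedot H$. The multiplicity $q-p$ of the eigenvalue $4$ in item~(1) depends only on $p$ and $q$, which are common to $G_1$ and $G_2$. Items~(3) and~(4) are functions solely of the eigenvalues $\lambda_i$ of the first factor together with $r_1$; since the two multisets $\{\lambda_i\}$ coincide, these contribute identical eigenvalues with identical multiplicities. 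Item~(2) involves only the spectrum of $H$, which is fixed. Finally, the $3\times3$ quotient matrix $F$ in item~(5) has entries built only from $p_1,q_1,r_1$ (first factor) and $p_2,q_2,r_2$ (which here is $H$); all of these agree between the two joins. Hence every one of the five contributions matches, so $\epsilon(G_1\veedot H)$ and $\epsilon(G_2\veedot H)$ have the same spectrum; being non-isomorphic of the same order, they are $\epsilon$-cospectral.

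For part~(2) the roles are reversed: $H$ is now the first (subdivided) factor and $G_1,G_2$ are the second. I would run through the same five items, but tracking which factor each datum comes from. Items~(1),~(3),~(4) and the bulk of the quotient matrix depend only on the \emph{first} factor $H$, so they are automatically identical. The only place the second factor enters is item~(2), namely the eigenvalues $-2(1+\beta_k)$ coming from $spec(G_i)$, and the single entry $2(p_2-r_2-1)$ in the bottom-right corner of $F$ (together with $2p_2$ and $2q_2$). Since $G_1,G_2$ are cospectral regular graphs they share the same $\{\beta_k\}$, the same $p_2$, the same $r_2$, and the same $q_2$; thus item~(2) and the full matrix $F$ again coincide, giving $\epsilon$-cospectrality.

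The argument is essentially a bookkeeping check, so the only genuine subtlety — the step I would be most careful about — is confirming that cospectrality of \emph{regular} graphs really does force agreement of all the auxiliary parameters $p,q,r$ that appear in Theorem~\ref{subdivisionvertex}. This follows because $r$ is recovered as the largest $A$-eigenvalue of a connected regular graph (and the spectrum determines the order $p$, hence $q=pr/2$), so cospectral regular graphs share $p$, $q$, and $r$. One should also note the hypothesis $r_1\geq2$ in Theorem~\ref{subdivisionvertex}: for part~(1) this requires the subdivided factor $G_i$ to have regularity at least $2$, and for part~(2) it requires $H$ to have regularity at least $2$; I would flag that these regularity assumptions are inherited from the theorem being invoked.
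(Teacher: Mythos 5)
Your proposal is correct and follows exactly the route the paper intends: the corollary is stated as an immediate consequence of Theorem~\ref{subdivisionvertex}, and your bookkeeping check that cospectral regular graphs share $p$, $r$ (the largest $A$-eigenvalue), and hence $q=pr/2$, so that every one of the five eigenvalue families in that theorem coincides, is precisely the omitted verification. Your remark that the hypothesis $r_1\geq 2$ is silently inherited (so the subdivided factor must have regularity at least $2$) is a fair and accurate caveat about the corollary as stated.
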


\begin{corollary}\label{wienervertexjoin}
    Let $G_{j}$ be an $r_{j}-$regular $(r_{1}\geq 2)$, $(p_{j}, q_{j})$ graph, $j=1, 2$. Then the $eccentricity$ $Wiener$ $index$ of $\epsilon(G_{1}\veedot G_{2})$ is given by,\begin{equation*}
        W_{\epsilon}(G_{1}\veedot G_{2})=q_{1}(3p_{1}-4r_{1}+2q_{1}-1)+p_{2}(p_{2}+2q_{1}-r_{2}-1)-\frac{3}{2}r_{1}p_{1}.
    \end{equation*}

\end{corollary}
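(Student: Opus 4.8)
The plan is to evaluate $W_\epsilon$ straight from the definition $W_\epsilon(G)=\tfrac12\sum_{i,j}(\epsilon(G))_{ij}$, that is, as half the total of all entries of the eccentricity matrix already written in block form in the proof of Theorem~\ref{subdivisionvertex}. Denoting by $\mathbf 1$ an all-ones column vector of the appropriate length, the sum of all entries of a matrix $M$ is $\mathbf 1^{T} M \mathbf 1$, and the total over $\epsilon(G_1\veedot G_2)$ breaks up into the nine block contributions. Because the matrix is symmetric, the two transpose-pairs of off-diagonal blocks $(1,2)/(2,1)$ and $(2,3)/(3,2)$ contribute equally while the $(1,3)/(3,1)$ blocks vanish, so I only need the entry-sums of $3(J-R)$ (counted twice), $4(J-I-B)$, $2J$ (counted twice) and $2(J-I-A_2)$.

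The required entry-sums are all elementary. Every all-ones block $J$ contributes the product of its two dimensions, so $\mathbf 1^{T} J\mathbf 1$ equals $p_1q_1$ in the $(1,2)$ block, $q_1^2$ in the $(2,2)$ block, $q_1p_2$ in the $(2,3)$ block, and $p_2^2$ in the $(3,3)$ block. For the incidence matrix $R$ of $G_1$ each column sums to $2$ (an edge has two ends), whence $\mathbf 1^{T} R\mathbf 1=2q_1$. For the adjacency matrix $B$ of $L(G_1)$ I use that, since $G_1$ is $r_1$-regular, $L(G_1)$ is $2(r_1-1)$-regular on $q_1$ vertices, so $\mathbf 1^{T} B\mathbf 1=2q_1(r_1-1)$; equivalently, from Lemma~\ref{thelinegraph} one has $R\mathbf 1=r_1\mathbf 1$ and $R^{T}R=B+2I$, giving $\mathbf 1^{T} B\mathbf 1=r_1^2p_1-2q_1=p_1r_1(r_1-1)$. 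Finally $\mathbf 1^{T} A_2\mathbf 1=2q_2=p_2r_2$ since $G_2$ is $r_2$-regular, and $\mathbf 1^{T} I\mathbf 1$ is just the order of the identity block.

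Substituting these values and halving gives, after collecting terms,
\begin{equation*}
W_\epsilon(G_1\veedot G_2)=3p_1q_1+2q_1^2-4q_1-4q_1r_1+2q_1p_2+p_2^2-p_2-p_2r_2.
\end{equation*}
The last step is purely cosmetic: grouping the $q_1$- and $p_2$-terms and invoking the regularity identity $p_1r_1=2q_1$ to rewrite $-4q_1=-q_1-\tfrac32 p_1r_1$ turns this into the claimed expression $q_1(3p_1-4r_1+2q_1-1)+p_2(p_2+2q_1-r_2-1)-\tfrac32 r_1p_1$. I expect the only genuine obstacles to be bookkeeping: keeping the three block sizes $p_1,q_1,p_2$ straight, and correctly computing $\mathbf 1^{T} B\mathbf 1$ for the line-graph block; once the regularity of $L(G_1)$ (or Lemma~\ref{thelinegraph}) is invoked, the remainder is a routine summation, and no appeal to $\epsilon$-regularity or to Theorem~\ref{spectralradiuslowerbound} is needed, since $G_1\veedot G_2$ need not be $\epsilon$-regular.
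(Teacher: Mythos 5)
Your proof is correct, and your arithmetic checks out: halving the total entry-sum of the block matrix gives $3p_1q_1+2q_1^2-4q_1-4q_1r_1+2q_1p_2+p_2^2-p_2-p_2r_2$, which indeed equals the stated expression once $p_1r_1=2q_1$ is used. The paper itself states Corollary \ref{wienervertexjoin} without any proof, and your route --- summing the entries of the block form of $\epsilon(G_1\veedot G_2)$ established in the proof of Theorem \ref{subdivisionvertex}, using the column sums of $R$ and the $2(r_1-1)$-regularity of $L(G_1)$ --- is precisely the natural (and evidently intended) derivation, so there is nothing to compare it against beyond confirming it works.
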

\begin{corollary}
    Let $G_{j}$ be an $r_{j}-$regular $(r_{1} \geq 2)$, $(p_{j}, q_{j})$ graph, $j= 1, 2$. Then\begin{equation*}
        \rho_{\epsilon}(G_{1}\veedot G_{2}) > \frac{2q_{1}(3p_{1}-4r_{1}+2q_{1}-1)+2p_{2}(p_{2}+2q_{1}-r_{2}-1)-3p_{1}r_{1}}{p_{1}+p_{2}+q_{1}}.
    \end{equation*}
\end{corollary}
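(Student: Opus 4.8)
The plan is to recognize that the right-hand side is nothing but $2W_{\epsilon}(G_{1}\veedot G_{2})/p$, where $p=p_{1}+q_{1}+p_{2}$ is the number of vertices of $G_{1}\veedot G_{2}$ (the $p_{1}$ vertices of $G_{1}$, the $q_{1}$ subdivision vertices of $I(G_{1})$, and the $p_{2}$ vertices of $G_{2}$), and then to invoke Theorem \ref{spectralradiuslowerbound}. First I would multiply the expression for $W_{\epsilon}(G_{1}\veedot G_{2})$ from Corollary \ref{wienervertexjoin} by $2$ and divide by $p$; this reproduces the stated fraction term by term, so the computation is a routine substitution. Theorem \ref{spectralradiuslowerbound} then gives $\rho_{\epsilon}(G_{1}\veedot G_{2})\geq 2W_{\epsilon}(G_{1}\veedot G_{2})/p$, with equality if and only if $G_{1}\veedot G_{2}$ is $\epsilon$-regular. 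Hence the corollary reduces entirely to verifying that $G_{1}\veedot G_{2}$ is never $\epsilon$-regular, which upgrades the inequality to a strict one.

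To establish non-$\epsilon$-regularity, I would exploit the equitable partition $V(G_{1})\cup I(G_{1})\cup V(G_{2})$ that underlies the quotient matrix $F$ in Theorem \ref{subdivisionvertex}. Because this partition is equitable for $\epsilon(G_{1}\veedot G_{2})$, the row sum of $\epsilon(G_{1}\veedot G_{2})$ is constant on each class and equals the corresponding row sum of $F$; these are $R_{1}=3q_{1}-3r_{1}$ on $V(G_{1})$, $R_{2}=3p_{1}+4q_{1}-8r_{1}+2p_{2}-2$ on $I(G_{1})$, and $R_{3}=2q_{1}+2p_{2}-2r_{2}-2$ on $V(G_{2})$. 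The graph is $\epsilon$-regular precisely when $R_{1}=R_{2}=R_{3}$, so it suffices to exhibit a single pair of distinct row sums.

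I would compare $R_{1}$ with $R_{2}$, whose difference is $R_{2}-R_{1}=3p_{1}+q_{1}-5r_{1}+2p_{2}-2$. Substituting the regularity relation $q_{1}=\tfrac{1}{2}p_{1}r_{1}$ turns this into $\tfrac{1}{2}p_{1}(6+r_{1})-5r_{1}+2p_{2}-2$, and then using the elementary fact that an $r_{1}$-regular graph satisfies $p_{1}\geq r_{1}+1$ bounds it below by $\tfrac{1}{2}(r_{1}-1)(r_{1}-2)+2p_{2}$. Since the hypothesis $r_{1}\geq 2$ forces $(r_{1}-1)(r_{1}-2)\geq 0$ while $p_{2}\geq 1$, the difference is at least $2$, so $R_{2}>R_{1}$ and $G_{1}\veedot G_{2}$ cannot be $\epsilon$-regular. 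The only step requiring genuine care is this last inequality chain, where the bounds $p_{1}\geq r_{1}+1$ and $r_{1}\geq 2$ must be combined correctly; every other ingredient is a direct substitution. Combining this with the equality case of Theorem \ref{spectralradiuslowerbound} yields the strict inequality claimed.
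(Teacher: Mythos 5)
Your proposal is correct and follows the same route as the paper: both identify the right-hand side as $2W_{\epsilon}(G_{1}\veedot G_{2})/(p_{1}+q_{1}+p_{2})$ via Corollary \ref{wienervertexjoin} and then invoke the lower bound of Theorem \ref{spectralradiuslowerbound}. The one substantive difference is that the paper's proof stops there, which as written only yields the non-strict inequality $\rho_{\epsilon}\geq 2W_{\epsilon}/p$; your verification that $G_{1}\veedot G_{2}$ is never $\epsilon$-regular is precisely what is needed to upgrade this to the strict inequality claimed, and you carry it out correctly. The partition $V(G_{1})\cup I(G_{1})\cup V(G_{2})$ is indeed equitable for $\epsilon(G_{1}\veedot G_{2})$, your class row sums $R_{1}=3q_{1}-3r_{1}$, $R_{2}=3p_{1}+4q_{1}-8r_{1}+2p_{2}-2$, $R_{3}=2q_{1}+2p_{2}-2r_{2}-2$ agree with the row sums of the quotient matrix $F$ of Theorem \ref{subdivisionvertex}, and the chain $R_{2}-R_{1}=3p_{1}+q_{1}-5r_{1}+2p_{2}-2=\tfrac{1}{2}p_{1}(r_{1}+6)-5r_{1}+2p_{2}-2\geq \tfrac{1}{2}(r_{1}-1)(r_{1}-2)+2p_{2}\geq 2$, using $q_{1}=\tfrac{1}{2}p_{1}r_{1}$, $p_{1}\geq r_{1}+1$, $r_{1}\geq 2$ and $p_{2}\geq 1$, is arithmetically sound. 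So yours is not a different method but a more complete version of the paper's two-line proof: what it buys is the equality-case analysis that the paper leaves implicit, without which the stated strictness does not formally follow.
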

\begin{proof}
 Proof follows from Theorem \ref{spectralradiuslowerbound} and corollary \ref{wienervertexjoin}  
\end{proof}

\section{Irreducibility and eccentricity spectrum of subdivision edge join of two graphs}
\begin{theorem}
    Let $G_{i}$ be a $(p_{i}, q_{i})$ graph, $i=1, 2$. Then $\epsilon (G_{1}\veebar G_{2})$ is irreducible.
    \end{theorem}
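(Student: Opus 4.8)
The plan is to mirror the strategy used for the subdivision-vertex join: by Theorem~\ref{irreducibilitytheorem}, it suffices to show that the eccentric graph $(G_1\veebar G_2)^e$ is connected. First I would set up notation exactly as before, writing $G = G_1\veebar G_2$ with $V(G) = V(G_1)\cup I(G_1)\cup V(G_2)$, where $V(G_1)$ consists of the original vertices $v_j$, the set $I(G_1)$ consists of the subdivision vertices $u_k$ (one per edge of $G_1$), and $V(G_2)$ consists of the vertices $w_l$. The defining feature here is that each $u_k\in I(G_1)$ is joined to every $w_l\in V(G_2)$, whereas in the vertex-join it was the original vertices that were joined across. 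This shift is what makes the edge-join case uniform and free of the star-graph exception that complicated the previous theorem.

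The main computational step is to determine the eccentricities of all three vertex classes in $G$. I expect that $e(w_l)=2$ for every $w_l\in V(G_2)$, since any two vertices of $V(G_2)$ are at distance $2$ through a common neighbour in $I(G_1)$, and every $w_l$ reaches every $u_k$ and (via one more step) the $v_j$'s within distance $2$ or $3$. For the subdivision vertices $u_k$, their eccentricity should again sit in a small range (around $2$ to $3$), being adjacent to the whole of $V(G_2)$ and thus close to everything that $V(G_2)$ is close to. For the original vertices $v_j\in V(G_1)$, the eccentricity will be slightly larger, around $3$, because reaching a $w_l$ requires passing through an incident subdivision vertex. I would verify these values carefully, distinguishing cases only if some degenerate $G_1$ (for instance with very few edges) forces a different diameter.

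Once the eccentricities are pinned down, the connectivity of $G^e$ follows by exhibiting eccentric adjacencies that link the three classes. The key observation is that for each $w_l$, the distance to every $u_k$ equals $2=\min\{e(u_k),e(w_l)\}$, so in $G^e$ every vertex of $V(G_2)$ is adjacent to every vertex of $I(G_1)$; this single fact already connects $I(G_1)\cup V(G_2)$ into one block. It then remains to attach each original vertex $v_j$ to this block. For any $v_j$, I expect that its distance to some suitably far subdivision vertex, or to some far $w_l$, realizes the minimum-eccentricity condition, giving an edge in $G^e$ from $v_j$ into the already-connected block. Because every $v_j$ is incident to at least one edge of $G_1$ (so $I(G_1)$ is genuinely adjacent to the $v_j$'s in $G$), such an eccentric neighbour should always exist.

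The step I anticipate as the main obstacle is making the last attachment argument airtight for \emph{every} original vertex $v_j$ without a star-type exception: I must be sure that each $v_j$ has an eccentric vertex lying in the connected block $I(G_1)\cup V(G_2)$, rather than only within $V(G_1)$ itself. If it turns out that some $v_j$ has all its eccentric vertices inside $V(G_1)$, I would instead chain two eccentric adjacencies --- first showing $v_j$ is $G^e$-adjacent to another original vertex, and then that the $V(G_1)$-vertices collectively connect to $I(G_1)$ --- to complete the path. After establishing that $G^e$ is connected, I invoke Theorem~\ref{irreducibilitytheorem} to conclude that $\epsilon(G_1\veebar G_2)$ is irreducible, which finishes the proof.
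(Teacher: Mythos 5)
There is a genuine gap, and it sits exactly at your ``key observation.'' In the subdivision-edge join, each subdivision vertex $u_k\in I(G_1)$ is by definition joined to every $w_l\in V(G_2)$, so $d(u_k,w_l)=1$, not $2$. Consequently $d(u_k,w_l)\neq\min\{e(u_k),e(w_l)\}=\min\{3,2\}=2$, and $u_k$ is \emph{not} adjacent to $w_l$ in the eccentric graph $G^{e}$; in fact no pair $(u_k,w_l)$ is adjacent in $G^{e}$ at all. You have imported the adjacency statement from the vertex-join case (where it is $V(G_1)$, not $I(G_1)$, that is joined to $V(G_2)$, so there the subdivision vertices really are at distance $2$ from $V(G_2)$), and it directly contradicts the definition you yourself state in your first paragraph. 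The claimed connected block $I(G_1)\cup V(G_2)$ is therefore not established, and the remainder of your argument, which attaches the vertices $v_j$ to that block, has nothing to attach to.

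The correct argument, which is the one the paper gives, swaps the roles of the two classes. Since each $v_j\in V(G_1)$ reaches $V(G_2)$ through an incident subdivision vertex, $d(v_j,w_l)=2$, while $e(v_j)\geq 3$ and $e(w_l)=2$; hence $d(v_j,w_l)=\min\{e(v_j),e(w_l)\}$ and \emph{every} vertex of $V(G_1)$ is $G^{e}$-adjacent to every vertex of $V(G_2)$ --- this is the connected block. It is then the subdivision vertices, not the original ones, that must be attached: for $u_k\in I(G_1)$ one has $e(u_k)=3$, its eccentric vertices are original vertices $v_j$ at distance $3$, and since $e(v_j)\geq 3$ this distance equals $\min\{e(u_k),e(v_j)\}$, giving the attaching edge in $G^{e}$. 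Note also that, contrary to your remark, the star case does not disappear here: when $G_1=K_{1,p_1-1}$ the eccentricities change ($e(v_1)=2$ for the center, $e(v_l)=4$ for the leaves), so the paper still runs a separate case for stars even though the theorem's conclusion has no exception. With these two corrections your outline would go through via Theorem \ref{irreducibilitytheorem}.
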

    \begin{proof}

 Let $V(G_{1}\veebar G_{2})=V(G_{1})\cup I(G_{1}) \cup V(G_{2})$ be a partition of the vertex set of $G=G_{1}\veebar G_{2}$. Let $v_{l}$, $u_{j}$ and $w_{k}$ represents the vertices in $V(G_{1}), I(G_{1})$ and $V(G_{2})$ respectively.\\
Case 1: $G_{1}$ is not a star graph.\\
In $G_{1}\veebar G_{2}$ we get,
          \begin{align*}
         3\leq e(v_{l})\leq 4, &\text{ for each }  v_{l} \in V(G_{1})\\
         e(u_{j})=3 &\text{ for each } u_{j} \in I(G_{1})\\
          e(w_{k})=2, &\text{ for each }w_{k}\in V(G_{2})\\
     \end{align*}
Since $d(w_{k}, v_{l})=2=min\{e(w_{k}),e(v_{l})\}$,  every vertex in $V(G_{2})$ is adjacent to every vertex in $V(G_{1})$ in $G^{e}$. Also for each vertex $u_{j}\in I(G_{1})$, the eccentric vertex of $u_{j}$ will be some $v_{l}$. Hence in $G^{e}$ every vertex in $I(G_{1})$ will be adjacent to some vertex in $V(G_{1})$. Therefore  $G^{e}$ is connected.\\
Case 2: $G_{1}$ is a star graph, $G_{1}=K_{1,p_{1}-1}$\\
If $p_{1}=2$, clearly $(G_{1}\veebar G_{2})^{e}$ is connected.\\
 Let $v_{1}$ be the unique vertex in $G_{1}$ such that $deg(v_{1})=p_{1}-1$, $p_{1}>2,$ then \begin{align*}
e(v_{1})=2 \text{ and } e(v_{l})=4, &\text{ for } l\neq 1\\
e(u_{j})=3, &\text{ for } u_{j}\in I(G_{1})\\
e(w_{k})=2, &\text{ for } w_{k}\in V(G_{2}). 
\end{align*}
$d(v_{l},w_{k})=2=min\{e(v_{l}),e(w_{k})\}$, so every vertex in $V(G_{1})$ is adjacent to every vertex in $V(G_{2})$. And for each $v_{l}, l\neq 1$ there exist $p_{1}-2$ vertices $u_{j}$ such that $d(v_{l},u_{j})=3=min\{e(v_{l}),e(u_{j})\}$.  Therefore $G^{e}$ is connected.
Hence by Theorem \ref{irreducibilitytheorem} $\epsilon(G_{1}\veebar G_{2})$ is irreducible.
\end{proof}

 \begin{theorem}\label{subdivisionedge}
Let $G_{j}$ be $r_{j}-$regular ($r_{1}\geq 2$), $(p_{j},q_{j})$ graph, $j=1, 2$. Let
$\{r_{1},\lambda_{2},\ldots, \lambda_{p_{1}}\}$ and 
$\{r_{2},\gamma_{2},\ldots,  \gamma_{p_{2}}\}$ be the $A$-eigenvalues of $G_{1}$ and $G_{2}$ respectively. Then the $\epsilon$-eigenvalues of $\epsilon(G_{1}\veebar G_{2})$ consists of
\begin{enumerate}
    \item $0$,   with multiplicity $q_{1}-p_{1}$.
    \item $-2(1+\gamma_{k})$,   $k=2, 3,\ldots, p_{2}$.
    \item$-2(1+\lambda_{i})\pm\sqrt{4(1+\lambda_{i})^2+9(\lambda_{i}+r_{1})}$, $i=2, \ldots, p_{1}.$
    \item  3 eigenvalues of the equitable quotient matrix of $\epsilon(G_{1}\veebar G_{2})$, \begin{center}
            $\begin{pmatrix}
               4(p_{1}-1-r_{1}) & 3(q_{1}-r_{1}) &2p_{2}  \\
               3(p_{1}-2) & 0 & 0 \\
               2p_{1}& 0&2(p_{2}-r_{2}-1)
             \end{pmatrix}$.\\
\end{center}
\end{enumerate}
\end{theorem}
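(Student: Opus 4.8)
The plan is to follow the template of the proof of Theorem~\ref{subdivisionvertex}: first write $\epsilon(G_{1}\veebar G_{2})$ as a $3\times 3$ block matrix with respect to the partition $\{V(G_{1}),I(G_{1}),V(G_{2})\}$, then build eigenvectors of $\epsilon$ out of eigenvectors of $A_{1}$, $A_{2}$ and $B$ (the adjacency matrix of $L(G_{1})$), and finally account for the three remaining eigenvalues through the equitable quotient matrix $F$.

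First I would pin down the eccentricity matrix. Using the distances from the preceding irreducibility proof---namely $e(w_{k})=2$ on $V(G_{2})$, $e(u_{j})=3$ on $I(G_{1})$, and $3\le e(v_{l})\le 4$ on $V(G_{1})$---I would inspect each block. The $V(G_{1})$--$V(G_{1})$ entry equals $4$ exactly on non-adjacent pairs (the distinction $e(v_{l})=4$ versus $3$ is immaterial here, since when $G_{1}$ is complete there are no non-adjacent pairs and the block vanishes anyway), giving $4(J-I-A_{1})$; the $V(G_{1})$--$I(G_{1})$ block keeps distance $3$ precisely on non-incident pairs, giving $3(J-R)$; the $V(G_{1})$--$V(G_{2})$ block keeps every distance $2$, giving $2J$; the $I(G_{1})$--$I(G_{1})$ and $I(G_{1})$--$V(G_{2})$ blocks vanish because the governing distances ($2$ and $1$) are strictly smaller than the relevant minimum eccentricity; and the $V(G_{2})$--$V(G_{2})$ block keeps distance $2$ on non-adjacent pairs, giving $2(J-I-A_{2})$. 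Hence
\[
\epsilon(G_{1}\veebar G_{2})=\begin{pmatrix} 4(J-I-A_{1}) & 3(J-R) & 2J \\ 3(J-R^{T}) & 0 & 0 \\ 2J & 0 & 2(J-I-A_{2}) \end{pmatrix}.
\]

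Next I would manufacture eigenvectors. For part~(1), take $Z$ in the $(-2)$-eigenspace of $B$; by Lemma~\ref{Beigenvalue} $RZ=0$, and since $Z\perp J_{q_{1}\times 1}$ forces $J_{p_{1}\times q_{1}}Z=0$ as well, the vector $(0,Z,0)^{T}$ lies in the kernel of $\epsilon$, yielding eigenvalue $0$ with multiplicity $q_{1}-p_{1}$. For part~(2), an eigenvector $W$ of $A_{2}$ for $\gamma_{k}$ (orthogonal to the all-ones vector for $k\ge 2$) gives $(0,0,W)^{T}$ with eigenvalue $-2(1+\gamma_{k})$. For part~(3), start from an eigenvector $X$ of $A_{1}$ for $\lambda_{i}$ ($i\ge 2$); by Lemma~\ref{thelinegraph}, $R^{T}X$ is a $B$-eigenvector with $RR^{T}X=(\lambda_{i}+r_{1})X$, while $JX=0$ and $JR^{T}X=0$. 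Restricting $\epsilon$ to the two-dimensional space spanned by $(X,0,0)^{T}$ and $(0,R^{T}X,0)^{T}$ produces the $2\times 2$ matrix $\begin{pmatrix} -4(1+\lambda_{i}) & -3(\lambda_{i}+r_{1}) \\ -3 & 0 \end{pmatrix}$, whose characteristic equation $\mu^{2}+4(1+\lambda_{i})\mu-9(\lambda_{i}+r_{1})=0$ has roots $-2(1+\lambda_{i})\pm\sqrt{4(1+\lambda_{i})^{2}+9(\lambda_{i}+r_{1})}$, giving $2(p_{1}-1)$ eigenvalues. Finally, the three vectors that are constant on each cell of the partition span the orthogonal complement of all the eigenvectors above; since every block of $\epsilon$ has constant row sums, the partition is equitable, so the restriction of $\epsilon$ to this space is exactly the stated quotient matrix $F$, and its three eigenvalues complete the spectrum. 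The dimension count $(q_{1}-p_{1})+(p_{2}-1)+2(p_{1}-1)+3=p_{1}+q_{1}+p_{2}$ confirms that nothing is missed.

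The main obstacle I anticipate is not the linear algebra, which runs parallel to the vertex-join case, but the careful verification of the block form of $\epsilon$---in particular confirming that the $V(G_{1})$--$V(G_{1})$ block is uniformly $4(J-I-A_{1})$ in both the complete and non-complete cases for $G_{1}$, and checking that all cross-terms in which $J$ acts on a non-trivial eigenvector vanish via the orthogonality relations $JX=0$, $JW=0$, $JR^{T}X=0$, $JZ=0$ coming from Lemma~\ref{thelinegraph}.
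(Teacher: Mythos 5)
Your proposal is correct and follows essentially the same route as the paper's proof: the same block form of $\epsilon(G_{1}\veebar G_{2})$, the same eigenvectors $(0,Z,0)^{T}$ and $(0,0,W)^{T}$ obtained via Lemma \ref{Beigenvalue}, the same use of Lemma \ref{thelinegraph} on the pair $(X,R^{T}X)$, and the same equitable quotient matrix accounting for the last three eigenvalues. The only differences are presentational: you extract the quadratic as the characteristic polynomial of the $2\times 2$ restriction to $\mathrm{span}\{(X,0,0)^{T},(0,R^{T}X,0)^{T}\}$ where the paper solves for the parameter $t$ in the ansatz $(tX,R^{T}X,0)^{T}$ (the substitution $\mu=-3t$ converts the paper's equation $3t^{2}-4(1+\lambda_{i})t-3(\lambda_{i}+r_{1})=0$ into yours), and you additionally spell out the eccentricity computations behind the block form and the orthogonality/dimension count behind the quotient step, both of which the paper asserts without detail.
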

\begin{proof}

 By a proper labeling  of the vertices of $G_{1}\veebar G_{2}$, we get
    \begin{equation*} 
            \epsilon(G_{1}\veebar G_{2}) =\begin{pmatrix}
         4(J-I-A_{1})& 3(J-R)& 2J\\
        3(J-R^{T}) & 0& 0\\
        2J& 0& 2(J-I-A_{2})
        \end{pmatrix},
           \end{equation*}                  
       where $R$ and $A_{j}$ represents the incidence matrix of $G_{1}$, adjacency  matrix of $G_{j}$ for $j=1,2.$\\
 Let $Z$ be an eigenvector of $B$ corresponding to the eigenvalue $-2$ (with multiplicity $q_{1}-p_{1}$), where $B$ is the adjacency matrix of $L(G_{1})$. By Lemma \ref{Beigenvalue}, $RZ=0$. Consider $\psi=\begin{pmatrix}
                                     0\\
                                     Z\\
                                     0
                                 \end{pmatrix}$.
        Then we have \begin{align*}    
        \epsilon(G_{1}\veebar G_{2})\psi=\begin{pmatrix}
                                                   4(J-I-A_{1})&3(J-R)&2J\\
                                                   3(J-R^{T})&0&0\\
                                                   2J&0&2(J-I-A_{2})
                                                  \end{pmatrix}
                                                  \begin{pmatrix}
                                                      0\\
                                                      Z\\
                                                      0
                                                  \end{pmatrix}
                                             =0\begin{pmatrix}
                                                 0\\
                                                 Z\\
                                                 0
                                             \end{pmatrix}
                                            =0 \psi.
                                            \end{align*}
                           
Thus, $0$ is an eigenvalue of $\epsilon(G_{1}\veebar G_{2})$ with multiplicity $q_{1}-p_{1}$.\\
Let $W$ be an eigenvector of $A_{2}$ corresponding to the eigenvalue  $\gamma_{k}$ ( for $k=2,\ldots, p_{2}$).                                                              Let $\zeta=\begin{pmatrix}
                                                0\\
                                               0\\
                                                W
                                            \end{pmatrix},$
   then we have $\epsilon(G_{1}\veebar G_{2}) \zeta=-2(1+\gamma_{k}) \zeta$. Thus, $-2(1+\gamma_{k})$ ( for $k={2,\ldots, p_{2}}$) is an eigenvalue of $\epsilon(G_{1}\veebar G_{2}).$

Let $X$ be an eigenvector of $A_{1}$ corresponding to the eigenvalue $\lambda_{i}$ ( for $i=2,\ldots,p_{1}$).  Using Lemma \ref{thelinegraph}, we get $R^{T}X$ is an eigenvector of $B$ corresponding to the eigenvalue $\lambda_{i}+r_{1}-2.$ 
Thus, the vectors $X$ and $R^{T}X$ are orthogonal to the vector $J_{p_{1}\times 1}.$\\ 
Now, consider the vector $\phi^{*}=\begin{pmatrix}
    tX\\
    R^{T}X\\
    0
\end{pmatrix}.$
Next, we determine under what condition $\phi^{*}$ is an eigenvector of $\epsilon(G_{1}\veebar G_{2}).$ If $\mu^{*}$ is an eigenvalue of $\epsilon(G_{1}\veebar G_{2})$ such that $\epsilon(G_{1}\veebar G_{2})\phi^{*}=\mu^{*} \phi^{*}$, then
   \begin{align*}
       \begin{pmatrix}
               4(J-I-A_{1})& 3(J-R)& 2J\\
        3(J-R^{T}) & 0& 0\\
        2J& 0& 2(J-I-A_{2})
             \end{pmatrix}
             \begin{pmatrix}
       tX\\
       R^{T}X\\
       0
   \end{pmatrix}
   = \mu^{*} \begin{pmatrix}
      tX\\
      R^{T}X\\
      0
   \end{pmatrix}\\
  \begin{pmatrix}
            -4t(1+\lambda_{i})X-3(\lambda_{i}+r_{1})X\\
            -3tR^{T}X\\
            0
        \end{pmatrix}
        = \mu^{*} \begin{pmatrix}
            tX\\
            R^{T}X\\
            0
        \end{pmatrix}.    
   \end{align*}
   \text{ Therefore }
   \begin{align*}
       -(4t+4t\lambda_{i}+3\lambda_{i}+3r_{1})&=\mu^{*} t\\
        -3t&=\mu^{*}.
   \end{align*}
From this we will get \begin{equation*}
    3t^{2}-4(1+\lambda_{i})t-3(\lambda_{i}+r_{1})=0
                        \end{equation*} 
 so that $t$ has two values
\begin{equation*}
     t_{1} =\frac{2(1+\lambda_{i})+\sqrt{4(1+\lambda_{i})^{2}+9(\lambda_{i}+r_{1})}}{3}, 
     t_{2} =\frac{2(1+\lambda_{i})-\sqrt{4(1+\lambda_{i})^{2}+9(\lambda_{i}+r_{1})}}{3}.
     \end{equation*}
   Therefore, corresponding to each eigenvalue $\lambda_{i}$  ( for $i=2,\ldots,p_{1}$) of $G_{1}$ we get 2 eigenvalues of $\epsilon(G_{1}\veebar G_{2})$ as $-3t_{1}$ and $-3t_{2}$. Hence, we get $2(p_{1}-1)$ eigenvalues of $\epsilon(G_{1}\veebar G_{2})$.\\
   The remaining $3$ eigenvalues of $\epsilon(G_{1}\veebar G_{2})$ are eigenvalues of its equitable  quotient matrix, 
   \begin{center}    
                 $F=\begin{pmatrix}
                  4(p_{1}-1-r_{1}) & 3(q_{1}-r_{1}) &2p_{2}  \\
                  3(p_{1}-2) & 0 & 0 \\
                  2p_{1}& 0&2(p_{2}-r_{2}-1)
             \end{pmatrix}$.\\                                    
           \end{center}                                  
  \end{proof} 
  The following corollary gives new pairs of $\epsilon-$cospectral graphs.
\begin{corollary}
     Let $G_{1}$ and $G_{2}$ be two regular cospectral graphs and $H$ be any arbitrary regular graph. Then \begin{enumerate}
    \item  $G_{1}\veebar H$ and $G_{2}\veebar H$  are $\epsilon-$ cospectral.
    \item $H \veebar G_{1}$ and $H\veebar G_{2}$ are $\epsilon-$ cospectral.
    \end{enumerate}
\end{corollary}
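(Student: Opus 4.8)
The plan is to read both assertions straight off the explicit $\epsilon$-spectrum computed in Theorem \ref{subdivisionedge}. The decisive observation is that every entry of that spectrum is a function only of the integer parameters $p_1,q_1,r_1,p_2,q_2,r_2$ together with the $A$-eigenvalue lists $\{\lambda_i\}$ of $G_1$ and $\{\gamma_k\}$ of $G_2$: the eigenvalue $0$ occurs with multiplicity $q_1-p_1$; the values $-2(1+\gamma_k)$ are governed by the spectrum of the second factor; the pairs $-2(1+\lambda_i)\pm\sqrt{4(1+\lambda_i)^2+9(\lambda_i+r_1)}$ are governed by the spectrum of the first factor and its regularity $r_1$; and the three remaining $\epsilon$-eigenvalues are the roots of the characteristic polynomial of the quotient matrix $F$, whose entries involve only $p_1,q_1,r_1,p_2,r_2$. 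Thus the whole $\epsilon$-spectrum is determined by this finite packet of data.

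Next I would note that two cospectral regular graphs share every ingredient that feeds the formula. Equality of $A$-spectra gives the same order $p$, the same eigenvalue list, and the same size, since $2q=\sum_i\lambda_i^2$ is spectrally determined; regularity then identifies $r$ with the largest eigenvalue. Hence $G_1$ and $G_2$ agree in $p$, $q$, $r$ and in their complete list of $A$-eigenvalues.

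For part (1) the graphs $G_1,G_2$ sit in the first slot of the join while $H$ is held fixed in the second. Feeding the common data $(p,q,r,\{\lambda_i\})$ of $G_1,G_2$ and the fixed data of $H$ into Theorem \ref{subdivisionedge} produces identical eigenvalue blocks for $G_1\veebar H$ and $G_2\veebar H$: the $0$-block of multiplicity $q-p$, the $H$-block $-2(1+\gamma_k)$, the $G$-block $-2(1+\lambda_i)\pm\sqrt{\cdots}$, and the three roots of $F$ (whose entries use only the shared $p,q,r$ and the fixed parameters of $H$) all coincide, so the $\epsilon$-spectra are equal. Part (2) is the mirror image: $H$ is fixed in the first slot and $G_1,G_2$ fill the second, so now the block $-2(1+\gamma_k)$ is driven by the common spectrum of $G_1,G_2$ while every other block is controlled by $H$ and by the shared $p_2,r_2$; again the two spectra match.

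The one genuine point left to verify — and the step I expect to be the mildest obstacle — is non-isomorphism, which the definition of $\epsilon$-cospectrality demands. I would settle it structurally: the subdivision-edge join remembers its first factor, in the sense that $G$ can be reconstructed from $G\veebar H$ once $H$ is known, so $G_1\veebar H\cong G_2\veebar H$ would force $G_1\cong G_2$, contradicting the non-isomorphism of the cospectral pair; the argument for part (2) is symmetric. I would also record that Theorem \ref{subdivisionedge} is stated under $r_1\ge 2$, so applying it in part (1) presumes the common regularity of $G_1,G_2$ is at least $2$, and in part (2) presumes that of $H$.
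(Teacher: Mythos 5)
Your proposal is correct and is essentially the paper's own (unwritten) proof: the corollary appears there with no argument precisely because Theorem \ref{subdivisionedge} expresses the $\epsilon$-spectrum of $G_{1}\veebar G_{2}$ purely in terms of the orders, sizes, regularities and $A$-eigenvalue lists of the two factors, all of which cospectral regular graphs share, so both parts follow by substituting the common data. Your extra observations --- that $q$ and $r$ are determined by the $A$-spectrum of a regular graph, that non-isomorphism of the resulting joins must hold for the paper's definition of $\epsilon$-cospectrality, and that the hypothesis $r_{1}\geq 2$ must be satisfied by whichever graph occupies the first slot --- only make explicit what the paper leaves tacit.
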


  \begin{corollary}\label{wienerofedgejoin}
      Let $G_{j}$, $j=1, 2$ be a $r_{j}-$regular ($r_{1}\geq 2$), $(p_{i}, q_{i})$ graph. Then the $eccentricity$ $Wiener$ $index$ of $\epsilon(G_{1}\veebar G_{2})$ is given by, \begin{equation*}
          W_{\epsilon}(G_{1}\veebar G_{2})=2p_{1}^{2}+p_{2}^{2}-\frac{p_{1}}{2}(4-4p_{2}+7r_{1})+3q_{1}(p_{1}-1)-p_{2}(r_{2}+1).
      \end{equation*}  
  \end{corollary}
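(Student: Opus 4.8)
The plan is to compute $W_{\epsilon}(G_{1}\veebar G_{2})$ directly from its defining formula $W_{\epsilon}(G)=\tfrac12\sum_{i,j}(\epsilon(G))_{ij}$, using the explicit block form of $\epsilon(G_{1}\veebar G_{2})$ established in Theorem \ref{subdivisionedge}. Since the total of all entries of a matrix is additive over any block partition of it, I would first record the entry-sum of each of the nine blocks separately, then add them, and finally divide by two.

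For this I would use a short list of elementary entry-sum facts. For an all-ones block $J_{m\times n}$ the total is $mn$; the identity $I_{n}$ contributes $n$; the adjacency matrix $A_{j}$ contributes $\mathbf{1}^{T}A_{j}\mathbf{1}=2q_{j}=r_{j}p_{j}$ by $r_{j}$-regularity; and the incidence matrix $R$ contributes $\mathbf{1}^{T}R\mathbf{1}=2q_{1}$, which follows from Lemma \ref{thelinegraph} (equivalently, each column of $R$ has exactly two ones). Applying these to the diagonal blocks gives $4(p_{1}^{2}-p_{1}-2q_{1})$ for the top-left block $4(J-I-A_{1})$ and $2(p_{2}^{2}-p_{2}-2q_{2})$ for the bottom-right block $2(J-I-A_{2})$, while the central block is zero. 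The two off-diagonal $V(G_{1})$--$I(G_{1})$ blocks $3(J-R)$ and $3(J-R^{T})$ each contribute $3(p_{1}q_{1}-2q_{1})$, the two $V(G_{1})$--$V(G_{2})$ blocks $2J$ each contribute $2p_{1}p_{2}$, and the $I(G_{1})$--$V(G_{2})$ blocks vanish; because $\epsilon$ is symmetric these off-diagonal contributions occur in equal pairs, which is a convenient internal check.

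Summing the nine contributions and halving yields $W_{\epsilon}=2p_{1}^{2}+p_{2}^{2}-2p_{1}+2p_{1}p_{2}+3p_{1}q_{1}-10q_{1}-p_{2}-2q_{2}$. The final step is purely algebraic: substituting the regularity relations $2q_{2}=r_{2}p_{2}$ (to produce the term $-p_{2}(r_{2}+1)$) and rewriting $3p_{1}q_{1}-10q_{1}=3q_{1}(p_{1}-1)-\tfrac{7}{2}r_{1}p_{1}$ via $2q_{1}=r_{1}p_{1}$ recovers the stated closed form. I do not anticipate any conceptual obstacle; the only thing to watch is the bookkeeping -- in particular, remembering that the symmetric off-diagonal blocks are counted twice before the factor $\tfrac12$, correctly using that $R$ has column sums $2$, and reconciling the combined coefficient $-10q_{1}$ with the mixed $r_{1}p_{1}$/$q_{1}$ presentation appearing in the target expression.
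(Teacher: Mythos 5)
Your proposal is correct: summing the entries of the block matrix of $\epsilon(G_{1}\veebar G_{2})$ from Theorem \ref{subdivisionedge}, using the column sums of $R$ and the regularity relations $2q_{j}=r_{j}p_{j}$, is exactly the computation the paper intends (it states the corollary without writing out the proof), and your intermediate expression $2p_{1}^{2}+p_{2}^{2}-2p_{1}+2p_{1}p_{2}+3p_{1}q_{1}-10q_{1}-p_{2}-2q_{2}$ does simplify to the stated closed form. No gaps; the bookkeeping, including the doubled off-diagonal blocks and the final factor $\tfrac12$, all checks out.
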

\begin{corollary}
    Let $G_{j}$ be an $r_{j}-$regular ($r_{1}\geq 2)$, ($p_{j}$, $q_{j}$) graph, $j=1, 2$. Then \begin{equation*}
        \rho_{\epsilon}(G_{1}\veebar G_{2})> \frac{4p_{1}^{2}+2p_{2}^{2}-p_{1}(4-4p_{2}+7r_{1})+6q_{1}(p_{1}-1)-2p_{2}(r_{2}+1)}{p_{1}+p_{2}+q_{1}}.
    \end{equation*}
\end{corollary}

The $join$ $G_{1}\vee G_{2}$ of two graphs $G_{1}$ and $G_{2}$, is the graph obtained from $G_{1}\cup G_{2}$ by joining every vertex of $G_{1}$ with every vertex of $G_{2}$, where $G_{1}\cup G_{2}$ denotes the disjoint union of $G_{1}$ and $G_{2}$. 
In \cite{wang2018anti} the authors established that, if $G$ is a $r-$regular graph of order $p$ and diameter $2$, then
   \begin{equation*}
       \epsilon(G\vee K_{1})=\begin{pmatrix}
           2(J-I-A)&J\\
           J&0
       \end{pmatrix},
   \end{equation*}
   where $A$ is the adjacency matrix of $G$. The same matrix is obtained if $G$ is a $r-$ regular graph with order $p$ and $diam(G) \geq 2$. 
In the next theorem, we consider the $join$ of a non-complete regular graph with $diam(G) \geq 2$ and $K_{1}$. 
\begin{theorem}\label{GjoinK1}
    Let $G$ be a $r-$regular, non-complete, ($p, q$) graph. Then \begin{equation*}
    spec_{\epsilon}(G\vee K_{1})=\begin{pmatrix}
        (p-r-1)\pm \sqrt{(p-r-1)^{2}+p}& -2(\lambda_{2}+1)&\cdots&-2(\lambda_{p}+1)\\
        1&1&\cdots&1
    \end{pmatrix},
\end{equation*}
where $\{r, \lambda_{2}, \lambda_{3},\ldots,  \lambda_{p}\}$ are $A-$eigenvalues of $G$.
\end{theorem}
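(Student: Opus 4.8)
The plan is to diagonalize the block matrix
\begin{equation*}
\epsilon(G\vee K_{1})=\begin{pmatrix} 2(J-I-A) & J \\ J & 0\end{pmatrix}
\end{equation*}
by exploiting the eigenstructure of $A$, in the same spirit as the proofs of Theorems~\ref{subdivisionvertex} and~\ref{subdivisionedge}. Since $G$ is $r$-regular, the all-ones vector $\mathbf{1}=J_{p\times 1}$ satisfies $A\mathbf{1}=r\mathbf{1}$, and every eigenvector $X$ of $A$ associated with $\lambda_{i}$ for $i\ge 2$ is orthogonal to $\mathbf{1}$, so that $JX=0$. I would split $\mathbb{R}^{p+1}$ into the $(p-1)$-dimensional space of vectors $\binom{X}{0}$ with $X\perp\mathbf{1}$ and the $2$-dimensional space spanned by $\binom{\mathbf{1}}{0}$ and $\binom{0}{1}$. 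Both are invariant under $\epsilon(G\vee K_{1})$ and mutually orthogonal, and their dimensions sum to $p+1$, so the full $\epsilon$-spectrum is the union of the spectra on the two pieces.

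First I would treat the large piece. For an eigenvector $X$ of $A$ with $AX=\lambda_{i}X$ and $\mathbf{1}^{T}X=0$ (for $i=2,\dots,p$), using $JX=0$ on both the top and bottom blocks, a direct computation gives
\begin{equation*}
\epsilon(G\vee K_{1})\begin{pmatrix}X\\0\end{pmatrix}=\begin{pmatrix}2(JX-X-AX)\\ \mathbf{1}^{T}X\end{pmatrix}=-2(1+\lambda_{i})\begin{pmatrix}X\\0\end{pmatrix},
\end{equation*}
so each $\lambda_{i}$ ($i\ge 2$) contributes the $\epsilon$-eigenvalue $-2(\lambda_{i}+1)$, accounting for $p-1$ of the $p+1$ eigenvalues.

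Next I would recover the remaining two eigenvalues from the partition $\{V(G),V(K_{1})\}$, which is equitable for $\epsilon(G\vee K_{1})$: since $J-I-A=A(\overline{G})$, every vertex of $G$ carries total weight $2(p-1-r)$ to the other vertices of $V(G)$ and weight $1$ to the apex, while the apex carries weight $p$ to $V(G)$. Hence the quotient matrix is
\begin{equation*}
F=\begin{pmatrix} 2(p-1-r) & 1 \\ p & 0\end{pmatrix},
\end{equation*}
whose characteristic polynomial $\mu^{2}-2(p-r-1)\mu-p=0$ yields $\mu=(p-r-1)\pm\sqrt{(p-r-1)^{2}+p}$. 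Equivalently, one checks directly that $\binom{\alpha\mathbf{1}}{\beta}$ is an eigenvector exactly when $(\alpha,\beta)$ solves the associated $2\times 2$ system. Combined with the $p-1$ eigenvalues found above, this exhausts all $p+1$ eigenvalues and produces the claimed $\epsilon$-spectrum.

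I do not expect a serious obstacle; the only point needing care is the bookkeeping that the two invariant subspaces are complementary and orthogonal, so that no eigenvalue is double-counted or missed. This follows because $\{\mathbf{1}\}\cup\{X_{i}\}_{i\ge 2}$ is an orthogonal basis of $\mathbb{R}^{p}$ and the apex coordinate supplies the one extra dimension. A minor preliminary check is that $G$ non-complete and $r$-regular forces every vertex to have at least one non-neighbour (as $r<p-1$), which is precisely the condition guaranteeing $\mathrm{diam}(G)\ge 2$ and hence validating the stated block form of $\epsilon(G\vee K_{1})$, so the computation applies verbatim.
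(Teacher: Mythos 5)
Your proposal is correct and takes essentially the same route as the paper's proof: the paper likewise writes $\epsilon(G\vee K_{1})=\left(\begin{smallmatrix}2(J-I-A)&J\\ J&0\end{smallmatrix}\right)$, obtains the eigenvalues $-2(1+\lambda_{i})$, $i=2,\ldots,p$, from the vectors $\left(\begin{smallmatrix}X\\0\end{smallmatrix}\right)$ with $X\perp\mathbf{1}$, and gets the remaining two eigenvalues from the equitable quotient matrix $\left(\begin{smallmatrix}2(p-r-1)&1\\ p&0\end{smallmatrix}\right)$. Your additional bookkeeping (the explicit orthogonal invariant-subspace decomposition and the remark that non-completeness forces $\mathrm{diam}(G)\geq 2$, validating the block form) only makes explicit what the paper leaves implicit.
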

\begin{proof}
By a proper labeling of vertices in $G\vee K_{1}$, we get $\epsilon(G\vee K_{1})=\begin{pmatrix}
                                      2(J-I-A)&J\\
                                      J&0
                                   \end{pmatrix},$
   where $A$ is the adjacency matrix of $G$. 
Let $X$ be an eigenvector of $A$ corresponding to the eigenvalue $\lambda_{i}$ (for $i=2,\ldots, p$) . Then we have \begin{equation*}
\epsilon(G\vee K_{1})\begin{pmatrix}
    X\\
    0
\end{pmatrix}=-2(1+\lambda_{i})\begin{pmatrix}
    X\\
    0
    \end{pmatrix}.
\end{equation*}
Thus, $-2(1+\lambda_{i})$ (for $i=2,\ldots, p$) is an eigenvalue of $\epsilon(G\vee K_{1})$ .
The remaining $2$ eigenvalues are given by the equitable quotient matrix of $\epsilon(G\vee K_{1})$, $\begin{pmatrix}
    2(p-r-1)&1\\
    p&0
\end{pmatrix}$.

\end{proof}
In \cite{wang2018anti}, the authors discussed  the $\epsilon-$spectrum of $G\vee G$, where $G$ is a regular graph with $diam(G)=2$. Later in \cite{mahato2020spectra}, Mahato et al. discussed  the $\epsilon-$ spectrum of $G_{1}\vee G_{2}$, where $G_{1}$ and $G_{2}$ are non-complete connected graphs. 
\\
In this context, we will refer to the  subsequent theorem for its application in Section $5.$
\begin{theorem}\label{GjoinG}\cite{mahato2020spectra}
    Let $G$ is a $r-$regular, non complete, $(p, q)$ graph. Then the $\epsilon-$spectrum of $G\vee G$ is given by, \begin{equation*}
    Spec_{\epsilon}(G\vee G)=\begin{pmatrix}
       2(p-r-1)&-2(\lambda_{2}+1)&\cdots&-2(\lambda_{p}+1)\\
       2&2&\cdots&2
   \end{pmatrix}, 
\end{equation*}
where $\lambda_{i}\neq r$, $i=2, 3,\ldots, p$ are the $A-$eigenvalues of $G$.
\end{theorem}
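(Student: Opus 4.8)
The plan is to identify the eccentricity matrix of $G\vee G$ explicitly and then read off its spectrum from the resulting block structure. First I would analyze distances in $G\vee G$. Writing the two copies of $G$ as $G^{(1)}$ and $G^{(2)}$, every vertex of $G^{(1)}$ is adjacent to every vertex of $G^{(2)}$, so any two vertices in different copies are at distance $1$. Within a single copy, two vertices are at distance $1$ if they are adjacent in $G$ and at distance $2$ otherwise (a common neighbour always exists in the other copy). Since $G$ is $r$-regular and non-complete we have $r<p-1$, so every vertex has a non-neighbour inside its own copy; hence $e(v)=2$ for every vertex $v$, and $G\vee G$ is self-centered with $\mathrm{diam}(G\vee G)=2$.

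Because all eccentricities equal $2$, the defining condition $d(v_i,v_j)=\min\{e(v_i),e(v_j)\}$ reduces to $d(v_i,v_j)=2$. The only distance-$2$ pairs are non-adjacent vertices lying in the same copy; cross-copy pairs are at distance $1$ and therefore contribute $0$. Consequently $\epsilon(G\vee G)$ is block diagonal, with each diagonal block equal to $2(J-I-A)$, where $A$ is the adjacency matrix of $G$ --- the same block already recorded for the within-copy part of $\epsilon(G\vee K_1)$ in the discussion preceding Theorem \ref{GjoinK1}. Thus $\epsilon(G\vee G)=\mathrm{diag}\bigl(2(J-I-A),\,2(J-I-A)\bigr)$, a direct sum of two identical symmetric matrices.

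It then remains to compute the spectrum of the single block $2(J-I-A)$ and to double each multiplicity. Here I would use $r$-regularity: the all-ones vector $\mathbf 1$ satisfies $A\mathbf 1=r\mathbf 1$ and $J\mathbf 1=p\mathbf 1$, giving the eigenvalue $2(p-1-r)$; every eigenvector $X$ of $A$ with $AX=\lambda_i X$ for $i\ge 2$ is orthogonal to $\mathbf 1$, so $JX=0$ and $2(J-I-A)X=-2(1+\lambda_i)X$. Hence the block has spectrum $\{2(p-r-1),\,-2(\lambda_2+1),\ldots,-2(\lambda_p+1)\}$, and since $\epsilon(G\vee G)$ is the direct sum of two such blocks, each eigenvalue occurs with multiplicity $2$, which is exactly the claimed $\epsilon$-spectrum.

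The main obstacle is the careful distance bookkeeping in the first step: one must verify that non-adjacent vertices within a copy are genuinely at distance $2$ (not larger) and that this forces every eccentricity to equal $2$, which is precisely where non-completeness is essential --- if $G$ were complete, all distances would be $1$, every eccentricity would drop to $1$, and the eccentricity matrix would vanish. Once the block-diagonal form is secured, the spectral computation is routine via regularity.
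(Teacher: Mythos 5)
Your proof is correct. Note that the paper itself does not prove this statement at all: Theorem~\ref{GjoinG} is quoted from \cite{mahato2020spectra} expressly so it can be applied in Section~5, so there is no internal proof to compare against. Your argument is, however, exactly in the style the paper uses for its own join results (Theorems~\ref{GjoinK1} and~\ref{joinunion}): first pin down the eccentricities, then write the eccentricity matrix in block form with diagonal blocks $2(J-I-A)$, and finally split the spectral computation according to whether an eigenvector of $A$ is the all-ones vector or orthogonal to it. Your key structural observation is sound: regularity together with non-completeness gives $r\le p-2$, so every vertex has a non-neighbour in its own copy, every eccentricity equals $2$, the cross-copy entries (distance $1$) vanish, and $\epsilon(G\vee G)$ is the direct sum of two copies of $2(J-I-A)$, whence every eigenvalue of the block is doubled. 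The only cosmetic difference from the paper's template is that, since your matrix is block diagonal, you read the eigenvalue $2(p-r-1)$ directly from the all-ones vector in each block, whereas the paper's analogous proofs package the residual eigenvalues into an equitable quotient matrix; both routes are routine and yield the stated spectrum.
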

Next, we will discuss the $join$ and $subdivision$ $vertex$($edge$)$join$ of a connected graph with a non-connected graph having two components. We will explore its $\epsilon-$ spectrum and reveal some new graph families with irreducible or reducible eccentricity matrix. 
\begin{theorem}\label{joinunion}
    Let $G_{j}$ be an $r_{j}-$regular, $(p_{j}, q_{j})$ graph,  $j=0, 1, 2$, and $diam(G_{1})\geq2$. Let  $\{r_{0}, \lambda_{2}, \lambda_{3},$ $\ldots,  \lambda_{p_{0}}\}$,
     $\{r_{1}, \beta_{2}, \beta_{3},\ldots,  \beta_{p_{1}}\}$ and  $\{r_{2}, \gamma_{2}, \gamma_{3},\ldots,  \gamma_{p_{2}}\}$ are the $A-$ eigenvalues of $G_{0}$, $G_{1}$ and $G_{2}$ respectively. Then $\epsilon-$ eigenvalues of $G_{0}\vee (G_{1}\cup G_{2})$ consists of
     \begin{enumerate}
         \item $-2(1+\lambda_{i})$, $i=2,\ldots, p_{0}.$
         \item $-2(1+\beta_{k})$, $k=2,\ldots, p_{1}.$
         \item $-2(1+\gamma_{l})$, $l=2,\ldots,  p_{2}.$
         \item $3$ eigenvalues of the equitable quotient matrix of $\epsilon(G_{0}\vee (G_{1}\cup G_{2}))$,\begin{center}
             $\begin{pmatrix}
                 2(p_{0}-1-r_{0})&0&0\\
                 0&2(p_{1}-1-r_{1})&2p_{2}\\
                 0&2p_{1}&2(p_{2}-1-r_{2})
             \end{pmatrix}$.
         \end{center} 
     \end{enumerate}
\end{theorem}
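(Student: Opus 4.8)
The plan is to first pin down all the eccentricities in $G=G_0\vee(G_1\cup G_2)$ so that $\epsilon(G)$ can be written in a clean block form, and then to diagonalize it by lifting adjacency eigenvectors of the three factors and applying an equitable partition, exactly as in the proofs of Theorem~\ref{subdivisionvertex} and Theorem~\ref{GjoinK1}. Since every vertex of $G_0$ is adjacent to every vertex of $G_1\cup G_2$, any two vertices lying in different parts among $V(G_0),V(G_1),V(G_2)$ are at distance $1$ or $2$, and two vertices in the same part are at distance $1$ or $2$ as well. Using $\mathrm{diam}(G_1)\ge 2$ together with the non-completeness of $G_0$, a short case check should give $e(x)=2$ for every vertex $x$, so the only nonzero entries retained in $\epsilon(G)$ are those equal to $2$.

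With all eccentricities equal to $2$, an off-diagonal entry of $\epsilon(G)$ equals $2$ precisely when the corresponding distance is $2$ and is $0$ otherwise. Every $V(G_0)$--$V(G_1)$ and $V(G_0)$--$V(G_2)$ distance is $1$, so those blocks vanish, while every $V(G_1)$--$V(G_2)$ pair is non-adjacent but joined through $G_0$, giving distance $2$ and hence the block $2J$. This should yield
\begin{equation*}
\epsilon(G)=\begin{pmatrix} 2(J-I-A_0) & 0 & 0\\ 0 & 2(J-I-A_1) & 2J\\ 0 & 2J & 2(J-I-A_2)\end{pmatrix},
\end{equation*}
where $A_0,A_1,A_2$ are the adjacency matrices of $G_0,G_1,G_2$ and each $J$ has the appropriate order.

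For the bulk of the spectrum I would use the eigenvector-lifting device. If $X$ is an eigenvector of $A_0$ for $\lambda_i$ with $X\perp\mathbf{1}$ (so $JX=0$), then $(X,0,0)^{T}$ is an eigenvector of $\epsilon(G)$ for $-2(1+\lambda_i)$; letting $i=2,\dots,p_0$ gives item~(1). Lifting the eigenvectors of $A_1$ orthogonal to $\mathbf{1}$ as $(0,Y,0)^{T}$ gives item~(2), because $JY=0$ annihilates the coupling block $2J$; the symmetric choice $(0,0,W)^{T}$ with $A_2W=\gamma_l W$, $W\perp\mathbf{1}$, gives item~(3). These produce $(p_0-1)+(p_1-1)+(p_2-1)$ eigenvalues.

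The remaining three eigenvectors are constant on each of $V(G_0),V(G_1),V(G_2)$, spanning exactly the orthogonal complement of the vectors above. Since each block of $\epsilon(G)$ has constant row sums, the partition $\{V(G_0),V(G_1),V(G_2)\}$ is equitable, so the last three $\epsilon$-eigenvalues are the eigenvalues of the quotient matrix $F$ in item~(4), whose entries are the block row sums $2(p_0-1-r_0)$, $2(p_1-1-r_1)$, $2p_2$, $2p_1$, $2(p_2-1-r_2)$. The count $(p_0+p_1+p_2-3)+3=p_0+p_1+p_2$ then confirms that every eigenvalue has been accounted for. I expect the main obstacle to be the first step: carefully justifying that every eccentricity equals $2$, and in particular that the $V(G_0)$--$V(G_1)$ and $V(G_0)$--$V(G_2)$ blocks are forced to $0$ while the $V(G_1)$--$V(G_2)$ block is $2J$, since this is precisely where $\mathrm{diam}(G_1)\ge 2$ and the non-completeness of $G_0$ are used.
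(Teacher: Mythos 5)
Your proposal is correct and follows essentially the same route as the paper's proof: the identical block form for $\epsilon(G_{0}\vee(G_{1}\cup G_{2}))$, the same lifting of adjacency eigenvectors orthogonal to the all-ones vector to obtain $-2(1+\lambda_{i})$, $-2(1+\beta_{k})$, $-2(1+\gamma_{l})$, and the same appeal to the equitable quotient matrix for the last three eigenvalues, closed by the dimension count.

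The one place you supply detail the paper omits is precisely the step you flag as the main obstacle: justifying the block form. The paper asserts it ``by a proper labeling for the vertices''; your eccentricity analysis is the honest version of that step, and it exposes a genuine issue with the hypotheses. What you actually need is that $G_{0}$ is non-complete (equivalently, for a regular graph, $\mathrm{diam}(G_{0})\geq 2$), so that every vertex of $V(G_{0})$ has eccentricity $2$ in the join and the $V(G_{0})$--$V(G_{1})$ and $V(G_{0})$--$V(G_{2})$ blocks vanish. The stated hypothesis $\mathrm{diam}(G_{1})\geq 2$ does no work at all, since every vertex of $V(G_{1})$ has eccentricity $2$ automatically, being at distance $2$ from $V(G_{2})$. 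The theorem as literally stated is in fact false: take $G_{0}=K_{1}$, $G_{1}=C_{4}$, $G_{2}=K_{1}$, which satisfies every stated hypothesis. The vertex of $G_{0}$ then has eccentricity $1$, so the first block row of $\epsilon$ is $(0,\,J,\,J)$ rather than $(0,\,0,\,0)$; concretely $\mathrm{tr}(\epsilon^{2})=58$, while the eigenvalues predicted by the theorem ($2,-2,-2,0,1\pm\sqrt{17}$) have square sum $48$. So your importing of ``non-completeness of $G_{0}$'' is not a flaw in your argument but a silent correction of the statement: the hypothesis $\mathrm{diam}(G_{1})\geq 2$ is presumably a misprint for $\mathrm{diam}(G_{0})\geq 2$, the paper's own proof tacitly assumes exactly what you assume when it writes down the block form, and under that corrected hypothesis both your proof and the paper's are sound.
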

\begin{proof}
    By a proper labeling for the vertices of $G_{0}\vee (G_{1}\cup G_{2})$, we get
    \begin{center}
      $\epsilon( G_{0}\vee (G_{1}\cup G_{2}))=\begin{pmatrix}
          2(J-I-A_{0})&0 &0\\
          0&2(J-I-A_{1})&2J\\
          0&2J&2(J-I-A_{2})
        \end{pmatrix}$,
    \end{center}
    where $A_{j}$ denotes the adjacency matrix of $G_{j}$ for $j=0, 1, 2$.\\
    Let $X$ be an eigenvector of $A_{0}$ corresponding to the eigenvalue $\lambda_{i}$ (for $i=2,\ldots, p_{0}$), $Y$ be an eigenvector of $A_{1}$ corresponding to the eigenvalue $\beta_{k}$ (for $k=2,\ldots, p_{1}$), and $Z$ be an eigenvector of $A_{2}$ corresponding to the eigenvalue $\gamma_{l}$ (for $l=2,\ldots, p_{2}$). Then we have, 
        $\epsilon( G_{0}\vee (G_{1}\cup G_{2}))
        \begin{pmatrix}
            X\\
            0\\
            0
        \end{pmatrix}=
        -2(1+\lambda_{i})\begin{pmatrix}
            X\\
            0\\
            0
        \end{pmatrix},$
        $\epsilon( G_{0}\vee (G_{1}\cup G_{2}))
        \begin{pmatrix}
            0\\
            Y\\
            0
        \end{pmatrix}=
        -2(1+\beta_{k})\begin{pmatrix}
            0\\
            Y\\
            0
        \end{pmatrix},$
 and  
      $ \epsilon( G_{0}\vee (G_{1}\cup G_{2}))
        \begin{pmatrix}
            0\\
            0\\
            Z
        \end{pmatrix}=
        -2(1+\gamma_{l})\begin{pmatrix}
            0\\
            0\\
            Z
        \end{pmatrix}.$
The remaining $3$ eigenvalues of $\epsilon( G_{0}\vee (G_{1}\cup G_{2}))$ are given by its equitable quotient matrix, 
\begin{center}
             $F=\begin{pmatrix}
                 2(p_{0}-1-r_{0})&0&0\\
                 0&2(p_{1}-1-r_{1})&2p_{2}\\
                 0&2p_{1}&2(p_{2}-1-r_{2}
             \end{pmatrix}$.
         \end{center}  
\end{proof}
\begin{theorem}
    Let $G_{0}$, $G_{1}$ and $G_{2}$ be any three graphs. The following gives some new families of graphs having either a reducible or irreducible eccentricity matrix.
    \begin{enumerate}
        \item If $G_{0}$ is self -centered and $diam(G_{0})=2$, then $G_{0}\vee (G_{1}\cup G_{2})$ is reducible.
        \item If $G_{0}$ is not self-centered and $diam(G_{0})=2$, then $G_{0}\vee (G_{1}\cup G_{2})$ is irreducible.
        \item If $diam(G_{0})\geq 3$, then  $G_{0}\vee (G_{1}\cup G_{2})$ is reducible.
    \end{enumerate}
\end{theorem}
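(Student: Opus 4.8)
The plan is to invoke Theorem \ref{irreducibilitytheorem}, which reduces the problem to deciding whether the eccentric graph $G^{e}$ is connected, where $G = G_{0}\vee(G_{1}\cup G_{2})$. So the first step is to pin down all distances and eccentricities in $G$. Since every vertex of $G_{0}$ is joined to every vertex of $G_{1}\cup G_{2}$, any two vertices of $G$ lie at distance at most $2$: two $G_{0}$-vertices are joined directly or through any vertex of $G_{1}\cup G_{2}$; a vertex of $G_{1}$ and a vertex of $G_{2}$ are at distance exactly $2$ (via $G_{0}$); and every $G_{0}$-vertex is at distance $1$ from all of $G_{1}\cup G_{2}$. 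Consequently every vertex in $V(G_{1})\cup V(G_{2})$ has eccentricity $2$ in $G$ (the opposite component is always at distance $2$), while a vertex $u\in V(G_{0})$ satisfies $e_{G}(u)=1$ precisely when $u$ is adjacent to every other vertex of $G$, i.e. when $u$ is a universal vertex of $G_{0}$; otherwise $e_{G}(u)=2$.

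The second step is the key observation that connectedness of $G^{e}$ hinges solely on whether $G_{0}$ possesses a universal vertex. No pair consisting of a $G_{0}$-vertex and a vertex of $V(G_{1})\cup V(G_{2})$ can contribute an edge of $G^{e}$ when both have eccentricity $2$: such a pair is adjacent in $G$, so $d=1$, whereas the defining condition requires $d=\min\{2,2\}=2$. Hence, if every $G_{0}$-vertex has eccentricity $2$, then $V(G_{0})$ and $V(G_{1})\cup V(G_{2})$ lie in different components of $G^{e}$, because the only pairs joining the two sets are distance-$1$ join edges, which are absent from $G^{e}$.

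I would then dispatch the three cases. For items $1$ and $3$ I show that no universal vertex of $G_{0}$ exists. If $G_{0}$ is self-centered with $diam(G_{0})=2$, then $r(G_{0})=2$, so every vertex has $G_{0}$-eccentricity $2$ and hence a non-neighbour in $G_{0}$; thus $e_{G}(u)=2$ for all $u\in V(G_{0})$. If $diam(G_{0})\geq 3$, a universal vertex of $G_{0}$ would make all pairwise $G_{0}$-distances at most $2$, contradicting the diameter, so again no universal vertex exists and every $G_{0}$-vertex has eccentricity $2$ in $G$. In both situations the second step forces $V(G_{0})$ and $V(G_{1})\cup V(G_{2})$ into distinct components of $G^{e}$, so $G^{e}$ is disconnected and $\epsilon(G)$ is reducible. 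For item $2$, $G_{0}$ is not self-centered with $diam(G_{0})=2$, whence $r(G_{0})=1$ and $G_{0}$ has a universal vertex $u_{0}$; being universal in $G_{0}$ and joined to all of $G_{1}\cup G_{2}$, the vertex $u_{0}$ is adjacent to every other vertex of $G$ and has $e_{G}(u_{0})=1$. Then for every $x\neq u_{0}$ we have $d(u_{0},x)=1=\min\{e_{G}(u_{0}),e_{G}(x)\}$, so $u_{0}$ is adjacent to all vertices of $G^{e}$, making $G^{e}$ connected and $\epsilon(G)$ irreducible.

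The main obstacle is getting the eccentricity bookkeeping exactly right, and in particular recognizing that item $3$ collapses to the same mechanism as item $1$: although $G_{0}$ has diameter at least $3$ in isolation, inside the join all of its vertices sit at mutual distance at most $2$, so the only relevant feature is the presence or absence of a universal vertex in $G_{0}$. Once this unifying observation (connectedness of $G^{e}$ $\Leftrightarrow$ $r(G_{0})=1$) is isolated, the remainder is routine verification via Theorem \ref{irreducibilitytheorem}.
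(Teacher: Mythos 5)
Your proposal is correct and follows essentially the same route as the paper's proof: both invoke Theorem \ref{irreducibilitytheorem}, compute that all vertices of $V(G_{1})\cup V(G_{2})$ and all non-universal vertices of $V(G_{0})$ have eccentricity $2$ in the join, observe that the join edges (distance $1$) therefore cannot appear in $G^{e}$ unless one endpoint has eccentricity $1$, and in the irreducible case use the eccentricity-$1$ vertex of $G_{0}$ as a hub adjacent to everything in $G^{e}$. Your only departure is cosmetic: you package the paper's case-by-case eccentricity bookkeeping into the single criterion ``$G^{e}$ is connected if and only if $G_{0}$ has a universal vertex,'' which unifies items $1$ and $3$ but does not change the underlying argument.
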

\begin{proof}
    Let $G=G_{0}\vee (G_{1}\cup G_{2})$, $V(G)=V(G_{0})\cup V(G_{1})\cup V(G_{2})$ and let $u_{i}$, $v_{j}$, $w_{k}$ be the  vertices in $V(G_{0})$, $V(G_{1})$, and  $V(G_{2})$ respectively. 
    In $G$, \begin{align*}
        e(v_{j})=2, &\text{ for every } v_{j} \in V(G_{1})\\
        e(w_{k})=2, &\text{ for every } w_{k}\in V(G_{2})
        \end{align*}
and $d(v_{j},w_{k})=2.$
Therefore, every vertex in $V(G_{1})$ is adjacent to every vertex of $V(G_{2})$ in $G^{e}$.
\begin{enumerate}
    \item  If $G_{0}$ is self-centered and $diam(G_{0})=2$, clearly in $G$ for each $u_{i}$ in $V(G_{0})$ , $e(u_{i})=2$. But         \begin{align*}
   d(u_{i},v_{j})&=1\neq min\{e(u_{i}), e(v_{j})\} \\
    d(u_{i},w_{k})&=1\neq min\{e(u_{i}), e(w_{k})\}.
    \end{align*}
    Therefore, 
    $G^{e}$ is a disconnected graph. Thus, the result follows from Theorem \ref{irreducibilitytheorem}.
    \item If $G_{0}$ is not self-centered and $diam(G)=2$, then there exist at least one $u_{l}$ in $V(G_{0})$ such that $e(u_{l})=1$. 
    Then we have 
    \begin{align*}
        d(u_{l}, u_{i})&=1=min\{e(u_{l}),e(u_{i})\},\text{ for all } u_{i} \text{ in } V(G_{0}),\\
        d(u_{l}, v_{j})&=1=min\{e(u_{l}),e(v_{j})\},\text{ for all } v_{j} \text{ in } V(G_{1}),\\
        d(u_{l}, w_{k})&=1=min\{e(u_{l}),e(w_{k})\},\text{ for all } w_{k} \text{ in } V(G_{2}).
    \end{align*}
    Thus, the eccentric graph $G^{e}$ is connected. Therefore, $\epsilon(G)$ is irreducible by Theorem \ref{irreducibilitytheorem}.
    
   \item  If $diam(G_{0})\geq 3,$ then in $G$, $e(u_{i})=2$ , for every $u_{i}$ in $V(G_{0})$. So in $G^{e}$, no vertex in $V(G_{0})$ is adjacent to any vertex in $V(G_{1})$ and $V(G_{2})$. Thus, $G^{e}$ is disconnected. Hence,  using 
   Theorem \ref{irreducibilitytheorem}, $\epsilon(G)$ is reducible.
\end{enumerate}
\end{proof}
\begin{theorem}\label{subdivisionvertexunion}
    Let $G_{j}$ be an $r_{j}-$regular ($r_{0}\geq 2$), $(p_{j}, q_{j})$ graph, $j=0,1,2$ and let $\{r_{0},\lambda_{2},\ldots, \lambda_{p_{0}}\}$,
    $\{r_{1}, \beta_{2}, \beta_{3},\ldots, \beta_{p_{1}}\}$
    and  $\{r_{2},\gamma_{2},\gamma_{3},\ldots, \gamma_{p_{2}}\}$ be the $A-$ eigenvalues of $G_{0}$, $G_{1}$ and $G_{2}$ respectively. Then $\epsilon-$ eigenvalues of $G_{0}\veedot (G_{1}\cup G_{2})$ consists of\begin{enumerate}
        \item $4$  with multiplicity $q_{0}-p_{0}.$
        \item $-2(1+\beta_{k}), k=2,\ldots,  p_{1}.$
        \item $-2(1+\gamma_{l}), l=2,\ldots,  p_{2}.$
        \item $-3t_{1}-4-4(\lambda_{i}+r_{0}-2),i=2,\ldots, p_{0}, t_{1}=\frac{2(1-(\lambda_{i}+r_{0}))+\sqrt{4(1-(\lambda_{i}+r_{0}))^{2}+9(\lambda_{i}+r_{0})}}{3}.$
        \item $-3t_{2}-4-4(\lambda_{i}+r_{0}-2),i=2,\ldots,  p_{0}, t_{2}=\frac{2(1-(\lambda_{i}+r_{0}))-\sqrt{4(1-(\lambda_{i}+r_{0}))^{2}+9(\lambda_{i}+r_{0})}}{3}.$
        \item $4$ eigenvalues of the equitable  quotient matrix of $\epsilon(G_{0}\veedot (G_{1}\cup G_{2}))$, \begin{center}
        $\begin{pmatrix}
            0&3(q_{0}-r_{0})&0&0\\
            3(p_{0}-2)&4(q_{0}-2r_{0}+1)&2p_{1}&2p_{2}\\
            0&2q_{0}&2(p_{1}-r_{1}-1)&2p_{2}\\
            0&2q_{0}&2p_{1}&2(p_{2}-r_{2}-1)\\
        \end{pmatrix}.$
        \end{center}
    \end{enumerate}
\end{theorem}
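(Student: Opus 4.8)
The plan is to treat $G_0\veedot(G_1\cup G_2)$ as the subdivision-vertex join of $G_0$ with the (disconnected) graph $G_1\cup G_2$, so the whole argument runs parallel to Theorem \ref{subdivisionvertex}, with one new feature coming from the disjoint union. First I would fix the partition $V(G_0)\cup I(G_0)\cup V(G_1)\cup V(G_2)$ and compute the eccentric distances. Because $G_0$ is $r_0$-regular with $r_0\ge 2$ it is never a star, so every vertex of $V(G_0)$ has eccentricity $3$, every subdivision vertex has eccentricity in $\{3,4\}$, and every vertex of $V(G_1)\cup V(G_2)$ has eccentricity $2$. The key new observation is that although $G_1\cup G_2$ is disconnected, every vertex of $V(G_1)\cup V(G_2)$ is joined to all of $V(G_0)$, so any two of them lie at distance at most $2$, and a $V(G_1)$-vertex and a $V(G_2)$-vertex lie at distance exactly $2=\min\{e,e\}$. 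This yields the block form
\begin{equation*}
\epsilon\bigl(G_0\veedot(G_1\cup G_2)\bigr)=\begin{pmatrix} 0 & 3(J-R) & 0 & 0\\ 3(J-R^{T}) & 4(J-I-B) & 2J & 2J\\ 0 & 2J & 2(J-I-A_1) & 2J\\ 0 & 2J & 2J & 2(J-I-A_2)\end{pmatrix},
\end{equation*}
where $R$ is the incidence matrix of $G_0$, $B=A(L(G_0))$, and $A_1,A_2$ are the adjacency matrices of $G_1,G_2$. The only delicate entry in this set-up is the $I(G_0)\times I(G_0)$ block: one must reconcile the eccentricity range $\{3,4\}$ with the uniform coefficient $4$, noting that two subdivision vertices are at distance $4$ precisely when their edges are disjoint (nonadjacent in $L(G_0)$), and that disjointness forces both eccentricities to equal $4$; this is exactly the verification already implicit in Theorem \ref{subdivisionvertex}.

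Next I would exhibit the three explicit eigenvector families. Taking $Z$ an eigenvector of $B$ for $-2$, Lemma \ref{Beigenvalue} gives $RZ=0$, hence $\mathbf{1}^{T}Z=0$, and $(0,Z,0,0)^{T}$ returns the eigenvalue $4$ with multiplicity $q_0-p_0$. Taking $Y$ an eigenvector of $A_1$ for $\beta_k\neq r_1$ (so $\mathbf{1}^{T}Y=0$), the vector $(0,0,Y,0)^{T}$ is annihilated by every off-diagonal $J$-block and returns $-2(1+\beta_k)$; symmetrically $(0,0,0,W)^{T}$ with $A_2W=\gamma_l W$ returns $-2(1+\gamma_l)$. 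Each of these reduces to the orthogonality identities $JZ=0,\ RZ=0,\ JY=0,\ JW=0$, so they are routine.

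The main work, and the step I expect to be the principal obstacle, is the coupled $V(G_0)$–$I(G_0)$ block. For $X$ an eigenvector of $A_0$ with $\lambda_i\neq r_0$, Lemma \ref{thelinegraph} gives that $R^{T}X$ is an eigenvector of $B$ for $\lambda_i+r_0-2$ and that $X,R^{T}X$ are both orthogonal to the all-ones vector. I would test $\phi=(tX,\,R^{T}X,\,0,\,0)^{T}$: using $RR^{T}X=(\lambda_i+r_0)X$, $JX=0$ and $JR^{T}X=0$, the lower two blocks vanish and the top two produce the scalar system $-3(\lambda_i+r_0)=\mu t$ and $-3t-4(\lambda_i+r_0-1)=\mu$. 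Eliminating $\mu$ gives $3t^{2}+4\bigl((\lambda_i+r_0)-1\bigr)t-3(\lambda_i+r_0)=0$, whose two roots are the stated $t_1,t_2$; the associated eigenvalue is $\mu=-3t+4-4(\lambda_i+r_0)$, which is precisely the expression $-3t-4-4(\lambda_i+r_0-2)$ listed in items (4)–(5). Care is needed here to check that the discriminant $4\bigl((\lambda_i+r_0)-1\bigr)^2+9(\lambda_i+r_0)$ arising from the quadratic coincides with the form $4\bigl(1-(\lambda_i+r_0)\bigr)^2+9(\lambda_i+r_0)$ written in the theorem (they agree since the squared term is invariant under sign change), and that this interaction through $R,R^{T},B$ is the only place where the subdivision structure genuinely enters.

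Finally, the four cells form an equitable partition, so the remaining $4$ eigenvalues are those of the quotient matrix $F$ whose $(a,b)$ entry is the common block row sum; using that $R$ has row sums $r_0$ and column sums $2$ and that $L(G_0)$ is $(2r_0-2)$-regular reproduces the stated $F$. To close the argument I would verify completeness by counting: $(q_0-p_0)+(p_1-1)+(p_2-1)+2(p_0-1)+4=p_0+q_0+p_1+p_2$, which equals the order of $G_0\veedot(G_1\cup G_2)$, and note that the four eigenvector families together with the lifts of the quotient eigenvectors are mutually orthogonal (each lives in the $\mathbf{1}$-complement of its coordinate blocks), hence form a complete eigenbasis.
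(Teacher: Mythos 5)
Your proposal is correct and follows essentially the same route as the paper's proof: the identical block form of $\epsilon(G_0\veedot(G_1\cup G_2))$, the same three families of eigenvectors supported on single blocks, the same coupled vector $(tX, R^{T}X,0,0)^{T}$ leading to the quadratic $3t^{2}+4((\lambda_i+r_0)-1)t-3(\lambda_i+r_0)=0$, and the same equitable quotient matrix for the remaining four eigenvalues. The only additions beyond the paper's argument are your explicit justification of the eccentricity entries and the final multiplicity count, both of which the paper leaves implicit.
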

\begin{proof}
    By a proper ordering of the vertices of $G_{0}\veedot (G_{1}\cup G_{2})$ ,\begin{equation*}
         \epsilon(G_{0}\veedot (G_{1}\cup G_{2}))=\begin{pmatrix}
               0 & 3(J-R_{0})& 0&0\\
        3(J-R_{0}^{T}) & 4(J-I-B_{0})& 2J&2J\\
        0& 2J& 2(J-I-A_{1})&2J\\
        0&2J&2J&2(J-I-A_{2})
         \end{pmatrix},
    \end{equation*}
    where $R_{0}$ represents the incidence matrix of $G_{0}$ and $B_{0}$, $A_{1}$, $A_{2}$ represents the  adjacency matrix of $L(G_{0})$, $G_{1}$, $G_{2}$ respectively.
    Let $W$ be an eigenvector of $A_{1}$ corresponding to the eigenvalue $\beta_{k}$ (for $k=2, \ldots, p_{1}$), $V$ be an eigenvector of $A_{2}$ corresponding to the eigenvalue $\gamma_{l}$ (for $l=2, \ldots, p_{2}$), and $X$ be the eigenvector of $B_{0}$ corresponding to the eigenvalues $-2$ (with multiplicity $q_{0}-p_{0}$). Then we have
     \begin{align*}
       \begin{pmatrix}
               0 & 3(J-R_{0})& 0&0\\
        3(J-R_{0}^{T}) & 4(J-I-B_{0})& 2J&2J\\
        0& 2J& 2(J-I-A_{1})&2J\\
        0&2J&2J&2(J-I-A_{2})
         \end{pmatrix}
         \begin{pmatrix}
             0\\
             0\\
             W\\
             0
         \end{pmatrix}&=-2(1+\beta_{k})\begin{pmatrix}
             0\\
             0\\
             W\\
             0
         \end{pmatrix},  \\ 
         \begin{pmatrix}
               0 & 3(J-R_{0})& 0&0\\
        3(J-R_{0}^{T}) & 4(J-I-B_{0})& 2J&2J\\
        0& 2J& 2(J-I-A_{1})&2J\\
        0&2J&2J&2(J-I-A_{2})
         \end{pmatrix}
         \begin{pmatrix}
             0\\
             0\\
             0\\
             V
         \end{pmatrix}&=-2(1+\gamma_{l})\begin{pmatrix}
             0\\
             0\\
             0\\
             V
         \end{pmatrix},\\
         \begin{pmatrix}
               0 & 3(J-R_{0})& 0&0\\
        3(J-R_{0}^{T}) & 4(J-I-B_{0})& 2J&2J\\
        0& 2J& 2(J-I-A_{1})&2J\\
        0&2J&2J&2(J-I-A_{2})
         \end{pmatrix}
         \begin{pmatrix}
             0\\
             X\\
             0\\
             0
         \end{pmatrix}&=4\begin{pmatrix}
             0\\
             X\\
             0\\
             0
         \end{pmatrix}. 
     \end{align*}

    Let $U$ be an eigenvector corresponding to the $A-$ eigenvalue $\lambda_{i}$ (for $i=2,\ldots, p_{0}$) of $G_{0}$.
    Using Lemma \ref{thelinegraph}, we get $R_{0}^{T} U$ is an eigenvector of $B_{0}$ corresponding to the eigenvalue $\lambda_{i}+r_{0}-2.$ Thus, the vectors $U$ and $R_{0}^{T}U$ are orthogonal to the vector $J_{p_{0}\times 1}.$\\ 
    Now, consider the vector $\Tilde{\phi}=\begin{pmatrix}
             tU\\
             R_{0}^{T}U\\
             0\\
             0
         \end{pmatrix}$.
         Next, we determine under what condition $\Tilde{\phi}$ is an eigenvector of  $\epsilon(G_{0}\veedot (G_{1}\cup G_{2}))$. If $\tilde{\mu}$ is an eigenvalue of $\epsilon(G_{0}\veedot (G_{1}\cup G_{2}))$ such that $\epsilon(G_{0}\veedot (G_{1}\cup G_{2}))\Tilde{\phi}=\Tilde{\mu} \Tilde{\phi}$, then
    \begin{equation*}
        \begin{pmatrix}
               0 & 3(J-R_{0})& 0&0\\
        3(J-R_{0}^{T}) & 4(J-I-B_{0})& 2J&2J\\
        0& 2J& 2(J-I-A_{1})&2J\\
        0&2J&2J&2(J-I-A_{2})
         \end{pmatrix}
         \begin{pmatrix}
             tU\\
             R_{0}^{T}U\\
             0\\
             0
         \end{pmatrix}=\Tilde{\mu}\begin{pmatrix}
             tU\\
             R_{0}^{T}U\\
             0\\
             0
         \end{pmatrix}.       
    \end{equation*} 
  Therefore,
  \begin{align*}
     -3(\lambda_{i}+r_{0})&=\Tilde{\mu} t\\
     -3t-4-4(\lambda_{i}+r_{0}-2)&=\Tilde{\mu}.
 \end{align*}   
 From this we will get,
 \begin{equation*}
 \Tilde{\mu}=-3t_{1}-4-4(\lambda_{i}+r_{0}-2),i=2,\ldots,  p_{0},
 \text{ where }
 t_{1}=\frac{2(1-(\lambda_{i}+r_{0}))+\sqrt{4(1-(\lambda_{i}+r_{0}))^{2}+9(\lambda_{i}+r_{0})}}{3} 
 \end{equation*}
 and\begin{equation*} 
  \Tilde{\mu}=  -3t_{2}-4-4(\lambda_{i}+r_{0}-2),i=2,\ldots p_{0},
     \text{ where }\\ t_{2}=\frac{2(1-(\lambda_{i}+r_{0}))-\sqrt{4(1-(\lambda_{i}+r_{0}))^{2}+9(\lambda_{i}+r_{0})}}{3}.
 \end{equation*}
 And the remaining $4$ eigenvalues of $\epsilon(G_{0}\veedot (G_{1}\cup G_{2}))$ are given by its equitable quotient matrix, 
 \begin{center}
     $F=\begin{pmatrix}
            0&3(q_{0}-r_{0})&0&0\\
            3(p_{0}-2)&4(q_{0}-2r_{0}+1)&2p_{1}&2p_{2}\\
            0&2q_{0}&2(p_{1}-r_{1}-1)&2p_{2}\\
            0&2q_{0}&2p_{1}&2(p_{2}-r_{2}-1)\\
        \end{pmatrix}$.
 \end{center}
\end{proof}
The following remarks describes a family of graphs with an irreducible eccentricity matrix.
\begin{remark}
    For any three connected graphs $G_{0}$, $G_{1}$ and $G_{2}$ the matrix $\epsilon(G_{0}\veedot (G_{1}\cup G_{2}))$ is  irreducible, unless $G_{0}=K_{1,1}$. 
\end{remark}
\begin{remark}
    For any connected graphs $G_{i}$,  $i=0, 1, \ldots, p$, the matrix  $\epsilon(G_{0}\veedot (\cup^{p}_{l=1}G_{l})$ is irreducible, unless $G_{0}=K_{1,1}$.
\end{remark}


\begin{theorem}\label{subdivisionedgeunion}
     Let $G_{j}$ be an $r_{j}-$regular ($r_{0}\geq2$),  $(p_{j}, q_{j})$ graph, $j=0,1,2$, $\{r_{0},\lambda_{2},\ldots, \lambda_{p_{0}}\}$, $\{r_{1}, \beta_{2},$ $ \beta_{3},\ldots, \beta_{p_{1}}\}$ and  $\{r_{2}, \gamma_{2}, \gamma_{3}, \ldots, \gamma_{p_{2}}\}$ are the $A-$ eigenvalues of $G_{0}$, $G_{1}$ and $G_{2}$ respectively. Then $\epsilon-$ eigenvalues of $G_{0}\veebar (G_{1}\cup G_{2})$ consists of\begin{enumerate}
     \item $0$ with multiplicity $q_{0}-p_{0}.$
     \item $-2(1+\beta_{k}),$ $ k=2,\ldots,p_{1}.$
     \item $-2(1+\gamma_{l}),$ $ l=2,\ldots,p_{2}.$
     \item $-2(1+\lambda_{i})\pm \sqrt{4(1+\lambda_{i})^{2}+9(\lambda_{i}+r_{0})},$ $ i=2,\ldots,p_{0}.$
     \item $4$ eigenvalues of the equitable quotient matrix of $\epsilon(G_{0}\veebar (G_{1}\cup G_{2}))$, \begin{center} 
     $\begin{pmatrix}
         4(p_{0}-1-r_{0})&3(q_{0}-r_{0})&2p_{1}&2p_{2}\\
         3(p_{0}-2)&0&0&0\\
         2p_{0}&0&2(p_{1}-1-r_{1})&2p_{2}\\
         2p_{0}&0&2p_{1}&2(p_{2}-1-r_{2})
     \end{pmatrix}.$
     \end{center}
     \end{enumerate}
\end{theorem}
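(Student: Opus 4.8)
The plan is to mirror the computation of Theorem~\ref{subdivisionedge}, now carried out over the four vertex classes $V(G_0)$, $I(G_0)$, $V(G_1)$ and $V(G_2)$ of $G=G_0\veebar(G_1\cup G_2)$. First I would pin down the distances and eccentricities. Since each subdivision vertex of $I(G_0)$ is adjacent to all of $V(G_1)\cup V(G_2)$ while the vertices of $V(G_0)$ are adjacent only to subdivision vertices, one checks that $e(w)=2$ for every $w\in V(G_1)\cup V(G_2)$, that $e(u)=3$ for every $u\in I(G_0)$ (a non-incident vertex of $V(G_0)$ sits at distance $3$, using $p_0\ge 3$), and that $e(v)=4$ for every $v\in V(G_0)$ when $G_0$ is non-complete, the value $4$ being realised by any non-neighbour of $v$ in $G_0$ (which exists by $r_0$-regularity with $r_0\ge 2$); when $G_0$ is complete, $e(v)=3$ and the block $4(J-I-A_0)$ vanishes anyway. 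Reading off, for each pair of classes, whether $d(\cdot,\cdot)$ equals the minimum of the two eccentricities yields the block form
\begin{equation*}
\epsilon\bigl(G_0\veebar(G_1\cup G_2)\bigr)=\begin{pmatrix}
4(J-I-A_0)&3(J-R_0)&2J&2J\\
3(J-R_0^{T})&0&0&0\\
2J&0&2(J-I-A_1)&2J\\
2J&0&2J&2(J-I-A_2)
\end{pmatrix},
\end{equation*}
where $R_0$ is the incidence matrix of $G_0$ and $A_1,A_2$ are the adjacency matrices of $G_1,G_2$. Establishing this form, in particular justifying $e(v)=4$, is the step I expect to demand the most care; everything afterward is algebra on a known matrix.

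With the matrix in hand the extraction is routine. For item~(1) I would take an eigenvector $Z$ of $B_0=A(L(G_0))$ for $-2$; by Lemma~\ref{Beigenvalue} this gives $R_0Z=0$, and orthogonality to the Perron vector of $B_0$ gives $JZ=0$, so the lift $(0,Z,0,0)^{T}$ is annihilated by $\epsilon(G)$, producing the eigenvalue $0$ with multiplicity $q_0-p_0$. For items~(2) and~(3) I would take eigenvectors $W$ of $A_1$ and $V$ of $A_2$ orthogonal to the all-ones vector and lift them to $(0,0,W,0)^{T}$ and $(0,0,0,V)^{T}$; the off-diagonal $2J$ blocks kill the cross terms because $JW=JV=0$, leaving the eigenvalues $-2(1+\beta_k)$ and $-2(1+\gamma_l)$.

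The interesting eigenvalues, item~(4), arise from the $V(G_0)$--$I(G_0)$ coupling. Taking $X$ an eigenvector of $A_0$ for $\lambda_i$ with $i\ge 2$ (hence $JX=0$), Lemma~\ref{thelinegraph} shows $R_0^{T}X$ is an eigenvector of $B_0$ for $\lambda_i+r_0-2$, and I would test the trial vector $(tX,\,R_0^{T}X,\,0,\,0)^{T}$. Using $R_0R_0^{T}=A_0+r_0I$, $JX=0$ and $JR_0^{T}X=r_0JX=0$, the eigen-equation collapses to the two scalar relations $-4t(1+\lambda_i)-3(\lambda_i+r_0)=\mu^{*}t$ and $-3t=\mu^{*}$, which combine into $3t^{2}-4(1+\lambda_i)t-3(\lambda_i+r_0)=0$; the two roots give the pair $-2(1+\lambda_i)\pm\sqrt{4(1+\lambda_i)^{2}+9(\lambda_i+r_0)}$, accounting for $2(p_0-1)$ eigenvalues.

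Finally I would count. Items~(1)--(4) supply $(q_0-p_0)+(p_1-1)+(p_2-1)+2(p_0-1)=p_0+q_0+p_1+p_2-4$ eigenvalues, i.e.\ all but four. Every eigenvector produced above is orthogonal to the characteristic vectors of the four classes (each involves vectors with $JX=JZ=JW=JV=0$), and the partition into $V(G_0),I(G_0),V(G_1),V(G_2)$ is equitable for $\epsilon(G)$ since each block has constant row sums. Hence the remaining four eigenvalues are exactly those of the quotient matrix $F$ whose entries are the recorded block row sums, completing the spectrum. The sole genuine obstacle is the distance bookkeeping of the first step; the remainder is a direct transcription of the single-graph computation in Theorem~\ref{subdivisionedge}.
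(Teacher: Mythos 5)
Your proposal is correct and takes essentially the same route as the paper, whose proof of this theorem is simply the statement that it is "similar to Theorem \ref{subdivisionvertexunion}": namely, write the four-block eccentricity matrix, lift eigenvectors of $B_{0}$, $A_{1}$, $A_{2}$ into padded vectors, handle the $V(G_{0})$--$I(G_{0})$ coupling via the trial vector $(tX,\,R_{0}^{T}X,\,0,\,0)^{T}$ and the quadratic $3t^{2}-4(1+\lambda_{i})t-3(\lambda_{i}+r_{0})=0$, and assign the remaining four eigenvalues to the equitable quotient matrix. Your explicit eccentricity bookkeeping (including the complete-$G_{0}$ case) and the final dimension count are details the paper leaves implicit, so your write-up is, if anything, more complete than the paper's own.
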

\begin{proof}
    Proof is similar to Theorem\ref{subdivisionvertexunion}.
\end{proof}
\begin{remark}
    For any three connected graphs $G_{0}$, $G_{1}$ and $G_{2}$ the matrix $\epsilon(G_{0}\veebar (G_{1}\cup G_{2}))$ is  irreducible. 
    This gives a new family of graphs having irreducible eccentricity matrix. 
\end{remark}
\begin{remark}
     For any connected graphs $G_{i}$,  $i=0, 1, \ldots, p$, the matrix  $\epsilon(G_{0}\veebar (\displaystyle\cup^{p}_{l=1} G_{l})$ is irreducible.
\end{remark}

\section{Applications}
This section will explore some of the applications of the results discussed in sections 3 and 4.

\subsection{ Eccentricity equienergetic graphs}
The construction of non cospectral equienergetic graphs is a significant problem in spectral graph theory. In \cite{mahato2023spectral},
Mahato et al. constructed a pair of non $\epsilon-$cospectral $\epsilon-$equienergetic graphs for every $p\geq 5.$  
Motivated by this, in this section we will discuss the construction some non $\epsilon-$cospectral $\epsilon-$equienergetic graphs.
\begin{theorem}
    For every $t\geq 3$, there exist a pair of $non$ $\epsilon-$ cospectral $\epsilon-$equienergetic graphs on $12t$ vertices. 
\end{theorem}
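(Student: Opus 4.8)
The plan is to build the required pair by feeding the classical iterated line graph (Indulal--Walikar) trick into the join formula of Theorem~\ref{GjoinG}. First I would invoke Lemma~\ref{cubicnoncospectral} to fix, for each $t\geq 3$, a pair of connected non-cospectral cubic graphs $H_1,H_2$ on $2t$ vertices, with $A$-spectra $\{3,\lambda_2^{(s)},\dots,\lambda_{2t}^{(s)}\}$ for $s=1,2$. Passing to the second iterated line graph, Lemma~\ref{2linegraph} (with $r=3$, $p=2t$) shows each $L^2(H_s)$ is a $6$-regular graph on $6t$ vertices with
\[
spec\big(L^2(H_s)\big)=\Big\{\,6,\ \{\lambda_i^{(s)}+3\}_{i=2}^{2t},\ 0^{(t)},\ (-2)^{(3t)}\,\Big\}.
\]
Since $H_1,H_2$ share the simple top eigenvalue $3$, non-cospectrality forces $\{\lambda_i^{(1)}\}_{i\geq2}\neq\{\lambda_i^{(2)}\}_{i\geq2}$ as multisets; shifting by $3$ and adjoining the common blocks $0^{(t)}$ and $(-2)^{(3t)}$ preserves this inequality, because multiset union is cancellative. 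Hence $L^2(H_1)$ and $L^2(H_2)$ are non-cospectral, $6$-regular and (being $6$-regular on $6t\geq 18$ vertices) non-complete, so they satisfy the hypotheses of Theorem~\ref{GjoinG}.

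Next I would set $\Gamma_s=L^2(H_s)\vee L^2(H_s)$, each on $2\cdot 6t=12t$ vertices, and read off its $\epsilon$-spectrum from Theorem~\ref{GjoinG} with $p=6t$ and $r=6$. Writing $\mu_1^{(s)}=6>\mu_2^{(s)}\geq\cdots$ for the eigenvalues of $L^2(H_s)$, this gives
\[
spec_{\epsilon}(\Gamma_s)=\Big\{\,\big(2(6t-7)\big)^{(2)},\ \big(-2(\mu_j^{(s)}+1)\big)^{(2)}_{\,j\geq 2}\,\Big\}.
\]
The fixed entry $2(6t-7)$ is common to $s=1,2$, while the block $\{-2(\mu_j^{(s)}+1)\}_{j\geq2}$ is determined by the non-principal spectrum of $L^2(H_s)$, which differs between $s=1$ and $s=2$ by the previous paragraph. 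Therefore $\Gamma_1$ and $\Gamma_2$ are non $\epsilon$-cospectral.

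Finally I would verify $\epsilon$-equienergy by a uniform sign analysis. Using the spectrum above, $E_{\epsilon}(\Gamma_s)=2\,|2(6t-7)|+2\sum_{j\geq2}|\mu_j^{(s)}+1|$. For $t\geq3$ we have $6t-7>0$. The non-principal eigenvalues of $L^2(H_s)$ are the values $\lambda_i^{(s)}+3$ (which lie in $[0,6)$, so $\lambda_i^{(s)}+4\geq1>0$), together with $0$ with multiplicity $t$ and $-2$ with multiplicity $3t$; thus
\[
\sum_{j\geq2}\big|\mu_j^{(s)}+1\big|=\sum_{i=2}^{2t}\big(\lambda_i^{(s)}+4\big)+t\cdot1+3t\cdot1=(-3)+4(2t-1)+4t=12t-7,
\]
where I used the trace identity $\sum_{i=2}^{2t}\lambda_i^{(s)}=-3$ valid for any cubic graph on $2t$ vertices. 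Consequently $E_{\epsilon}(\Gamma_s)=4(6t-7)+4(12t-7)=72t-56$, which is independent of $s$, so $\Gamma_1$ and $\Gamma_2$ are $\epsilon$-equienergetic. The main point requiring care is exactly this sign analysis: the equienergy hinges on every absolute value opening up in a way that does not depend on the particular cubic graph, which is guaranteed by the bound $-3\leq\lambda_i^{(s)}\leq 3$ and the trace identity; the rest (vertex count, regularity/non-completeness needed to apply Theorem~\ref{GjoinG}, and non-cospectrality) is bookkeeping.
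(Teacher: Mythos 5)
Your proposal is correct and follows essentially the same route as the paper: both take the non-cospectral cubic graphs of Lemma \ref{cubicnoncospectral}, pass to the second iterated line graph via Lemma \ref{2linegraph}, and apply Theorem \ref{GjoinG} to $L^{2}(G_{s})\vee L^{2}(G_{s})$, arriving at the same $\epsilon$-spectra and the common energy $72t-56$; your explicit sign analysis, the bound $\lambda_i^{(s)}+4\geq 1$, and the trace identity simply spell out what the paper leaves implicit. The only blemish is a factor-of-two slip in your intermediate display $E_{\epsilon}(\Gamma_s)=2\,|2(6t-7)|+2\sum_{j\geq2}|\mu_j^{(s)}+1|$ (the relevant eigenvalues are $-2(\mu_j^{(s)}+1)$, each of multiplicity two, so this term should read $4\sum_{j\geq2}|\mu_j^{(s)}+1|$), but your final evaluation $4(6t-7)+4(12t-7)=72t-56$ already uses the correct factor, so the conclusion stands.
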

Proof. Let $G_{1}$ and $G_{2}$ be  two non-cospectral cubic graphs on $2t$  vertices as in Lemma \ref{cubicnoncospectral} with $spec (G_{1})=\{3,\lambda_{2},\ldots, \lambda_{2t}\}$ and $spec (G_{2})=\{3, \beta_{2},\ldots, \beta_{2t}\}$.
Using Lemma \ref{2linegraph},
\begin{equation*}
    spec L^{2}(G_{1})=\begin{pmatrix}
                    6 &\lambda_{2}+3 &\cdots& \lambda_{2t}+3& 0 &-2\\
                    1& 1 &\cdots& 1 &t &3t
                  \end{pmatrix}
\end{equation*}
and \begin{equation*}
     spec L^{2}(G_{2})=\begin{pmatrix}
                    6 &\beta_{2}+3 &\cdots& \beta_{2t}+3& 0 &-2\\
                    1& 1 &\cdots& 1 &t &3t
                  \end{pmatrix}.
\end{equation*}
Using  Theorem \ref{GjoinG} we have \begin{align*}
    spec_{\epsilon}(L^{2}(G_{1})\vee L^{2}(G_{1}) &=\begin{pmatrix}
        2(6t-7)& -2(\lambda_{i}+4) & -2 & 2\\\
        2&2&2t&6t
                                                   \end{pmatrix}, i=2, \ldots, 2t.\\
 spec_{\epsilon}(L^{2}(G_{2})\vee L^{2}(G_{2}) &=\begin{pmatrix}
        2(6t-7)& -2(\beta_{j}+4) & -2 & 2\\\
        2&2&2t&6t
                                                   \end{pmatrix}, j=2, \ldots, 2t.
                      \end{align*}
Then we have,
\begin{equation*}
    E_{\epsilon}(L^{2}(G_{1}) \vee L^{2}(G_{1}))=E_{\epsilon}(L^{2}(G_{2}) \vee L^{2}(G_{2}))=72t-56.
\end{equation*}

\begin{example}

Let $G_{1}$ and $G_{2}$ are two graphs as in Figure \ref{fig:1}.  
\begin{figure}[H]
 \centering
\begin{tikzpicture}[scale=.7]
\filldraw[fill=black](0,2)circle(0.1cm); 
\draw(0,2)--(1,1);
\draw(0,2)--(-1,1);
\draw(0,2)--(0,-2);

\filldraw[fill=black](1,1)circle(0.1cm);
\draw(1,1)--(1,-1);
\draw(1,1)--(-1,1);

\filldraw[fill=black](1,-1)circle(0.1cm);
\draw(1,-1)--(-1,-1);
\draw(1,-1)--(0,-2);

\filldraw[fill=black](0,-2)circle(0.1cm);
\draw(0,-2)--(-1,-1);

\filldraw[fill=black](-1,-1)circle(0.1cm);
\draw(-1,-1)--(-1,1);
\filldraw[fill=black](-1,1)circle(0.1cm);
\filldraw[fill=black](5,2)circle(0.1cm);
\draw(5,2)--(6,1);
\draw(5,2)--(4,1);
\draw(5,2)--(5,-2);

\filldraw[fill=black](6,1)circle(0.1cm);
\draw(6,1)--(6,-1);
\draw(6,1)--(4,-1);

\filldraw[fill=black](6,-1)circle(0.1cm);
\draw(6,-1)--(5,-2);
\draw(6,-1)--(4,1);
\filldraw[fill=black](5,-2)circle(0.1cm);
\filldraw[fill=black](4,-1)circle(0.1cm);
\draw(4,-1)--(4,1);
\draw(4,-1)--(5,-2);
\filldraw[fill=black](4,1)circle(0.1cm);

\end{tikzpicture}
 \caption{ $G_{1}$ and $G_{2}$ are non isomorphic non cospectral cubic graphs on 6 vertices.}
\label{fig:1}
\end{figure}
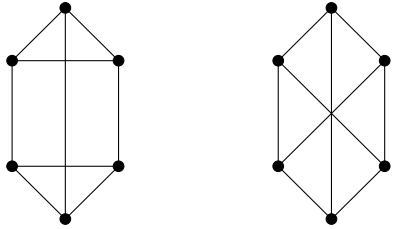

Then, $spec(G_{1})=\begin{pmatrix}
    3&1&0&-2\\
    1&1&2&2
\end{pmatrix}$ and $spec(G_{2})=\begin{pmatrix}
    3&0&3\\
    1&4&1
\end{pmatrix}.$ Now, $L^{2}(G_{l}) \vee L^{2}(G_{l}), l=1,2$ is a graph having 36 vertices. Also, its eccentricity spectrum is given by,
\begin{align*}
spec_{\epsilon}((L^{2}(G_{1}) \vee L^{2}(G_{1}))& =\begin{pmatrix}
    22&-10&-8&-4&-2&2\\
    2&2&4&4&6&18
\end{pmatrix}\\
spec_{\epsilon}((L^{2}(G_{2}) \vee L^{2}(G_{2}))& =\begin{pmatrix}
    22&-8&-2&2\\
    2&8&8&18
\end{pmatrix}.\\
\text { Then we have, }
                 E_{\epsilon}(L^{2}(G_{1}) \vee L^{2}(G_{1})) &= E_{\epsilon}(L^{2}(G_{2}) \vee L^{2}(G_{2}))=160.
\end{align*}
    
\end{example}

\begin{figure}[H]
 \centering
\begin{tikzpicture}[scale=.55]

\filldraw[fill=black](4,0)circle(0.1cm); 
\draw(4,0)--(-4,0);
\draw(4,0)--(-0.695,3.939);
\draw(4,0)--(3.064,2.571);
\draw(4,0)--(-3.064,2.571);
\draw(4,0)--(3.759,-1.368);
\draw(4,0)--(3.759,1.368);
\filldraw[fill=black](-4,0)circle(0.1cm); 
\draw(-4,0)--(-0.695,3.939);
\draw(-4,0)--(-3.064,-2.571);
\draw(-4,0)--(-3.064,2.571);
\draw(-4,0)--(-3.759,1.368);
\draw(-4,0)--(-3.759,-1.368);
\filldraw[fill=black](3.759,1.368)circle(0.1cm);
\draw(3.759,1.368)--(-2,-3.464);
\draw(3.759,1.368)--(3.064,-2.571);
\draw(3.759,1.368)--(3.064,2.571);
\draw(3.759,1.368)--(3.759,-1.368);
\draw(3.759,1.368)--(-3.759,-1.368);
\filldraw[fill=black](-3.759,1.368)circle(0.1cm);
\draw(-3.759,1.368)--(-2,3.464);
\draw(-3.759,1.368)--(2,3.464);
\draw(-3.759,1.368)--(-3.064,-2.571);
\draw(-3.759,1.368)--(-3.064,2.571);
\draw(-3.759,1.368)--(-3.759,-1.368);
\filldraw[fill=black](3.759,-1.368)circle(0.1cm);
\draw(3.759,-1.368)--(0.695,-3.939);
\draw(3.759,-1.368)--(2,-3.464);
\draw(3.759,-1.368)--(3.064,-2.571);
\draw(3.759,-1.368)--(3.064,2.571);
\filldraw[fill=black](-3.759,-1.368)circle(0.1cm);
\draw(-3.759,-1.368)--(-2,-3.464);
\draw(-3.759,-1.368)--(3.064,-2.571);
\draw(-3.759,-1.368)--(-3.064,-2.571);
\filldraw[fill=black](3.064,2.571)circle(0.1cm);
\draw(3.064,2.571)--(2,3.464);
\draw(3.064,2.571)--(0.695,3.939);
\draw(3.064,2.571)--(-0.695,3.939);
\filldraw[fill=black](-3.064,2.571)circle(0.1cm);
\draw(-3.064,2.571)--(-2,3.464);
\draw(-3.064,2.571)--(-0.695,3.939);
\draw(-3.064,2.571)--(2,3.464);
\filldraw[fill=black](3.064,-2.571)circle(0.1cm);
\draw(3.064,-2.571)--(2,-3.464);
\draw(3.064,-2.571)--(0.695,-3.939);
\draw(3.064,-2.571)--(-2,-3.464);
\filldraw[fill=black](-3.064,-2.571)circle(0.1cm);
\draw(-3.064,-2.571)--(-2,-3.464);
\draw(-3.064,-2.571)--(-0.695,-3.939);
\draw(-3.064,-2.571)--(0.695,-3.939);
\filldraw[fill=black](2,3.464)circle(0.1cm);
\draw(2,3.464)--(-2,3.464);
\draw(2,3.464)--(-0.695,3.939);
\draw(2,3.464)--(0.695,3.939);
\filldraw[fill=black](-2,3.464)circle(0.1cm);
\draw(-2,3.464)--(2,-3.464);
\draw(-2,3.464)--(-0.695,-3.939);
\draw(-2,3.464)--(0.695,3.939);
\filldraw[fill=black](2,-3.464)circle(0.1cm);
\draw(2,-3.464)--(0.695,-3.939);
\draw(2,-3.464)--(-0.695,-3.939);
\draw(2,-3.464)--(0.695,3.939);
\filldraw[fill=black](-2,-3.464)circle(0.1cm);
\draw(-2,-3.464)--(-0.695,-3.939);
\draw(-2,-3.464)--(0.695,-3.939);
\filldraw[fill=black](0.695,3.939)circle(0.1cm);
\draw(0.695,3.939)--(-0.695,3.939);
\draw(0.695,3.939)--(-0.695,-3.939);
\filldraw[fill=black](-0.695,3.939)circle(0.1cm);
\filldraw[fill=black](0.695,-3.939)circle(0.1cm);
\draw(0.695,-3.939)--(-0.695,-3.939);
\filldraw[fill=black](-0.695,-3.939)circle(0.1cm);
\filldraw[fill=black](16,0)circle(0.1cm); 
\draw(16,0)--(8,0);
\draw(16,0)--(11.305,3.939);
\draw(16,0)--(15.064,2.571);
\draw(16,0)--(8.936,2.571);
\draw(16,0)--(15.759,-1.368);
\draw(16,0)--(15.759,1.368);
\filldraw[fill=black](8,0)circle(0.1cm); 
\draw(8,0)--(11.305,3.939);
\draw(8,0)--(8.936,-2.571);
\draw(8,0)--(8.936,2.571);
\draw(8,0)--(8.241,1.368);
\draw(8,0)--(8.241,-1.368);
\filldraw[fill=black](15.759,1.368)circle(0.1cm);
\draw(15.759,1.368)--(10,-3.464);
\draw(15.759,1.368)--(15.064,-2.571);
\draw(15.759,1.368)--(15.064,2.571);
\draw(15.759,1.368)--(15.759,-1.368);
\draw(15.759,1.368)--(8.241,-1.368);
\filldraw[fill=black](8.241,1.368)circle(0.1cm);
\draw(8.241,1.368)--(10,3.464);
\draw(8.241,1.368)--(14,3.464);
\draw(8.241,1.368)--(8.936,-2.571);
\draw(8.241,1.368)--(8.936,2.571);
\draw(8.241,1.368)--(8.241,-1.368);
\filldraw[fill=black](15.759,-1.368)circle(0.1cm);
\draw(15.759,-1.368)--(12.695,-3.939);
\draw(15.759,-1.368)--(14,-3.464);
\draw(15.759,-1.368)--(15.064,-2.571);
\draw(15.759,-1.368)--(15.064,2.571);
\filldraw[fill=black](8.241,-1.368)circle(0.1cm);
\draw(8.241,-1.368)--(10,-3.464);
\draw(8.241,-1.368)--(15.064,-2.571);
\draw(8.241,-1.368)--(8.936,-2.571);
\filldraw[fill=black](15.064,2.571)circle(0.1cm);
\draw(15.064,2.571)--(14,3.464);
\draw(15.064,2.571)--(12.695,3.939);
\draw(15.064,2.571)--(11.305,3.939);
\filldraw[fill=black](8.936,2.571)circle(0.1cm);
\draw(8.936,2.571)--(10,3.464);
\draw(8.936,2.571)--(11.305,3.939);
\draw(8.936,2.571)--(14,3.464);
\filldraw[fill=black](15.064,-2.571)circle(0.1cm);
\draw(15.064,-2.571)--(14,-3.464);
\draw(15.064,-2.571)--(12.695,-3.939);
\draw(15.064,-2.571)--(10,-3.464);
\filldraw[fill=black](8.936,-2.571)circle(0.1cm);
\draw(8.936,-2.571)--(10,-3.464);
\draw(8.936,-2.571)--(11.305,-3.939);
\draw(8.936,-2.571)--(12.695,-3.939);
\filldraw[fill=black](14,3.464)circle(0.1cm);
\draw(14,3.464)--(10,3.464);
\draw(14,3.464)--(11.305,3.939);
\draw(14,3.464)--(12.695,3.939);
\filldraw[fill=black](10,3.464)circle(0.1cm);
\draw(10,3.464)--(14,-3.464);
\draw(10,3.464)--(11.305,-3.939);
\draw(10,3.464)--(12.695,3.939);
\filldraw[fill=black](14,-3.464)circle(0.1cm);
\draw(14,-3.464)--(12.695,-3.939);
\draw(14,-3.464)--(11.305,-3.939);
\draw(14,-3.464)--(12.695,3.939);
\filldraw[fill=black](10,-3.464)circle(0.1cm);
\draw(10,-3.464)--(11.305,-3.939);
\draw(10,-3.464)--(12.695,-3.939);
 \filldraw[fill=black](12.695,3.939)circle(0.1cm);
\draw(12.695,3.939)--(11.305,3.939);
\draw(12.695,3.939)--(11.305,-3.939);
\filldraw[fill=black](11.305,3.939)circle(0.1cm);
\filldraw[fill=black](12.695,-3.939)circle(0.1cm);
\draw(12.695,-3.939)--(11.305,-3.939);
\filldraw[fill=black](11.305,-3.939)circle(0.1cm);

\draw(4,0)--(16,0);
\draw(4,0)--(8,0);
\draw(4,0)--(15.759,1.368);
\draw(4,0)--(8.241,1.368);
\draw(4,0)--(15.759,-1.368);
\draw(4,0)--(8.241,-1.368);
\draw(4,0)--(15.064,2.571);
\draw(4,0)--(8.936,2.571);
\draw(4,0)--(15.064,-2.571);
\draw(4,0)--(8.936,-2.571);
\draw(4,0)--(14,3.464);
\draw(4,0)--(10,3.464);
\draw(4,0)--(14,-3.464);
\draw(4,0)--(10,-3.464);
\draw(4,0)--(12.695,3.939);
\draw(4,0)--(11.305,3.939);
\draw(4,0)--(12.695,-3.939);
\draw(4,0)--(11.305,-3.939);
\draw(-4,0)--(16,0);
\draw(-4,0)--(8,0);
\draw(-4,0)--(15.759,1.368);
\draw(-4,0)--(8.241,1.368);
\draw(-4,0)--(15.759,-1.368);
\draw(-4,0)--(8.241,-1.368);
\draw(-4,0)--(15.064,2.571);
\draw(-4,0)--(8.936,2.571);
\draw(-4,0)--(15.064,-2.571);
\draw(-4,0)--(8.936,-2.571);
\draw(-4,0)--(14,3.464);
\draw(-4,0)--(10,3.464);
\draw(-4,0)--(14,-3.464);
\draw(-4,0)--(10,-3.464);
\draw(-4,0)--(12.695,3.939);
\draw(-4,0)--(11.305,3.939);
\draw(-4,0)--(12.695,-3.939);
\draw(-4,0)--(11.305,-3.939);

\draw(3.759,1.368)--(16,0);
\draw(3.759,1.368)--(8,0);
\draw(3.759,1.368)--(15.759,1.368);
\draw(3.759,1.368)--(8.241,1.368);
\draw(3.759,1.368)--(15.759,-1.368);
\draw(3.759,1.368)--(8.241,-1.368);
\draw(3.759,1.368)--(15.064,2.571);
\draw(3.759,1.368)--(8.936,2.571);
\draw(3.759,1.368)--(15.064,-2.571);
\draw(3.759,1.368)--(8.936,-2.571);
\draw(3.759,1.368)--(14,3.464);
\draw(3.759,1.368)--(10,3.464);
\draw(3.759,1.368)--(14,-3.464);
\draw(3.759,1.368)--(10,-3.464);
\draw(3.759,1.368)--(12.695,3.939);
\draw(3.759,1.368)--(11.305,3.939);
\draw(3.759,1.368)--(12.695,-3.939);
\draw(3.759,1.368)--(11.305,-3.939);

\draw(-3.759,1.368)--(16,0);
\draw(-3.759,1.368)--(8,0);
\draw(-3.759,1.368)--(15.759,1.368);
\draw(-3.759,1.368)--(8.241,1.368);
\draw(-3.759,1.368)--(15.759,-1.368);
\draw(-3.759,1.368)--(8.241,-1.368);
\draw(-3.759,1.368)--(15.064,2.571);
\draw(-3.759,1.368)--(8.936,2.571);
\draw(-3.759,1.368)--(15.064,-2.571);
\draw(-3.759,1.368)--(8.936,-2.571);
\draw(-3.759,1.368)--(14,3.464);
\draw(-3.759,1.368)--(10,3.464);
\draw(-3.759,1.368)--(14,-3.464);
\draw(-3.759,1.368)--(10,-3.464);
\draw(-3.759,1.368)--(12.695,3.939);
\draw(-3.759,1.368)--(11.305,3.939);
\draw(-3.759,1.368)--(12.695,-3.939);
\draw(-3.759,1.368)--(11.305,-3.939);

\draw(3.759,-1.368)--(16,0);
\draw(3.759,-1.368)--(8,0);
\draw(3.759,-1.368)--(15.759,1.368);
\draw(3.759,-1.368)--(8.241,1.368);
\draw(3.759,-1.368)--(15.759,-1.368);
\draw(3.759,-1.368)--(8.241,-1.368);
\draw(3.759,-1.368)--(15.064,2.571);
\draw(3.759,-1.368)--(8.936,2.571);
\draw(3.759,-1.368)--(15.064,-2.571);
\draw(3.759,-1.368)--(8.936,-2.571);
\draw(3.759,-1.368)--(14,3.464);
\draw(3.759,-1.368)--(10,3.464);
\draw(3.759,-1.368)--(14,-3.464);
\draw(3.759,-1.368)--(10,-3.464);
\draw(3.759,-1.368)--(12.695,3.939);
\draw(3.759,-1.368)--(11.305,3.939);
\draw(3.759,-1.368)--(12.695,-3.939);
\draw(3.759,-1.368)--(11.305,-3.939);

\draw(-3.759,-1.368)--(16,0);
\draw(-3.759,-1.368)--(8,0);
\draw(-3.759,-1.368)--(15.759,1.368);
\draw(-3.759,-1.368)--(8.241,1.368);
\draw(-3.759,-1.368)--(15.759,-1.368);
\draw(-3.759,-1.368)--(8.241,-1.368);
\draw(-3.759,-1.368)--(15.064,2.571);
\draw(-3.759,-1.368)--(8.936,2.571);
\draw(-3.759,-1.368)--(15.064,-2.571);
\draw(-3.759,-1.368)--(8.936,-2.571);
\draw(-3.759,-1.368)--(14,3.464);
\draw(-3.759,-1.368)--(10,3.464);
\draw(-3.759,-1.368)--(14,-3.464);
\draw(-3.759,-1.368)--(10,-3.464);
\draw(-3.759,-1.368)--(12.695,3.939);
\draw(-3.759,-1.368)--(11.305,3.939);
\draw(-3.759,-1.368)--(12.695,-3.939);
\draw(-3.759,-1.368)--(11.305,-3.939);

\draw(3.064,2.571)--(16,0);
\draw(3.064,2.571)--(8,0);
\draw(3.064,2.571)--(15.759,1.368);
\draw(3.064,2.571)--(8.241,1.368);
\draw(3.064,2.571)--(15.759,-1.368);
\draw(3.064,2.571)--(8.241,-1.368);
\draw(3.064,2.571)--(15.064,2.571);
\draw(3.064,2.571)--(8.936,2.571);
\draw(3.064,2.571)--(15.064,-2.571);
\draw(3.064,2.571)--(8.936,-2.571);
\draw(3.064,2.571)--(14,3.464);
\draw(3.064,2.571)--(10,3.464);
\draw(3.064,2.571)--(14,-3.464);
\draw(3.064,2.571)--(10,-3.464);
\draw(3.064,2.571)--(12.695,3.939);
\draw(3.064,2.571)--(11.305,3.939);
\draw(3.064,2.571)--(12.695,-3.939);
\draw(3.064,2.571)--(11.305,-3.939);

\draw(-3.064,2.571)--(16,0);
\draw(-3.064,2.571)--(8,0);
\draw(-3.064,2.571)--(15.759,1.368);
\draw(-3.064,2.571)--(8.241,1.368);
\draw(-3.064,2.571)--(15.759,-1.368);
\draw(-3.064,2.571)--(8.241,-1.368);
\draw(-3.064,2.571)--(15.064,2.571);
\draw(-3.064,2.571)--(8.936,2.571);
\draw(-3.064,2.571)--(15.064,-2.571);
\draw(-3.064,2.571)--(8.936,-2.571);
\draw(-3.064,2.571)--(14,3.464);
\draw(-3.064,2.571)--(10,3.464);
\draw(-3.064,2.571)--(14,-3.464);
\draw(-3.064,2.571)--(10,-3.464);
\draw(-3.064,2.571)--(12.695,3.939);
\draw(-3.064,2.571)--(11.305,3.939);
\draw(-3.064,2.571)--(12.695,-3.939);
\draw(-3.064,2.571)--(11.305,-3.939);

\draw(3.064,-2.571)--(16,0);
\draw(3.064,-2.571)--(8,0);
\draw(3.064,-2.571)--(15.759,1.368);
\draw(3.064,-2.571)--(8.241,1.368);
\draw(3.064,-2.571)--(15.759,-1.368);
\draw(3.064,-2.571)--(8.241,-1.368);
\draw(3.064,-2.571)--(15.064,2.571);
\draw(3.064,-2.571)--(8.936,2.571);
\draw(3.064,-2.571)--(15.064,-2.571);
\draw(3.064,-2.571)--(8.936,-2.571);
\draw(3.064,-2.571)--(14,3.464);
\draw(3.064,-2.571)--(10,3.464);
\draw(3.064,-2.571)--(14,-3.464);
\draw(3.064,-2.571)--(10,-3.464);
\draw(3.064,-2.571)--(12.695,3.939);
\draw(3.064,-2.571)--(11.305,3.939);
\draw(3.064,-2.571)--(12.695,-3.939);
\draw(3.064,-2.571)--(11.305,-3.939);

\draw(-3.064,-2.571)--(16,0);
\draw(-3.064,-2.571)--(8,0);
\draw(-3.064,-2.571)--(15.759,1.368);
\draw(-3.064,-2.571)--(8.241,1.368);
\draw(-3.064,-2.571)--(15.759,-1.368);
\draw(-3.064,-2.571)--(8.241,-1.368);
\draw(-3.064,-2.571)--(15.064,2.571);
\draw(-3.064,-2.571)--(8.936,2.571);
\draw(-3.064,-2.571)--(15.064,-2.571);
\draw(-3.064,-2.571)--(8.936,-2.571);
\draw(-3.064,-2.571)--(14,3.464);
\draw(-3.064,-2.571)--(10,3.464);
\draw(-3.064,-2.571)--(14,-3.464);
\draw(-3.064,-2.571)--(10,-3.464);
\draw(-3.064,-2.571)--(12.695,3.939);
\draw(-3.064,-2.571)--(11.305,3.939);
\draw(-3.064,-2.571)--(12.695,-3.939);
\draw(-3.064,-2.571)--(11.305,-3.939);

\draw(2,3.464)--(16,0);
\draw(2,3.464)--(8,0);
\draw(2,3.464)--(15.759,1.368);
\draw(2,3.464)--(8.241,1.368);
\draw(2,3.464)--(15.759,-1.368);
\draw(2,3.464)--(8.241,-1.368);
\draw(2,3.464)--(15.064,2.571);
\draw(2,3.464)--(8.936,2.571);
\draw(2,3.464)--(15.064,-2.571);
\draw(2,3.464)--(8.936,-2.571);
\draw(2,3.464)--(14,3.464);
\draw(2,3.464)--(10,3.464);
\draw(2,3.464)--(14,-3.464);
\draw(2,3.464)--(10,-3.464);
\draw(2,3.464)--(12.695,3.939);
\draw(2,3.464)--(11.305,3.939);
\draw(2,3.464)--(12.695,-3.939);
\draw(2,3.464)--(11.305,-3.939);

\draw(-2,3.464)--(16,0);
\draw(-2,3.464)--(8,0);
\draw(-2,3.464)--(15.759,1.368);
\draw(-2,3.464)--(8.241,1.368);
\draw(-2,3.464)--(15.759,-1.368);
\draw(-2,3.464)--(8.241,-1.368);
\draw(-2,3.464)--(15.064,2.571);
\draw(-2,3.464)--(8.936,2.571);
\draw(-2,3.464)--(15.064,-2.571);
\draw(-2,3.464)--(8.936,-2.571);
\draw(-2,3.464)--(14,3.464);
\draw(-2,3.464)--(10,3.464);
\draw(-2,3.464)--(14,-3.464);
\draw(-2,3.464)--(10,-3.464);
\draw(-2,3.464)--(12.695,3.939);
\draw(-2,3.464)--(11.305,3.939);
\draw(-2,3.464)--(12.695,-3.939);
\draw(-2,3.464)--(11.305,-3.939);

\draw(2,-3.464)--(16,0);
\draw(2,-3.464)--(8,0);
\draw(2,-3.464)--(15.759,1.368);
\draw(2,-3.464)--(8.241,1.368);
\draw(2,-3.464)--(15.759,-1.368);
\draw(2,-3.464)--(8.241,-1.368);
\draw(2,-3.464)--(15.064,2.571);
\draw(2,-3.464)--(8.936,2.571);
\draw(2,-3.464)--(15.064,-2.571);
\draw(2,-3.464)--(8.936,-2.571);
\draw(2,-3.464)--(14,3.464);
\draw(2,-3.464)--(10,3.464);
\draw(2,-3.464)--(14,-3.464);
\draw(2,-3.464)--(10,-3.464);
\draw(2,-3.464)--(12.695,3.939);
\draw(2,-3.464)--(11.305,3.939);
\draw(2,-3.464)--(12.695,-3.939);
\draw(2,-3.464)--(11.305,-3.939);

\draw(-2,-3.464)--(16,0);
\draw(-2,-3.464)--(8,0);
\draw(-2,-3.464)--(15.759,1.368);
\draw(-2,-3.464)--(8.241,1.368);
\draw(-2,-3.464)--(15.759,-1.368);
\draw(-2,-3.464)--(8.241,-1.368);
\draw(-2,-3.464)--(15.064,2.571);
\draw(-2,-3.464)--(8.936,2.571);
\draw(-2,-3.464)--(15.064,-2.571);
\draw(-2,-3.464)--(8.936,-2.571);
\draw(-2,-3.464)--(14,3.464);
\draw(-2,-3.464)--(10,3.464);
\draw(-2,-3.464)--(14,-3.464);
\draw(-2,-3.464)--(10,-3.464);
\draw(-2,-3.464)--(12.695,3.939);
\draw(-2,-3.464)--(11.305,3.939);
\draw(-2,-3.464)--(12.695,-3.939);
\draw(-2,-3.464)--(11.305,-3.939);

\draw(0.695,3.939)--(16,0);
\draw(0.695,3.939)--(8,0);
\draw(0.695,3.939)--(15.759,1.368);
\draw(0.695,3.939)--(8.241,1.368);
\draw(0.695,3.939)--(15.759,-1.368);
\draw(0.695,3.939)--(8.241,-1.368);
\draw(0.695,3.939)--(15.064,2.571);
\draw(0.695,3.939)--(8.936,2.571);
\draw(0.695,3.939)--(15.064,-2.571);
\draw(0.695,3.939)--(8.936,-2.571);
\draw(0.695,3.939)--(14,3.464);
\draw(0.695,3.939)--(10,3.464);
\draw(0.695,3.939)--(14,-3.464);
\draw(0.695,3.939)--(10,-3.464);
\draw(0.695,3.939)--(12.695,3.939);
\draw(0.695,3.939)--(11.305,3.939);
\draw(0.695,3.939)--(12.695,-3.939);
\draw(0.695,3.939)--(11.305,-3.939);

\draw(-0.695,3.939)--(16,0);
\draw(-0.695,3.939)--(8,0);
\draw(-0.695,3.939)--(15.759,1.368);
\draw(-0.695,3.939)--(8.241,1.368);
\draw(-0.695,3.939)--(15.759,-1.368);
\draw(-0.695,3.939)--(8.241,-1.368);
\draw(-0.695,3.939)--(15.064,2.571);
\draw(-0.695,3.939)--(8.936,2.571);
\draw(-0.695,3.939)--(15.064,-2.571);
\draw(-0.695,3.939)--(8.936,-2.571);
\draw(-0.695,3.939)--(14,3.464);
\draw(-0.695,3.939)--(10,3.464);
\draw(-0.695,3.939)--(14,-3.464);
\draw(-0.695,3.939)--(10,-3.464);
\draw(-0.695,3.939)--(12.695,3.939);
\draw(-0.695,3.939)--(11.305,3.939);
\draw(-0.695,3.939)--(12.695,-3.939);
\draw(-0.695,3.939)--(11.305,-3.939);

\draw(0.695,-3.939)--(16,0);
\draw(0.695,-3.939)--(8,0);
\draw(0.695,-3.939)--(15.759,1.368);
\draw(0.695,-3.939)--(8.241,1.368);
\draw(0.695,-3.939)--(15.759,-1.368);
\draw(0.695,-3.939)--(8.241,-1.368);
\draw(0.695,-3.939)--(15.064,2.571);
\draw(0.695,-3.939)--(8.936,2.571);
\draw(0.695,-3.939)--(15.064,-2.571);
\draw(0.695,-3.939)--(8.936,-2.571);
\draw(0.695,-3.939)--(14,3.464);
\draw(0.695,-3.939)--(10,3.464);
\draw(0.695,-3.939)--(14,-3.464);
\draw(0.695,-3.939)--(10,-3.464);
\draw(0.695,-3.939)--(12.695,3.939);
\draw(0.695,-3.939)--(11.305,3.939);
\draw(0.695,-3.939)--(12.695,-3.939);
\draw(0.695,-3.939)--(11.305,-3.939);

\draw(-0.695,-3.939)--(16,0);
\draw(-0.695,-3.939)--(8,0);
\draw(-0.695,-3.939)--(15.759,1.368);
\draw(-0.695,-3.939)--(8.241,1.368);
\draw(-0.695,-3.939)--(15.759,-1.368);
\draw(-0.695,-3.939)--(8.241,-1.368);
\draw(-0.695,-3.939)--(15.064,2.571);
\draw(-0.695,-3.939)--(8.936,2.571);
\draw(-0.695,-3.939)--(15.064,-2.571);
\draw(-0.695,-3.939)--(8.936,-2.571);
\draw(-0.695,-3.939)--(14,3.464);
\draw(-0.695,-3.939)--(10,3.464);
\draw(-0.695,-3.939)--(14,-3.464);
\draw(-0.695,-3.939)--(10,-3.464);
\draw(-0.695,-3.939)--(12.695,3.939);
\draw(-0.695,-3.939)--(11.305,3.939);
\draw(-0.695,-3.939)--(12.695,-3.939);
\draw(-0.695,-3.939)--(11.305,-3.939);
\filldraw[fill=black](4,-9)circle(0.1cm); 
\draw(4,-9)--(3.759,-7.632);
\draw(4,-9)--(3.759,-10.368);
\draw(4,-9)--(3.064,-11.571);
\draw(4,-9)--(2,-12.464);
\draw(4,-9)--(0.695,-12.939);
\draw(4,-9)--(-3.064,-6.429);

\filldraw[fill=black](-4,-9)circle(0.1cm); 
\draw(-4,-9)--(-3.759,-7.632);
\draw(-4,-9)--(-2,-5.536);
\draw(-4,-9)--(2,-5.536);
\draw(-4,-9)--(3.064,-6.429);
\draw(-4,-9)--(-3.759,-10.368);
\draw(-4,-9)--(2,-12.464);

\filldraw[fill=black](3.759,-7.632)circle(0.1cm);
\draw(3.759,-7.632)--(3.064,-6.429);
\draw(3.759,-7.632)--(0.695,-12.939);
\draw(3.759,-7.632)--(-0.695,-12.939);
\draw(3.759,-7.632)--(-3.759,-10.368);
\draw(3.759,-7.632)--(-3.064,-6.429);

\filldraw[fill=black](-3.759,-7.632)circle(0.1cm);
\draw(-3.759,-7.632)--(-3.759,-10.368);
\draw(-3.759,-7.632)--(-3.064,-11.571);
\draw(-3.759,-7.632)--(2,-12.464);
\draw(-3.759,-7.632)--(3.759,-10.368);
\draw(-3.759,-7.632)--(0.695,-5.061);

\filldraw[fill=black](3.759,-10.368)circle(0.1cm);
\draw(3.759,-10.368)--(3.064,-11.571);
\draw(3.759,-10.368)--(2,-12.464);
\draw(3.759,-10.368)--(-3.064,-11.571);
\draw(3.759,-10.368)--(0.695,-5.061);

\filldraw[fill=black](-3.759,-10.368)circle(0.1cm);
\draw(-3.759,-10.368)--(3.064,-6.429);
\draw(-3.759,-10.368)--(2,-12.464);
\draw(-3.759,-10.368)--(-0.695,-12.939);

\filldraw[fill=black](3.064,-6.429)circle(0.1cm);
\draw(3.064,-6.429)--(2,-5.536);
\draw(3.064,-6.429)--(-2,-5.536);
\draw(3.064,-6.429)--(-0.695,-12.939);

\filldraw[fill=black](-3.064,-6.429)circle(0.1cm);
\draw(-3.064,-6.429)--(-2,-5.536);
\draw(-3.064,-6.429)--(-0.695,-5.061);
\draw(-3.064,-6.429)--(3.064,-11.571);
\draw(-3.064,-6.429)--(0.695,-12.939);

\filldraw[fill=black](3.064,-11.571)circle(0.1cm);
\draw(3.064,-11.571)--(2,-12.464);
\draw(3.064,-11.571)--(-0.695,-5.061);
\draw(3.064,-11.571)--(-2,-5.536);

\filldraw[fill=black](-3.064,-11.571)circle(0.1cm);
\draw(-3.064,-11.571)--(-2,-12.464);
\draw(-3.064,-11.571)--(-0.695,-12.939);
\draw(-3.064,-11.571)--(0.695,-12.939);
\draw(-3.064,-11.571)--(0.695,-5.061);

\filldraw[fill=black](2,-5.536)circle(0.1cm);
\draw(2,-5.536)--(0.695,-5.061);
\draw(2,-5.536)--(-0.695,-5.061);
\draw(2,-5.536)--(-2,-5.536);
\draw(2,-5.536)--(-2,-12.464);

\filldraw[fill=black](-2,-5.536)circle(0.1cm);
\draw(-2,-5.536)--(-0.695,-5.061);

\filldraw[fill=black](2,-12.464)circle(0.1cm);

\filldraw[fill=black](-2,-12.464)circle(0.1cm);
\draw(-2,-12.464)--(-0.695,-12.939);
\draw(-2,-12.464)--(0.695,-12.939);
\draw(-2,-12.464)--(-0.695,-5.061);
\draw(-2,-12.464)--(0.695,-5.061);

\filldraw[fill=black](0.695,-5.061)circle(0.1cm);
\draw(0.695,-5.061)--(-0.695,-5.061);

\filldraw[fill=black](-0.695,-5.061)circle(0.1cm);

\filldraw[fill=black](0.695,-12.939)circle(0.1cm);
\draw(0.695,-12.939)--(-0.695,-12.939);

\filldraw[fill=black](-0.695,-12.939)circle(0.1cm);
\filldraw[fill=black](16,-9)circle(0.1cm); 
\draw(16,-9)--(15.759,-7.362);
\draw(16,-9)--(8.936,-6.429);
\draw(16,-9)--(15.759,-10.368);
\draw(16,-9)--(15.064,-11.571);
 \draw(16,-9)--(14,-12.464);
\draw(16,-9)--(12.965,-12.939);

\filldraw[fill=black](8,-9)circle(0.1cm); 
\draw(8,-9)--(8.241,-10.368);
\draw(8,-9)--(14,-12.464);
\draw(8,-9)--(8.241,-7.362);
\draw(8,-9)--(10,-5.536);
\draw(8,-9)--(14,-5.536);
\draw(8,-9)--(15.064,-6.429);

\filldraw[fill=black](15.759,-7.362)circle(0.1cm);
\draw(15.759,-7.362)--(15.064,-6.429);
\draw(15.759,-7.362)--(8.241,-10.368);
\draw(15.759,-7.362)--(8.936,-6.429);
\draw(15.759,-7.362)--(11.305,-12.939);
\draw(15.759,-7.362)--(12.965,-12.939);

\filldraw[fill=black](8.241,-7.362)circle(0.1cm);
\draw(8.241,-7.362)--(8.241,-10.368);
\draw(8.241,-7.362)--(8.936,-11.571);
\draw(8.241,-7.362)--(14,-12.464);
\draw(8.241,-7.362)--(15.759,-10.368);
\draw(8.241,-7.362)--(12.695,-5.061);

\filldraw[fill=black](15.759,-10.368)circle(0.1cm);
\draw(15.759,-10.368)--(12.695,-5.061);
\draw(15.759,-10.368)--(8.936,-11.571);
\draw(15.759,-10.368)--(14,-12.464);
\draw(15.759,-10.368)--(15.064,-11.571);

\filldraw[fill=black](8.241,-10.368)circle(0.1cm);
\draw(8.241,-10.368)--(11.305,-12.939);
\draw(8.241,-10.368)--(15.064,-6.429);
\draw(8.241,-10.368)--(14,-12.464);

\filldraw[fill=black](15.064,-6.429)circle(0.1cm);
\draw(15.064,-6.429)--(14,-5.536);
\draw(15.064,-6.429)--(10,-5.536);
\draw(15.064,-6.429)--(11.305,-12.939);

\filldraw[fill=black](8.936,-6.429)circle(0.1cm);
\draw(8.936,-6.429)--(10,-5.536);
\draw(8.936,-6.429)--(11.305,-5.061);
\draw(8.936,-6.429)--(15.064,-11.571);
\draw(8.936,-6.429)--(12.965,-12.939);

\filldraw[fill=black](15.064,-11.571)circle(0.1cm);
\draw(15.064,-11.571)--(14,-12.464);
\draw(15.064,-11.571)--(10,-5.536);
\draw(15.064,-11.571)--(11.305,-5.061);

\filldraw[fill=black](8.936,-11.571)circle(0.1cm);
\draw(8.936,-11.571)--(12.965,-12.939);
\draw(8.936,-11.571)--(11.305,-12.939);
\draw(8.936,-11.571)--(10,-12.464);
\draw(8.936,-11.571)--(12.695,-5.061);

\filldraw[fill=black](14,-5.536)circle(0.1cm);
\draw(14,-5.536)--(10,-5.536);
\draw(14,-5.536)--(11.305,-5.061);
\draw(14,-5.536)--(12.695,-5.061);
\draw(14,-5.536)--(10,-12.464);

\filldraw[fill=black](10,-5.536)circle(0.1cm);
\draw(10,-5.536)--(11.305,-5.061);

\filldraw[fill=black](14,-12.464)circle(0.1cm);

\filldraw[fill=black](10,-12.464)circle(0.1cm);
\draw(10,-12.464)--(11.305,-5.061);
\draw(10,-12.464)--(12.695,-5.061);
\draw(10,-12.464)--(12.965,-12.939);
\draw(10,-12.464)--(11.305,-12.939);

 \filldraw[fill=black](12.695,-5.061)circle(0.1cm);
\draw(12.695,-5.061)--(11.305,-5.061);

\filldraw[fill=black](11.305,-5.061)circle(0.1cm);

\filldraw[fill=black](12.965,-12.939)circle(0.1cm);
\draw(12.965,-12.939)--(11.305,-12.939);

\filldraw[fill=black](11.305,-12.939)circle(0.1cm);

\draw(4,-9)--(16,-9);
\draw(4,-9)--(8,-9);
\draw(4,-9)--(15.759,-7.362);
\draw(4,-9)--(8.241,-7.362);
\draw(4,-9)--(15.759,-10.368);
\draw(4,-9)--(8.241,-10.368);
\draw(4,-9)--(15.064,-6.429);
\draw(4,-9)--(8.936,-6.429);
\draw(4,-9)--(15.064,-11.571);
\draw(4,-9)--(8.936,-11.571);
\draw(4,-9)--(14,-5.536);
\draw(4,-9)--(10,-5.536);
\draw(4,-9)--(14,-12.464);
\draw(4,-9)--(10,-12.464);
\draw(4,-9)--(12.695,-5.061);
\draw(4,-9)--(11.305,-5.061);
\draw(4,-9)--(12.965,-12.939);
\draw(4,-9)--(11.305,-12.939);
\draw(-4,-9)--(16,-9);
\draw(-4,-9)--(8,-9);
\draw(-4,-9)--(15.759,-7.362);
\draw(-4,-9)--(8.241,-7.362);
\draw(-4,-9)--(15.759,-10.368);
\draw(-4,-9)--(8.241,-10.368);
\draw(-4,-9)--(15.064,-6.429);
\draw(-4,-9)--(8.936,-6.429);
\draw(-4,-9)--(15.064,-11.571);
\draw(-4,-9)--(8.936,-11.571);
\draw(-4,-9)--(14,-5.536);
\draw(-4,-9)--(10,-5.536);
\draw(-4,-9)--(14,-12.464);
\draw(-4,-9)--(10,-12.464);
\draw(-4,-9)--(12.695,-5.061);
\draw(-4,-9)--(11.305,-5.061);
\draw(-4,-9)--(12.965,-12.939);
\draw(-4,-9)--(11.305,-12.939);
\draw(3.759,-7.632)--(16,-9);
\draw(3.759,-7.632)--(8,-9);
\draw(3.759,-7.632)--(15.759,-7.362);
\draw(3.759,-7.632)--(8.241,-7.362);
\draw(3.759,-7.632)--(15.759,-10.368);
\draw(3.759,-7.632)--(8.241,-10.368);
\draw(3.759,-7.632)--(15.064,-6.429);
\draw(3.759,-7.632)--(8.936,-6.429);
\draw(3.759,-7.632)--(15.064,-11.571);
\draw(3.759,-7.632)--(8.936,-11.571);
\draw(3.759,-7.632)--(14,-5.536);
\draw(3.759,-7.632)--(10,-5.536);
\draw(3.759,-7.632)--(14,-12.464);
\draw(3.759,-7.632)--(10,-12.464);
\draw(3.759,-7.632)--(12.695,-5.061);
\draw(3.759,-7.632)--(11.305,-5.061);
\draw(3.759,-7.632)--(12.965,-12.939);
\draw(3.759,-7.632)--(11.305,-12.939);
\draw(-3.759,-7.632)--(16,-9);
\draw(-3.759,-7.632)--(8,-9);
\draw(-3.759,-7.632)--(15.759,-7.362);
\draw(-3.759,-7.632)--(8.241,-7.362);
\draw(-3.759,-7.632)--(15.759,-10.368);
\draw(-3.759,-7.632)--(8.241,-10.368);
\draw(-3.759,-7.632)--(15.064,-6.429);
\draw(-3.759,-7.632)--(8.936,-6.429);
\draw(-3.759,-7.632)--(15.064,-11.571);
\draw(-3.759,-7.632)--(8.936,-11.571);
\draw(-3.759,-7.632)--(14,-5.536);
\draw(-3.759,-7.632)--(10,-5.536);
\draw(-3.759,-7.632)--(14,-12.464);
\draw(-3.759,-7.632)--(10,-12.464);
\draw(-3.759,-7.632)--(12.695,-5.061);
\draw(-3.759,-7.632)--(11.305,-5.061);
\draw(-3.759,-7.632)--(12.965,-12.939);
\draw(-3.759,-7.632)--(11.305,-12.939);
\draw(3.759,-10.368)--(16,-9);
\draw(3.759,-10.368)--(8,-9);
\draw(3.759,-10.368)--(15.759,-7.362);
\draw(3.759,-10.368)--(8.241,-7.362);
\draw(3.759,-10.368)--(15.759,-10.368);
\draw(3.759,-10.368)--(8.241,-10.368);
\draw(3.759,-10.368)--(15.064,-6.429);
\draw(3.759,-10.368)--(8.936,-6.429);
\draw(3.759,-10.368)--(15.064,-11.571);
\draw(3.759,-10.368)--(8.936,-11.571);
\draw(3.759,-10.368)--(14,-5.536);
\draw(3.759,-10.368)--(10,-5.536);
\draw(3.759,-10.368)--(14,-12.464);
\draw(3.759,-10.368)--(10,-12.464);
\draw(3.759,-10.368)--(12.695,-5.061);
\draw(3.759,-10.368)--(11.305,-5.061);
\draw(3.759,-10.368)--(12.965,-12.939);
\draw(3.759,-10.368)--(11.305,-12.939);
\draw(-3.759,-10.368)--(16,-9);
\draw(-3.759,-10.368)--(8,-9);
\draw(-3.759,-10.368)--(15.759,-7.362);
\draw(-3.759,-10.368)--(8.241,-7.362);
\draw(-3.759,-10.368)--(15.759,-10.368);
\draw(-3.759,-10.368)--(8.241,-10.368);
\draw(-3.759,-10.368)--(15.064,-6.429);
\draw(-3.759,-10.368)--(8.936,-6.429);
\draw(-3.759,-10.368)--(15.064,-11.571);
\draw(-3.759,-10.368)--(8.936,-11.571);
\draw(-3.759,-10.368)--(14,-5.536);
\draw(-3.759,-10.368)--(10,-5.536);
\draw(-3.759,-10.368)--(14,-12.464);
\draw(-3.759,-10.368)--(10,-12.464);
\draw(-3.759,-10.368)--(12.695,-5.061);
\draw(-3.759,-10.368)--(11.305,-5.061);
\draw(-3.759,-10.368)--(12.965,-12.939);
\draw(-3.759,-10.368)--(11.305,-12.939);
\draw(3.064,-6.429)--(16,-9);
\draw(3.064,-6.429)--(8,-9);
\draw(3.064,-6.429)--(15.759,-7.362);
\draw(3.064,-6.429)--(8.241,-7.362);
\draw(3.064,-6.429)--(15.759,-10.368);
\draw(3.064,-6.429)--(8.241,-10.368);
\draw(3.064,-6.429)--(15.064,-6.429);
\draw(3.064,-6.429)--(8.936,-6.429);
\draw(3.064,-6.429)--(15.064,-11.571);
\draw(3.064,-6.429)--(8.936,-11.571);
\draw(3.064,-6.429)--(14,-5.536);
\draw(3.064,-6.429)--(10,-5.536);
\draw(3.064,-6.429)--(14,-12.464);
\draw(3.064,-6.429)--(10,-12.464);
\draw(3.064,-6.429)--(12.695,-5.061);
\draw(3.064,-6.429)--(11.305,-5.061);
\draw(3.064,-6.429)--(12.965,-12.939);
\draw(3.064,-6.429)--(11.305,-12.939);
\draw(-3.064,-6.429)--(16,-9);
\draw(-3.064,-6.429)--(8,-9);
\draw(-3.064,-6.429)--(15.759,-7.362);
\draw(-3.064,-6.429)--(8.241,-7.362);
\draw(-3.064,-6.429)--(15.759,-10.368);
\draw(-3.064,-6.429)--(8.241,-10.368);
\draw(-3.064,-6.429)--(15.064,-6.429);
\draw(-3.064,-6.429)--(8.936,-6.429);
\draw(-3.064,-6.429)--(15.064,-11.571);
\draw(-3.064,-6.429)--(8.936,-11.571);
\draw(-3.064,-6.429)--(14,-5.536);
\draw(-3.064,-6.429)--(10,-5.536);
\draw(-3.064,-6.429)--(14,-12.464);
\draw(-3.064,-6.429)--(10,-12.464);
\draw(-3.064,-6.429)--(12.695,-5.061);
\draw(-3.064,-6.429)--(11.305,-5.061);
\draw(-3.064,-6.429)--(12.965,-12.939);
\draw(-3.064,-6.429)--(11.305,-12.939);
\draw(3.064,-11.571)--(16,-9);
\draw(3.064,-11.571)--(8,-9);
\draw(3.064,-11.571)--(15.759,-7.362);
\draw(3.064,-11.571)--(8.241,-7.362);
\draw(3.064,-11.571)--(15.759,-10.368);
\draw(3.064,-11.571)--(8.241,-10.368);
\draw(3.064,-11.571)--(15.064,-6.429);
\draw(3.064,-11.571)--(8.936,-6.429);
\draw(3.064,-11.571)--(15.064,-11.571);
\draw(3.064,-11.571)--(8.936,-11.571);
\draw(3.064,-11.571)--(14,-5.536);
\draw(3.064,-11.571)--(10,-5.536);
\draw(3.064,-11.571)--(14,-12.464);
\draw(3.064,-11.571)--(10,-12.464);
\draw(3.064,-11.571)--(12.695,-5.061);
\draw(3.064,-11.571)--(11.305,-5.061);
\draw(3.064,-11.571)--(12.965,-12.939);
\draw(3.064,-11.571)--(11.305,-12.939);
\draw(-3.064,-11.571)--(16,-9);
\draw(-3.064,-11.571)--(8,-9);
\draw(-3.064,-11.571)--(15.759,-7.362);
\draw(-3.064,-11.571)--(8.241,-7.362);
\draw(-3.064,-11.571)--(15.759,-10.368);
\draw(-3.064,-11.571)--(8.241,-10.368);
\draw(-3.064,-11.571)--(15.064,-6.429);
\draw(-3.064,-11.571)--(8.936,-6.429);
\draw(-3.064,-11.571)--(15.064,-11.571);
\draw(-3.064,-11.571)--(8.936,-11.571);
\draw(-3.064,-11.571)--(14,-5.536);
\draw(-3.064,-11.571)--(10,-5.536);
\draw(-3.064,-11.571)--(14,-12.464);
\draw(-3.064,-11.571)--(10,-12.464);
\draw(-3.064,-11.571)--(12.695,-5.061);
\draw(-3.064,-11.571)--(11.305,-5.061);
\draw(-3.064,-11.571)--(12.965,-12.939);
\draw(-3.064,-11.571)--(11.305,-12.939);
\draw(2,-5.536)--(16,-9);
\draw(2,-5.536)--(8,-9);
\draw(2,-5.536)--(15.759,-7.362);
\draw(2,-5.536)--(8.241,-7.362);
\draw(2,-5.536)--(15.759,-10.368);
\draw(2,-5.536)--(8.241,-10.368);
\draw(2,-5.536)--(15.064,-6.429);
\draw(2,-5.536)--(8.936,-6.429);
\draw(2,-5.536)--(15.064,-11.571);
\draw(2,-5.536)--(8.936,-11.571);
\draw(2,-5.536)--(14,-5.536);
\draw(2,-5.536)--(10,-5.536);
\draw(2,-5.536)--(14,-12.464);
\draw(2,-5.536)--(10,-12.464);
\draw(2,-5.536)--(12.695,-5.061);
\draw(2,-5.536)--(11.305,-5.061);
\draw(2,-5.536)--(12.965,-12.939);
\draw(2,-5.536)--(11.305,-12.939);
\draw(-2,-5.536)--(16,-9);
\draw(-2,-5.536)--(8,-9);
\draw(-2,-5.536)--(15.759,-7.362);
\draw(-2,-5.536)--(8.241,-7.362);
\draw(-2,-5.536)--(15.759,-10.368);
\draw(-2,-5.536)--(8.241,-10.368);
\draw(-2,-5.536)--(15.064,-6.429);
\draw(-2,-5.536)--(8.936,-6.429);
\draw(-2,-5.536)--(15.064,-11.571);
\draw(-2,-5.536)--(8.936,-11.571);
\draw(-2,-5.536)--(14,-5.536);
\draw(-2,-5.536)--(10,-5.536);
\draw(-2,-5.536)--(14,-12.464);
\draw(-2,-5.536)--(10,-12.464);
\draw(-2,-5.536)--(12.695,-5.061);
\draw(-2,-5.536)--(11.305,-5.061);
\draw(-2,-5.536)--(12.965,-12.939);
\draw(-2,-5.536)--(11.305,-12.939);
\draw(2,-12.464)--(16,-9);
\draw(2,-12.464)--(8,-9);
\draw(2,-12.464)--(15.759,-7.362);
\draw(2,-12.464)--(8.241,-7.362);
\draw(2,-12.464)--(15.759,-10.368);
\draw(2,-12.464)--(8.241,-10.368);
\draw(2,-12.464)--(15.064,-6.429);
\draw(2,-12.464)--(8.936,-6.429);
\draw(2,-12.464)--(15.064,-11.571);
\draw(2,-12.464)--(8.936,-11.571);
\draw(2,-12.464)--(14,-5.536);
\draw(2,-12.464)--(10,-5.536);
\draw(2,-12.464)--(14,-12.464);
\draw(2,-12.464)--(10,-12.464);
\draw(2,-12.464)--(12.695,-5.061);
\draw(2,-12.464)--(11.305,-5.061);
\draw(2,-12.464)--(12.965,-12.939);
\draw(2,-12.464)--(11.305,-12.939);
\draw(-2,-12.464)--(16,-9);
\draw(-2,-12.464)--(8,-9);
\draw(-2,-12.464)--(15.759,-7.362);
\draw(-2,-12.464)--(8.241,-7.362);
\draw(-2,-12.464)--(15.759,-10.368);
\draw(-2,-12.464)--(8.241,-10.368);
\draw(-2,-12.464)--(15.064,-6.429);
\draw(-2,-12.464)--(8.936,-6.429);
\draw(-2,-12.464)--(15.064,-11.571);
\draw(-2,-12.464)--(8.936,-11.571);
\draw(-2,-12.464)--(14,-5.536);
\draw(-2,-12.464)--(10,-5.536);
\draw(-2,-12.464)--(14,-12.464);
\draw(-2,-12.464)--(10,-12.464);
\draw(-2,-12.464)--(12.695,-5.061);
\draw(-2,-12.464)--(11.305,-5.061);
\draw(-2,-12.464)--(12.965,-12.939);
\draw(-2,-12.464)--(11.305,-12.939);
\draw(0.695,-5.061)--(16,-9);
\draw(0.695,-5.061)--(8,-9);
\draw(0.695,-5.061)--(15.759,-7.362);
\draw(0.695,-5.061)--(8.241,-7.362);
\draw(0.695,-5.061)--(15.759,-10.368);
\draw(0.695,-5.061)--(8.241,-10.368);
\draw(0.695,-5.061)--(15.064,-6.429);
\draw(0.695,-5.061)--(8.936,-6.429);
\draw(0.695,-5.061)--(15.064,-11.571);
\draw(0.695,-5.061)--(8.936,-11.571);
\draw(0.695,-5.061)--(14,-5.536);
\draw(0.695,-5.061)--(10,-5.536);
\draw(0.695,-5.061)--(14,-12.464);
\draw(0.695,-5.061)--(10,-12.464);
\draw(0.695,-5.061)--(12.695,-5.061);
\draw(0.695,-5.061)--(11.305,-5.061);
\draw(0.695,-5.061)--(12.965,-12.939);
\draw(0.695,-5.061)--(11.305,-12.939);
\draw(-0.695,-5.061)--(16,-9);
\draw(-0.695,-5.061)--(8,-9);
\draw(-0.695,-5.061)--(15.759,-7.362);
\draw(-0.695,-5.061)--(8.241,-7.362);
\draw(-0.695,-5.061)--(15.759,-10.368);
\draw(-0.695,-5.061)--(8.241,-10.368);
\draw(-0.695,-5.061)--(15.064,-6.429);
\draw(-0.695,-5.061)--(8.936,-6.429);
\draw(-0.695,-5.061)--(15.064,-11.571);
\draw(-0.695,-5.061)--(8.936,-11.571);
\draw(-0.695,-5.061)--(14,-5.536);
\draw(-0.695,-5.061)--(10,-5.536);
\draw(-0.695,-5.061)--(14,-12.464);
\draw(-0.695,-5.061)--(10,-12.464);
\draw(-0.695,-5.061)--(12.695,-5.061);
\draw(-0.695,-5.061)--(11.305,-5.061);
\draw(-0.695,-5.061)--(12.965,-12.939);
\draw(-0.695,-5.061)--(11.305,-12.939);
\draw(0.695,-12.939)--(16,-9);
\draw(0.695,-12.939)--(8,-9);
\draw(0.695,-12.939)--(15.759,-7.362);
\draw(0.695,-12.939)--(8.241,-7.362);
\draw(0.695,-12.939)--(15.759,-10.368);
\draw(0.695,-12.939)--(8.241,-10.368);
\draw(0.695,-12.939)--(15.064,-6.429);
\draw(0.695,-12.939)--(8.936,-6.429);
\draw(0.695,-12.939)--(15.064,-11.571);
\draw(0.695,-12.939)--(8.936,-11.571);
\draw(0.695,-12.939)--(14,-5.536);
\draw(0.695,-12.939)--(10,-5.536);
\draw(0.695,-12.939)--(14,-12.464);
\draw(0.695,-12.939)--(10,-12.464);
\draw(0.695,-12.939)--(12.695,-5.061);
\draw(0.695,-12.939)--(11.305,-5.061);
\draw(0.695,-12.939)--(12.965,-12.939);
\draw(0.695,-12.939)--(11.305,-12.939);
\draw(-0.695,-12.939)--(16,-9);
\draw(-0.695,-12.939)--(8,-9);
\draw(-0.695,-12.939)--(15.759,-7.362);
\draw(-0.695,-12.939)--(8.241,-7.362);
\draw(-0.695,-12.939)--(15.759,-10.368);
\draw(-0.695,-12.939)--(8.241,-10.368);
\draw(-0.695,-12.939)--(15.064,-6.429);
\draw(-0.695,-12.939)--(8.936,-6.429);
\draw(-0.695,-12.939)--(15.064,-11.571);
\draw(-0.695,-12.939)--(8.936,-11.571);
\draw(-0.695,-12.939)--(14,-5.536);
\draw(-0.695,-12.939)--(10,-5.536);
\draw(-0.695,-12.939)--(14,-12.464);
\draw(-0.695,-12.939)--(10,-12.464);
\draw(-0.695,-12.939)--(12.695,-5.061);
\draw(-0.695,-12.939)--(11.305,-5.061);
\draw(-0.695,-12.939)--(12.965,-12.939);
\draw(-0.695,-12.939)--(11.305,-12.939);

 \end{tikzpicture}
 \caption{non isomorphic non $\epsilon-$cospectral $\epsilon$equienergetic graphs on 36-vertices with $E_{\epsilon}=160$}
\end{figure}
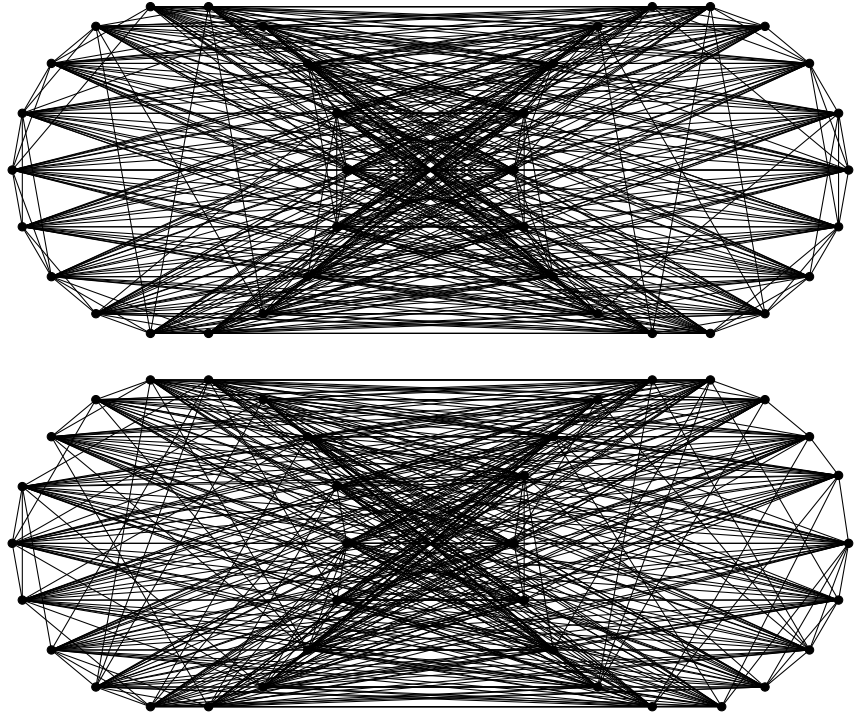

\begin{theorem}
    For every $t\geq3$, there exist a pair of $non$ $\epsilon-$cospectral $\epsilon-$equienergetic graphs on $6t+1$ vertices.
\end{theorem}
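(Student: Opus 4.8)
The plan is to mirror the $12t$-vertex construction of the preceding theorem, but to replace the self-join with a join against $K_1$, which trims the vertex count from $12t$ down to $6t+1$. I would again start from a pair $G_1,G_2$ of non-cospectral cubic graphs on $2t$ vertices furnished by Lemma~\ref{cubicnoncospectral}, with $spec(G_1)=\{3,\lambda_2,\dots,\lambda_{2t}\}$ and $spec(G_2)=\{3,\beta_2,\dots,\beta_{2t}\}$. Each $G_j$ being $3$-regular on $2t$ vertices, its iterated line graph $L^{2}(G_j)$ is a connected $6$-regular graph on $6t$ vertices, non-complete for $t\ge 3$; hence $L^{2}(G_j)\vee K_1$ has exactly $6t+1$ vertices, and Theorem~\ref{GjoinK1} applies to it.

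First I would record $spec\,L^{2}(G_j)$ from Lemma~\ref{2linegraph} with $r=3$, $p=2t$, namely
\begin{equation*}
spec\,L^{2}(G_1)=\begin{pmatrix} 6 & \lambda_2+3 & \cdots & \lambda_{2t}+3 & 0 & -2\\ 1 & 1 & \cdots & 1 & t & 3t\end{pmatrix},
\end{equation*}
and analogously for $G_2$ with the $\beta_i$. Substituting these $A$-eigenvalues into Theorem~\ref{GjoinK1} with $p=6t$ and $r=6$, the $\epsilon$-spectrum of $L^{2}(G_1)\vee K_1$ consists of the two roots $(6t-7)\pm\sqrt{(6t-7)^{2}+6t}$, the values $-2(\lambda_i+4)$ for $i=2,\dots,2t$, the value $-2$ with multiplicity $t$, and the value $2$ with multiplicity $3t$; the same description with $\beta_i$ holds for $L^{2}(G_2)\vee K_1$.

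The crucial point is that only the block $\{-2(\lambda_i+4)\}$ carries any dependence on the individual cubic graph, and even its contribution to the energy is graph-independent. Since a cubic graph has all eigenvalues in $[-3,3]$, each $\lambda_i+4\ge 1$, so $\sum_{i=2}^{2t}\lvert -2(\lambda_i+4)\rvert=2\sum_{i=2}^{2t}(\lambda_i+4)=16t-14$, using that the sum of all eigenvalues is $0$ and the Perron eigenvalue is $3$. Collecting every contribution then gives
\begin{equation*}
E_{\epsilon}(L^{2}(G_j)\vee K_1)=2\sqrt{(6t-7)^{2}+6t}+24t-14,
\end{equation*}
which depends only on $t$, so the two graphs are $\epsilon$-equienergetic. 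To see that they are not $\epsilon$-cospectral I would cancel the common parts, namely the two special roots together with the blocks $-2^{(t)}$ and $2^{(3t)}$, from the two spectra; equality of the full spectra would then force the multisets $\{-2(\lambda_i+4)\}$ and $\{-2(\beta_i+4)\}$ to agree, and since $\lambda\mapsto -2(\lambda+4)$ is injective this would give $spec(G_1)=spec(G_2)$, contradicting the choice of $G_1,G_2$.

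I expect the main obstacle to be purely bookkeeping in this final step: the value $-2$ can occur both in the common block and among the $-2(\lambda_i+4)$, precisely when some $\lambda_i=-3$ (a bipartite component), so one must cancel the shared multiplicities carefully before comparing the remaining eigenvalues. Substantively there is no difficulty, and non-isomorphism follows from non-cospectrality; the whole argument is a direct specialization of Theorem~\ref{GjoinK1} combined with the trace identity for cubic graphs, exactly parallel to the preceding $12t$-vertex theorem.
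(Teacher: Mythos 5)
Your proposal is correct and follows essentially the same route as the paper: the same construction $L^{2}(G_j)\vee K_{1}$ from non-cospectral cubic graphs via Lemma~\ref{cubicnoncospectral}, the same application of Lemma~\ref{2linegraph} and Theorem~\ref{GjoinK1}, and the same resulting energy $24t-14+2\sqrt{(6t-7)^{2}+6t}$. In fact you are slightly more thorough than the paper, which stops after exhibiting the two spectra and the common energy: you explicitly verify non-$\epsilon$-cospectrality by multiset cancellation (including the possible overlap of $-2(\lambda_i+4)$ with the common block $-2$ when $\lambda_i=-3$) and justify the absolute-value computation via the bound $\lambda_i\in[-3,3]$, both of which the paper leaves implicit.
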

Proof. Let $G_{1}$ and $G_{2}$ are two non-cospectral cubic graphs on $2t$ vertices, as in Lemma \ref{cubicnoncospectral} with $spec (G_{1})=\{3,\lambda_{2},\ldots, \lambda_{2t}\}$ and $spec (G_{2})=\{3, \beta_{2},\ldots, \beta_{2t}\}$. Then using Lemma \ref{cubicnoncospectral} and Theorem \ref{GjoinK1},\begin{align*}
    spec_{\epsilon}(L^{2}(G_{1})\vee K_{1})&=\begin{pmatrix}
        (6t-7)\pm \sqrt{(6t-7)^{2}+6t}& -2(\lambda_{i}+4)&-2&2\\
        1& 1& t& 3t
    \end{pmatrix}, i=2,\ldots, 2t.\\
    spec_{\epsilon}(L^{2}(G_{2})\vee K_{1})&=\begin{pmatrix}
        (6t-7)\pm \sqrt{(6t-7)^{2}+6t}& -2(\beta_{j}+4)&-2&2\\
        1& 1& t& 3t
    \end{pmatrix}, j=2,\ldots, 2t.
\end{align*}
Then we have \begin{equation*}
    E_{\epsilon}(L^{2}(G_{1})\vee K_{1})=E_{\epsilon}(L^{2}(G_{2})\vee K_{1})=24t-14+2\sqrt{(6t-7)^{2}+6t}.
\end{equation*}
\begin{example}
Let $G_{1}$ and $G_{2}$ are two non cospectral cubic graphs as in figure $1$.
Now, $L^{2}(G_{l}) \vee K_{1}$, $l=1, 2$ is a graph having 19 vertices. Also, its eccentricity spectrum is given by, \begin{align*}
    spec_{\epsilon}(L^{2}(G_{1}) \vee K_{1})&=\begin{pmatrix}
        11\pm\sqrt{139}&-10&-8&-4&-2&2\\
        1&1&2&2&3&9
    \end{pmatrix}\\
    spec_{\epsilon}(L^{2}(G_{2}) \vee K_{1})&=\begin{pmatrix}
        11\pm\sqrt{139}&-8&-2&2\\
        1&4&4&9
    \end{pmatrix}.\\
 \text{ Then we have, }   
  E_{\epsilon}(L^{2}(G_{1}) \vee K_{1})&=E_{\epsilon}(L^{2}(G_{2})\vee K_{1})=81.579.
\end{align*}    
\end{example}

\begin{figure}[H]
 \centering
\begin{tikzpicture}[scale=.5]
\filldraw[fill=black](4,0)circle(0.1cm); 
\draw(4,0)--(-4,0);
\draw(4,0)--(-0.695,3.939);
\draw(4,0)--(3.064,2.571);
\draw(4,0)--(-3.064,2.571);
\draw(4,0)--(3.759,-1.368);
\draw(4,0)--(3.759,1.368);

\filldraw[fill=black](-4,0)circle(0.1cm); 
\draw(-4,0)--(-0.695,3.939);
\draw(-4,0)--(-3.064,-2.571);
\draw(-4,0)--(-3.064,2.571);
\draw(-4,0)--(-3.759,1.368);
\draw(-4,0)--(-3.759,-1.368);

\filldraw[fill=black](3.759,1.368)circle(0.1cm);
\draw(3.759,1.368)--(-2,-3.464);
\draw(3.759,1.368)--(3.064,-2.571);
\draw(3.759,1.368)--(3.064,2.571);
\draw(3.759,1.368)--(3.759,-1.368);
\draw(3.759,1.368)--(-3.759,-1.368);

\filldraw[fill=black](-3.759,1.368)circle(0.1cm);
\draw(-3.759,1.368)--(-2,3.464);
\draw(-3.759,1.368)--(2,3.464);
\draw(-3.759,1.368)--(-3.064,-2.571);
\draw(-3.759,1.368)--(-3.064,2.571);
\draw(-3.759,1.368)--(-3.759,-1.368);

\filldraw[fill=black](3.759,-1.368)circle(0.1cm);
\draw(3.759,-1.368)--(0.695,-3.939);
\draw(3.759,-1.368)--(2,-3.464);
\draw(3.759,-1.368)--(3.064,-2.571);
\draw(3.759,-1.368)--(3.064,2.571);

\filldraw[fill=black](-3.759,-1.368)circle(0.1cm);
\draw(-3.759,-1.368)--(-2,-3.464);
\draw(-3.759,-1.368)--(3.064,-2.571);
\draw(-3.759,-1.368)--(-3.064,-2.571);

\filldraw[fill=black](3.064,2.571)circle(0.1cm);
\draw(3.064,2.571)--(2,3.464);
\draw(3.064,2.571)--(0.695,3.939);
\draw(3.064,2.571)--(-0.695,3.939);

\filldraw[fill=black](-3.064,2.571)circle(0.1cm);
\draw(-3.064,2.571)--(-2,3.464);
\draw(-3.064,2.571)--(-0.695,3.939);
\draw(-3.064,2.571)--(2,3.464);

\filldraw[fill=black](3.064,-2.571)circle(0.1cm);
\draw(3.064,-2.571)--(2,-3.464);
\draw(3.064,-2.571)--(0.695,-3.939);
\draw(3.064,-2.571)--(-2,-3.464);

\filldraw[fill=black](-3.064,-2.571)circle(0.1cm);
\draw(-3.064,-2.571)--(-2,-3.464);
\draw(-3.064,-2.571)--(-0.695,-3.939);
\draw(-3.064,-2.571)--(0.695,-3.939);

\filldraw[fill=black](2,3.464)circle(0.1cm);
\draw(2,3.464)--(-2,3.464);
\draw(2,3.464)--(-0.695,3.939);
\draw(2,3.464)--(0.695,3.939);

\filldraw[fill=black](-2,3.464)circle(0.1cm);
\draw(-2,3.464)--(2,-3.464);
\draw(-2,3.464)--(-0.695,-3.939);
\draw(-2,3.464)--(0.695,3.939);

\filldraw[fill=black](2,-3.464)circle(0.1cm);
\draw(2,-3.464)--(0.695,-3.939);
\draw(2,-3.464)--(-0.695,-3.939);
\draw(2,-3.464)--(0.695,3.939);

\filldraw[fill=black](-2,-3.464)circle(0.1cm);
\draw(-2,-3.464)--(-0.695,-3.939);
\draw(-2,-3.464)--(0.695,-3.939);

\filldraw[fill=black](0.695,3.939)circle(0.1cm);
\draw(0.695,3.939)--(-0.695,3.939);
\draw(0.695,3.939)--(-0.695,-3.939);

\filldraw[fill=black](-0.695,3.939)circle(0.1cm);

\filldraw[fill=black](0.695,-3.939)circle(0.1cm);
\draw(0.695,-3.939)--(-0.695,-3.939);

\filldraw[fill=black](-0.695,-3.939)circle(0.1cm);

\filldraw[fill=black](8,1)circle(0.1cm);
\draw(4,0)--(8,1);
\draw(-4,0)--(8,1);
\draw(3.759,1.368)--(8,1);
\draw(-3.759,1.368)--(8,1);
\draw(3.759,-1.368)--(8,1);
\draw(-3.759,-1.368)--(8,1);
\draw(3.064,2.571)--(8,1);
\draw(-3.064,2.571)--(8,1);
\draw(3.064,-2.571)--(8,1);
\draw(-3.064,-2.571)--(8,1);
\draw(2,3.464)--(8,1);
\draw(-2,3.464)--(8,1);
\draw(2,-3.464)--(8,1);
\draw(-2,-3.464)--(8,1);
\draw(0.695,3.939)--(8,1);
\draw(-0.695,3.939)--(8,1);
\draw(0.695,-3.939)--(8,1);
\draw(-0.695,-3.939)--(8,1);

\filldraw[fill=black](17,0)circle(0.1cm); 
\draw(17,0)--(16.759,1.632);
\draw(17,0)--(16.759,-1.368);
\draw(17,0)--(16.064,-2.571);
\draw(17,0)--(15,-3.464);
\draw(17,0)--(13.695,-3.939);
\draw(17,0)--(9.936,2.571);

\filldraw[fill=black](9,0)circle(0.1cm); 
\draw(9,0)--(9.241,1.632);
\draw(9,0)--(11,3.464);
\draw(9,0)--(15,3.464);
\draw(9,0)--(16.064,2.571);
\draw(9,0)--(9.241,-1.368);
\draw(9,0)--(15,-3.464);

\filldraw[fill=black](16.759,1.632)circle(0.1cm);
\draw(16.759,1.632)--(16.064,2.571);
\draw(16.759,1.632)--(13.695,-3.939);
\draw(16.759,1.632)--(12.305,-3.939);
\draw(16.759,1.632)--(9.241,-1.368);
\draw(16.759,1.632)--(9.936,2.571);

\filldraw[fill=black](9.241,1.632)circle(0.1cm);
\draw(9.241,1.632)--(9.241,-1.368);
\draw(9.241,1.632)--(9.936,-2.571);
\draw(9.241,1.632)--(15,-3.464);
\draw(9.241,1.632)--(16.759,-1.368);
\draw(9.241,1.632)--(13.695,3.939);

\filldraw[fill=black](16.759,-1.368)circle(0.1cm);
\draw(16.759,-1.368)--(16.064,-2.571);
\draw(16.759,-1.368)--(15,-3.464);
\draw(16.759,-1.368)--(9.936,-2.571);
\draw(16.759,-1.368)--(13.695,3.939);

\filldraw[fill=black](9.241,-1.368)circle(0.1cm);
\draw(9.241,-1.368)--(16.064,2.571);
\draw(9.241,-1.368)--(15,-3.464);
\draw(9.241,-1.368)--(12.305,-3.939);

\filldraw[fill=black](16.064,2.571)circle(0.1cm);
\draw(16.064,2.571)--(15,3.464);
\draw(16.064,2.571)--(11,3.464);
\draw(16.064,2.571)--(12.305,-3.939);

\filldraw[fill=black](9.936,2.571)circle(0.1cm);
\draw(9.936,2.571)--(11,3.464);
\draw(9.936,2.571)--(12.305,3.939);
\draw(9.936,2.571)--(16.064,-2.571);
\draw(9.936,2.571)--(13.695,-3.939);

\filldraw[fill=black](16.064,-2.571)circle(0.1cm);
\draw(16.064,-2.571)--(15,-3.464);
\draw(16.064,-2.571)--(12.305,3.939);
\draw(16.064,-2.571)--(11,3.464);

\filldraw[fill=black](9.936,-2.571)circle(0.1cm);
\draw(9.936,-2.571)--(11,-3.464);
\draw(9.936,-2.571)--(12.305,-3.939);
\draw(9.936,-2.571)--(13.695,-3.939);
\draw(9.936,-2.571)--(13.695,3.939);

\filldraw[fill=black](15,3.464)circle(0.1cm);
\draw(15,3.464)--(13.695,3.939);
\draw(15,3.464)--(12.305,3.939);
\draw(15,3.464)--(11,3.464);
\draw(15,3.464)--(11,-3.464);

\filldraw[fill=black](11,3.464)circle(0.1cm);
\draw(11,3.464)--(12.305,3.939);

\filldraw[fill=black](15,-3.464)circle(0.1cm);

\filldraw[fill=black](11,-3.464)circle(0.1cm);
\draw(11,-3.464)--(12.305,-3.939);
\draw(11,-3.464)--(13.695,-3.939);
\draw(11,-3.464)--(12.305,3.939);
\draw(11,-3.464)--(13.695,3.939);

\filldraw[fill=black](13.695,3.939)circle(0.1cm);
\draw(13.695,3.939)--(12.305,3.939);

\filldraw[fill=black](12.305,3.939)circle(0.1cm);

\filldraw[fill=black](13.695,-3.939)circle(0.1cm);
\draw(13.695,-3.939)--(12.305,-3.939);

\filldraw[fill=black](12.305,-3.939)circle(0.1cm);
\filldraw[fill=black](21,1)circle(0.1cm);
\draw(17,0)--(21,1);
\draw(9,0)--(21,1);
\draw(16.759,1.632)--(21,1);
\draw(9.241,1.632)--(21,1);
\draw(16.759,-1.368)--(21,1);
\draw(9.241,-1.368)--(21,1);
\draw(16.064,2.571)--(21,1);
\draw(9.936,2.571)--(21,1);
\draw(16.064,-2.571)--(21,1);
\draw(9.936,-2.571)--(21,1);
\draw(15,3.464)--(21,1);
\draw(11,3.464)--(21,1);
\draw(15,-3.464)--(21,1);
\draw(11,-3.464)--(21,1);
\draw(13.695,3.939)--(21,1);
\draw(12.305,3.939)--(21,1);
\draw(13.695,-3.939)--(21,1);
\draw(12.305,-3.939)--(21,1);

\end{tikzpicture}
 \caption{non isomorphic non $\epsilon-$cospectral $\epsilon$equienergetic graphs on 19-vertices with $E_{\epsilon}=81.579$}
\label{fig:3}
\end{figure}
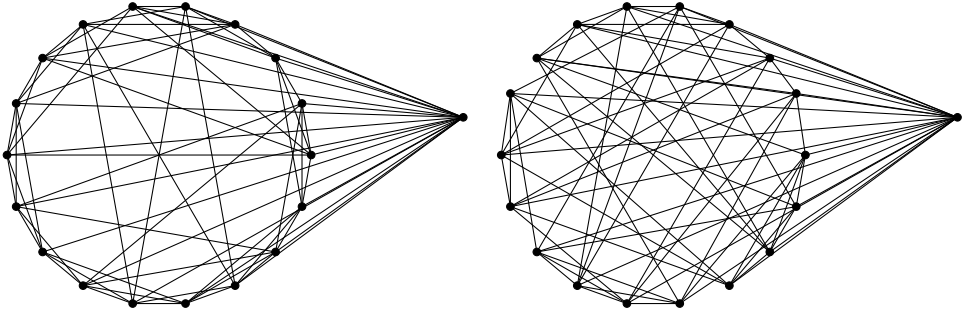

\noindent Next, we give a method for the construction of new pairs and triplets of non $\epsilon-$    cospectral $\epsilon-$equienergetic graphs.
\begin{theorem}\label{equienergetic}
    Let $G_{1}$ and $G_{2}$ be two  non cospectral cubic graph on $2t$ vertices, $t\geq2$, and $G$ be any $r-$ regular graph with $r\geq 2$. Let $H_{1}=L^{2}(G_{1})$ and $H_{2}=L^{2}(G_{2})$.
    Then \begin{enumerate}
        \item $G\veedot H_{1}$ and $G\veedot H_{2}$ are non $\epsilon-$cospectral $\epsilon-$equienergetic graphs.
        \item $G\veebar H_{1}$ and $G\veebar H_{2}$ are non $\epsilon-$cospectral $\epsilon-$equienergetic graphs.
        \item $G\veedot (H_{1}\cup H_{2})$ and $G\veedot (H_{1}\cup H_{1})$ and $G\veedot(H_{2}\cup H_{2})$ are non $\epsilon-$cospectral $\epsilon-$equienergetic graphs.
         \item $G\veebar (H_{1}\cup H_{2})$ and $G\veebar (H_{1}\cup H_{1})$ and $G\veebar(H_{2}\cup H_{2})$ are non $\epsilon-$cospectral $\epsilon-$equienergetic graphs.
    \end{enumerate}
\end{theorem}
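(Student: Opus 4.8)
The plan is to reduce all four statements to a single mechanism: for each family I apply the relevant $\epsilon$-spectrum formula already proved in the paper, observe that the graphs being compared differ in exactly one block of $\epsilon$-eigenvalues, and then show that this block forces non-$\epsilon$-cospectrality while its contribution to the energy is pinned down by a trace identity.

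First I would record the spectral data of the building blocks. Since $G_{1},G_{2}$ are cubic on $2t$ vertices, each has $3t$ edges, so $H_{j}=L^{2}(G_{j})$ is $6$-regular on $6t$ vertices. Writing $spec(G_{1})=\{3,\lambda_{2},\dots,\lambda_{2t}\}$ and $spec(G_{2})=\{3,\mu_{2},\dots,\mu_{2t}\}$, Lemma~\ref{2linegraph} gives
\[
spec(H_{1})=\{6,\ \lambda_{i}+3\ (i=2,\dots,2t),\ 0^{(t)},\ (-2)^{(3t)}\},
\]
and likewise for $H_{2}$ with $\mu_{i}$ in place of $\lambda_{i}$. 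Thus $H_{1}$ and $H_{2}$ share order, regularity, and every eigenvalue except the middle block $\{\lambda_{i}+3\}$ versus $\{\mu_{i}+3\}$, which differ precisely because $G_{1},G_{2}$ are non-cospectral (Lemma~\ref{cubicnoncospectral}).

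For part (1) I would feed $G$ as the first factor and $H_{j}$ as the second into Theorem~\ref{subdivisionvertex}. Items (1), (3), (4) there depend only on $G$, and item (5), the equitable quotient matrix, depends on $H_{j}$ only through $p_{2}=6t$ and $r_{2}=6$; hence all of these are identical for $j=1,2$. The only block that changes is item (2), namely $-2(1+\nu)$ as $\nu$ runs over $spec(H_{j})\setminus\{6\}$, that is
\[
\{-2(\lambda_{i}+4)\}_{i=2}^{2t}\ \cup\ \{-2\}^{(t)}\ \cup\ \{2\}^{(3t)},
\]
whose only non-constant part is $\{-2(\lambda_{i}+4)\}$. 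Since $x\mapsto-2(1+x)$ is injective and $\{\lambda_{i}\}\neq\{\mu_{i}\}$, this block differs, giving non-$\epsilon$-cospectrality. For equienergeticity, the energy contribution of this block is $2\sum_{i=2}^{2t}|\lambda_{i}+4|+2t+6t$; because every eigenvalue of a cubic graph lies in $[-3,3]$ we have $\lambda_{i}+4>0$, so $|\lambda_{i}+4|=\lambda_{i}+4$ and the sum equals $2\sum_{i=2}^{2t}\lambda_{i}+8(2t-1)+8t$. The trace identity $\sum_{i=2}^{2t}\lambda_{i}=\operatorname{tr}A(G_{1})-3=-3$ (and the same for $\mu_{i}$) makes this value independent of the chosen cubic graph, so the two $\epsilon$-energies agree. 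Part (2) is identical with Theorem~\ref{subdivisionedge} in place of Theorem~\ref{subdivisionvertex}: items (1),(3),(4) depend only on $G$, the quotient matrix only on $p_{2},r_{2}$, and the varying block is again $\{-2(1+\nu)\}$.

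For parts (3) and (4) I would apply Theorems~\ref{subdivisionvertexunion} and~\ref{subdivisionedgeunion}. Here items (2) and (3) carry the eigenvalues of the two union components, each contributing a copy of $B:=\{-2(1+\nu):\nu\in spec(H)\setminus\{6\}\}$; everything else depends only on $G$ and on the common order and regularity of the components, so (including the $4\times4$ quotient matrix) it is the same for $H_{1}\cup H_{2}$, $H_{1}\cup H_{1}$, and $H_{2}\cup H_{2}$. Writing $B_{1},B_{2}$ for the blocks coming from $H_{1},H_{2}$, the three graphs carry, as $\epsilon$-eigenvalue multisets, $B_{1}$ together with $B_{2}$, two copies of $B_{1}$, and two copies of $B_{2}$; since $B_{1}\neq B_{2}$, multiset cancellation makes these pairwise distinct, hence pairwise non-$\epsilon$-cospectral. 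Each of $B_{1},B_{2}$ contributes the same energy by the trace computation above, so all three graphs share one value of $E_{\epsilon}$ and are $\epsilon$-equienergetic. The main obstacle is the equienergetic claim rather than non-cospectrality: one must verify that the absolute values collapse, which rests on the two facts that $\lambda_{i}+4>0$ for cubic eigenvalues and that $\sum_{i}\lambda_{i}$ is fixed by the trace; equally essential is checking that the quotient matrices are literally identical across each family, which holds only because $H_{1}$ and $H_{2}$ share both order $6t$ and regularity $6$.
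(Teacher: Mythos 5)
Your proposal is correct and follows essentially the same route as the paper, whose proof of this theorem is simply the citation ``follows from Lemma \ref{2linegraph} and Theorems \ref{subdivisionvertex}, \ref{subdivisionedge}, \ref{subdivisionvertexunion}, \ref{subdivisionedgeunion}.'' In fact, you supply the details the paper leaves implicit: the identification of the unique varying block $\{-2(\lambda_i+4)\}$, the multiset-cancellation argument for non-$\epsilon$-cospectrality, and the positivity ($\lambda_i+4>0$) plus trace identity ($\sum_{i\geq 2}\lambda_i=-3$) that force the energies to coincide.
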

\begin{proof}
The proof follows from Lemma \ref{2linegraph}, and  Theorems \ref{subdivisionvertex}, \ref{subdivisionedge}, \ref{subdivisionvertexunion}, \ref{subdivisionedgeunion}.
\end{proof}

\begin{corollary}
  For every $t\geq 3$, there exist 2 pairs of  $non$ $\epsilon-$cospectral $\epsilon-$equienergetic  graph on $6+6t$ vertices.  
\end{corollary}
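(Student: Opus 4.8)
The plan is to specialize Theorem \ref{equienergetic} to a base graph $G$ whose subdivision contributes exactly six vertices, so that the subdivision vertex join and the subdivision edge join both land on $6t+6$ vertices. The natural choice is the triangle $G=K_{3}$: it is $2$-regular, hence it meets the hypothesis $r\geq 2$ of Theorem \ref{equienergetic}, and it has $p=3$ vertices together with $q=3$ edges, so that $S(K_{3})$ has $p+q=6$ vertices.

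First I would fix $t\geq 3$ and invoke Lemma \ref{cubicnoncospectral} to obtain a pair of non-cospectral cubic graphs $G_{1},G_{2}$ on $2t$ vertices, and set $H_{1}=L^{2}(G_{1})$ and $H_{2}=L^{2}(G_{2})$ exactly as in Theorem \ref{equienergetic}. Next I would pin down the orders. Since each $G_{i}$ is $3$-regular on $2t$ vertices, it has $3t$ edges, so $L(G_{i})$ is a $4$-regular graph on $3t$ vertices and therefore has $6t$ edges; consequently $H_{i}=L^{2}(G_{i})$ has $6t$ vertices (and is $6$-regular, consistent with the leading eigenvalue $4r-6=6$ in Lemma \ref{2linegraph} for $r=3$). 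Because $S(K_{3})$ contributes $6$ vertices, both $K_{3}\veedot H_{i}$ and $K_{3}\veebar H_{i}$ have $6+6t$ vertices for $i=1,2$.

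Finally I would read off the two pairs directly from Theorem \ref{equienergetic}: part (1) asserts that $K_{3}\veedot H_{1}$ and $K_{3}\veedot H_{2}$ are non $\epsilon$-cospectral and $\epsilon$-equienergetic, while part (2) asserts the same for $K_{3}\veebar H_{1}$ and $K_{3}\veebar H_{2}$. Since non $\epsilon$-cospectral graphs of the same order are in particular non-isomorphic, these constitute two genuine pairs of non $\epsilon$-cospectral $\epsilon$-equienergetic graphs, each on $6+6t$ vertices, which is exactly the claim. The requirement $t\geq 3$ here is precisely what Lemma \ref{cubicnoncospectral} needs to guarantee the existence of $G_{1},G_{2}$.

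All the spectral content, namely that each pair is $\epsilon$-equienergetic but not $\epsilon$-cospectral, is already delivered by Theorem \ref{equienergetic} (which itself rests on Lemma \ref{2linegraph} and the $\epsilon$-spectrum formulas of Theorems \ref{subdivisionvertex} and \ref{subdivisionedge}), so no new eigenvalue computation is needed. The only genuine step is the arithmetic of orders, together with the observation that $K_{3}$ is the small regular base graph with $p+q=6$ that still satisfies $r\geq 2$; this is where I would be most careful, since a miscount in the edge number of $L(G_{i})$ or in the size of $S(K_{3})$ would shift the target order away from $6+6t$. Beyond this bookkeeping I do not anticipate any real obstacle.
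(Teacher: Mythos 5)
Your proposal is correct and takes essentially the same route as the paper: the paper's own (very terse) proof likewise specializes Theorem \ref{equienergetic} to the base graph $K_{3}$ with $H_{i}=L^{2}(G_{i})$, reading off the two pairs $(K_{3}\veedot H_{1},\,K_{3}\veedot H_{2})$ and $(K_{3}\veebar H_{1},\,K_{3}\veebar H_{2})$ from parts (1) and (2). Your explicit order bookkeeping ($H_{i}$ has $6t$ vertices, $S(K_{3})$ contributes $6$) and the remark that $t\geq 3$ comes from Lemma \ref{cubicnoncospectral} merely supply details the paper leaves implicit.
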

\begin{proof}
   By  Theorem \ref{equienergetic} we have \begin{align*}
   E_{\epsilon}(K_{3}\veebar H_{1})=E_{\epsilon}(K_{3}\veebar H_{2})\\
   \text{ and } 
     E_{\epsilon}(K_{3}\veedot H_{1})=E_{\epsilon}(K_{3}\veedot H_{2}).
\end{align*}
\end{proof}
\begin{corollary}
    For every $t\geq 3$, there exist $2$ triplets of $non$ $\epsilon-$cospectral $\epsilon-$equienergetic  graphs on $6+12t$ vertices.
\end{corollary}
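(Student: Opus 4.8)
The plan is to instantiate Theorem~\ref{equienergetic} with the base graph $G=K_{3}$ and read off its parts~(3) and~(4) as the two required triplets.

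First I would verify the hypotheses. The graph $K_{3}$ is $2$-regular, so the condition $r\geq 2$ is met, and it has $p_{0}=3$ vertices and $q_{0}=3$ edges. By Lemma~\ref{cubicnoncospectral}, a pair of non-cospectral cubic graphs $G_{1},G_{2}$ on $2t$ vertices exists precisely when $t\geq 3$; this is the source of the bound $t\geq 3$ in the statement, which is stronger than the $t\geq 2$ assumed in Theorem~\ref{equienergetic}. Setting $H_{1}=L^{2}(G_{1})$ and $H_{2}=L^{2}(G_{2})$, Lemma~\ref{2linegraph} with $r=3$ shows that each $H_{i}$ is $6$-regular on $6t$ vertices.

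Next I would apply Theorem~\ref{equienergetic}(3), which asserts that $K_{3}\veedot(H_{1}\cup H_{2})$, $K_{3}\veedot(H_{1}\cup H_{1})$, and $K_{3}\veedot(H_{2}\cup H_{2})$ form a triplet of pairwise non-$\epsilon$-cospectral $\epsilon$-equienergetic graphs, and likewise Theorem~\ref{equienergetic}(4) to obtain the subdivision-edge triplet $K_{3}\veebar(H_{1}\cup H_{2})$, $K_{3}\veebar(H_{1}\cup H_{1})$, $K_{3}\veebar(H_{2}\cup H_{2})$. Together these furnish the two triplets. To confirm the order, note that $S(K_{3})$ has $p_{0}+q_{0}=6$ vertices, while the disjoint union of two $6t$-vertex copies of $L^{2}(\cdot)$ contributes $12t$ vertices; since in both constructions the vertex set is $V(S(K_{3}))\cup V(H_{i}\cup H_{j})$, every graph in both triplets has $6+12t$ vertices, as claimed.

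There is no genuine obstacle here: the analytic content is carried entirely by Theorem~\ref{equienergetic}. The only point worth recording is why the three members of each triplet are truly non-$\epsilon$-cospectral while sharing the same $\epsilon$-energy. In Theorems~\ref{subdivisionvertexunion} and~\ref{subdivisionedgeunion} the eigenvalues $-2(1+\beta_{k})$ and $-2(1+\gamma_{l})$ record the full non-Perron $A$-spectra of the two joined components, whereas the remaining $\epsilon$-eigenvalues (those coming from $K_{3}$ and from the $4\times 4$ equitable quotient matrix) are identical across a triplet because $H_{1}$ and $H_{2}$ have the same order and regularity. Since $G_{1}$ and $G_{2}$ are non-cospectral, $H_{1}$ and $H_{2}$ differ in the eigenvalue block $\lambda_{k}+3$ supplied by Lemma~\ref{2linegraph}, so the spectra attached to $H_{1}\cup H_{1}$, $H_{1}\cup H_{2}$, and $H_{2}\cup H_{2}$ are pairwise distinct; yet the contribution of each component to the $\epsilon$-energy reduces, using the trace identity $\sum_{k}\lambda_{k}=0$ for a cubic graph, to a quantity depending only on $t$, which is exactly what keeps all three $\epsilon$-energies equal.
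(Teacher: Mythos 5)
Your proposal is correct and follows essentially the same route as the paper: instantiate Theorem~\ref{equienergetic}, parts (3) and (4), with $G=K_{3}$ and $H_{i}=L^{2}(G_{i})$ for the non-cospectral cubic pair from Lemma~\ref{cubicnoncospectral}, yielding the subdivision-vertex and subdivision-edge triplets on $6+12t$ vertices. The paper's proof is just the two-line citation of that theorem; your added verifications (the $t\geq 3$ bound coming from Lemma~\ref{cubicnoncospectral}, the vertex count, and the trace argument for equal energies) are consistent with, and merely flesh out, the same argument.
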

\begin{proof}
By  Theorem \ref{equienergetic} we have
    \begin{align*}
        E_{\epsilon}(K_{3}\veebar(H_{1}\cup H_{1}))=E_{\epsilon}(K_{3}\veebar(H_{2}\cup H_{2}))=E_{\epsilon}(K_{3}\veebar(H_{1}\cup H_{2})),\\
    \text{ and } 
        E_{\epsilon}(K_{3}\veedot (H_{1}\cup H_{1}))=E_{\epsilon}(K_{3}\veedot (H_{2}\cup H_{2}))=E_{\epsilon}(K_{3}\veedot (H_{1}\cup H_{2})).
    \end{align*}
\end{proof}

\subsection{Some $\epsilon-$integral graphs}
The problem of finding $A-$integral graphs is also an interesting one in spectral graph theory. Motivated by this, in this section, using the results obtained in previous sections, we give some methods to construct some new families of $\epsilon-$integral graphs.


\begin{proposition}
    Let $G_{j}$ be a $r_{j}$ ($r_{1}\geq 2$) regular, $(p_{j}, q_{j})$ graph. Then $G_{1}\veedot G_{2}$ is $\epsilon-$integral graph if and only if  $G_{2}$ is $A-$integral,  roots of the equation $3t^{2}+4t((\lambda_{i}+r_{1})-1)-3(\lambda_{i}+r_{1})=0$, $i=2,..., p_{1}$, and eigenvalues of the matrix 
    $\begin{pmatrix}
    0&3q_{1}-3r_{1}&0\\
    3p_{1}-6&4q_{1}-8r_{1}+4&2p_{2}\\
    0&2q_{1}&2(p_{2}-r_{2}-1
    \end{pmatrix}$ 
    are integers.
\end{proposition}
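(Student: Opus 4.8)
The plan is to read the entire $\epsilon$-spectrum of $G_1 \veedot G_2$ off Theorem \ref{subdivisionvertex} and then impose integrality block by block, since a graph is $\epsilon$-integral exactly when every $\epsilon$-eigenvalue lies in $\mathbb{Z}$. Theorem \ref{subdivisionvertex} partitions the spectrum into five families: the constant $4$ with multiplicity $q_1-p_1$; the values $-2(1+\beta_k)$ for $k=2,\ldots,p_2$; the two families $-3t_1+4-4(\lambda_i+r_1)$ and $-3t_2+4-4(\lambda_i+r_1)$ arising from the quadratic in $t$; and the three eigenvalues of the quotient matrix $F$. The value $4$ is automatically an integer and contributes nothing. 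Since $F$ has integer entries, its eigenvalues are algebraic integers, so they are integral iff rational iff they lie in $\mathbb{Z}$; as they form a sub-collection of the $\epsilon$-spectrum, their integrality is both forced by $\epsilon$-integrality and, together with the other blocks, part of what guarantees it. Thus the genuine content lies in the two remaining families.

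First I would dispose of the block $-2(1+\beta_k)$. Each $\beta_k$ is an eigenvalue of the adjacency matrix of $G_2$, hence an algebraic integer, so $-2(1+\beta_k)\in\mathbb{Z}$ forces $2\beta_k\in\mathbb{Z}$, and a rational algebraic integer is an integer, whence $\beta_k\in\mathbb{Z}$. Because $r_2$ is a vertex degree and is therefore already an integer, the full $A$-spectrum $\{r_2,\beta_2,\ldots,\beta_{p_2}\}$ is integral precisely when every $\beta_k$ is; so this block is integral if and only if $G_2$ is $A$-integral. This gives a clean two-way equivalence with condition (a).

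The heart of the argument is the pair of families indexed by $\lambda_i$. Here I would exploit the two defining relations from the proof of Theorem \ref{subdivisionvertex}, namely $-3(\lambda_i+r_1)=\mu t$ and $\mu=-3t+4-4(\lambda_i+r_1)$, together with Vieta's formulas for $3t^2+4t((\lambda_i+r_1)-1)-3(\lambda_i+r_1)=0$, which give $t_1 t_2=-(\lambda_i+r_1)$ and $t_1+t_2=-\tfrac{4((\lambda_i+r_1)-1)}{3}$. The easy direction is immediate: if $t_1,t_2\in\mathbb{Z}$ then $\lambda_i+r_1=-t_1 t_2\in\mathbb{Z}$, and hence both eigenvalues $-3t_j+4-4(\lambda_i+r_1)$ are integers. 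For the converse I would first note that summing the two eigenvalues yields $\mu_1+\mu_2=4-4(\lambda_i+r_1)$, so once they are known to be integers $\lambda_i+r_1$ is rational; being an algebraic integer it is then an integer, and solving $\mu=-3t+4-4(\lambda_i+r_1)$ for $t$ gives $3t_j=4-4(\lambda_i+r_1)-\mu_j$.

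The step I expect to be the main obstacle is exactly this last point: upgrading ``the two eigenvalues are integers'' to ``the roots $t_1,t_2$ are integers.'' The relation $3t_j=4-4(\lambda_i+r_1)-\mu_j$ only shows a priori that $3t_j\in\mathbb{Z}$, so one must control the factor of $3$ and rule out roots such as $t=\tfrac{4}{3}$, which can appear precisely when $\lambda_i+r_1$ makes $3t^2+4t((\lambda_i+r_1)-1)-3(\lambda_i+r_1)=0$ reducible over $\mathbb{Q}$ without integer roots (for instance when $\lambda_i+r_1=0$). This divisibility bookkeeping is the delicate part of the equivalence and the point at which the argument must be made with care. Once it is settled, assembling the four effective blocks yields the stated characterization: $G_1\veedot G_2$ is $\epsilon$-integral if and only if $G_2$ is $A$-integral, the roots of the displayed quadratic are integers for every $i$, and the eigenvalues of $F$ are integers.
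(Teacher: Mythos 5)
You have not given a complete proof: the step you yourself flag as ``the main obstacle'' --- passing from integrality of the two $\epsilon$-eigenvalues $-3t_j+4-4(\lambda_i+r_1)$ to integrality of the roots $t_j$ --- is left unresolved, and it is precisely the ``only if'' direction of the proposition. Moreover, it cannot be settled by local divisibility bookkeeping, because block by block the equivalence you are trying to prove is false. Your own example is the witness: if $\lambda_i+r_1=0$ (which happens for every connected bipartite regular $G_1$, since $-r_1$ is then an adjacency eigenvalue), the quadratic becomes $3t^2-4t=0$ with roots $0$ and $4/3$, yet the two corresponding $\epsilon$-eigenvalues are $4$ and $0$, both integers. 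So for a single index $i$, integrality of the two $\epsilon$-eigenvalues does not force integrality of $t_1,t_2$, and no amount of care with the factor of $3$ in $3t_j=4-4(\lambda_i+r_1)-\mu_j$ will change that. The converse direction therefore needs a global argument about which values $s_i=\lambda_i+r_1$ can actually occur simultaneously.

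The rescue is a rigidity phenomenon your proposal never invokes. The two eigenvalues attached to $\lambda_i$ are the roots of $\mu^2-(4-4s_i)\mu-9s_i=0$, whose discriminant is $16s_i^2+4s_i+16$; for integers $s_i\ge 4$ this lies strictly between $(4s_i)^2$ and $(4s_i+1)^2$, and for $s_i\in\{2,3\}$ it equals $88$ or $172$, so it is a perfect square only for $s_i\in\{0,1\}$. Hence $\epsilon$-integrality of $G_1\veedot G_2$ forces every $\lambda_i\in\{-r_1,\,1-r_1\}$ for $i\ge 2$. The troublesome case $s_i=0$ is then impossible: it would make the connected graph $G_1$ bipartite, so its spectrum would be symmetric about $0$, which is incompatible with all eigenvalues lying in $\{r_1,\,1-r_1,\,-r_1\}$ when $r_1\ge 2$ (symmetry would require $r_1-1$ to be an eigenvalue as well, forcing $r_1=1$ or a complete graph $K_2$). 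Thus all $s_i=1$, the quadratic degenerates to $3t^2-3=0$ with roots $\pm 1$, and only at this point do the roots come out integral, completing the ``only if'' direction. Without this discriminant-plus-bipartiteness argument your proof is genuinely incomplete. For comparison, the paper states this proposition with no proof at all, treating it as immediate from Theorem \ref{subdivisionvertex}; your analysis correctly shows it is not immediate, but it stops at identifying the obstruction rather than overcoming it.
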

\begin{proposition}
    Let $G_{j}$ be a $r_{j}$ ($r_{1}\geq 2$) regular graph,  ($p_{i}, q_{i}$) graph. Then $G_{1}\veebar G_{2}$ is $\epsilon-$integral graph if and only if $G_{2}$ is $A-$integral and roots of the equation $3t^{2}-4(1+\lambda_{i})t-3(\lambda_{i}+r_{1})=0, i=2,\ldots, p_{1}$, and eigenvalues of the matrix 
    $\begin{pmatrix}
    4(p_{1}-1-r_{1})&3(q_{1}-r_{1})&2p_{2}\\
    3(p_{1}-2)&0&0\\
    2p_{1}&0&2(p_{2}-r_{2}-1)
    \end{pmatrix}$ 
    are integers.
\end{proposition}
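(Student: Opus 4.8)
The plan is to read the entire $\epsilon$-spectrum of $G_1\veebar G_2$ directly off Theorem \ref{subdivisionedge} and then impose integrality on each block of eigenvalues separately, since a graph is $\epsilon$-integral precisely when \emph{every} one of its $\epsilon$-eigenvalues lies in $\mathbb{Z}$. By Theorem \ref{subdivisionedge} the spectrum splits into four disjoint groups: the eigenvalue $0$ with multiplicity $q_1-p_1$; the values $-2(1+\gamma_k)$ for $k=2,\dots,p_2$; the two eigenvalues $-3t_1,-3t_2$ attached to each $\lambda_i$ (equivalently $-2(1+\lambda_i)\pm\sqrt{4(1+\lambda_i)^2+9(\lambda_i+r_1)}$), where $t_{1,2}$ are the roots of $3t^2-4(1+\lambda_i)t-3(\lambda_i+r_1)=0$ for $i=2,\dots,p_1$; and the three eigenvalues of the displayed $3\times 3$ quotient matrix $F$. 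A quick count $(q_1-p_1)+(p_2-1)+2(p_1-1)+3=p_1+q_1+p_2$ confirms these account for all eigenvalues of the order-$(p_1+q_1+p_2)$ graph, so $\epsilon$-integrality is equivalent to the simultaneous integrality of the last three groups (the first being trivially integral).

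First I would handle the $G_2$-block. The reverse implication is immediate: if $G_2$ is $A$-integral then every $\gamma_k\in\mathbb{Z}$, so each $-2(1+\gamma_k)\in\mathbb{Z}$. For the forward direction one must upgrade $-2(1+\gamma_k)\in\mathbb{Z}$ to $\gamma_k\in\mathbb{Z}$. The key observation is that each $\gamma_k$, being an eigenvalue of the symmetric $0$--$1$ matrix $A(G_2)$, is a root of a monic polynomial with integer coefficients and hence an algebraic integer; if $-2(1+\gamma_k)\in\mathbb{Z}$ then $\gamma_k$ is rational, and a rational algebraic integer is an ordinary integer. Thus integrality of the whole $G_2$-block forces $\gamma_2,\dots,\gamma_{p_2}\in\mathbb{Z}$, and since $r_2$ is a vertex degree and automatically an integer, this is exactly the assertion that $G_2$ is $A$-integral.

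Next I would treat the remaining two groups, which are essentially definitional once Theorem \ref{subdivisionedge} is in hand. Integrality of the $\lambda_i$-block is precisely the requirement that the eigenvalues $-3t_1,-3t_2$ be integers, which is the condition on the roots of the displayed quadratic recorded in the statement. Integrality of the last block is the requirement that the three eigenvalues of $F$, i.e.\ the roots of its characteristic polynomial, be integers, which is the final listed condition. Collecting all three blocks, every $\epsilon$-eigenvalue is an integer if and only if the stated conditions hold, which is the claimed equivalence.

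I expect the only genuinely nontrivial step to be the algebraic-integer argument in the $G_2$-block: without invoking that a rational algebraic integer is an integer, the condition $-2(1+\gamma_k)\in\mathbb{Z}$ would only yield that the $\gamma_k$ are half-integers, and the ``only if'' direction would be strictly weaker than claimed. The analogous argument (and indeed the entire proof) runs in parallel with the preceding proposition for $G_1\veedot G_2$, so the write-up can mirror that structure, substituting the spectrum from Theorem \ref{subdivisionedge}.
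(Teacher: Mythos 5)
Your proposal is correct in substance and follows exactly the route the paper intends: the paper states this proposition without any proof, treating it as an immediate consequence of Theorem \ref{subdivisionedge}, and your block-by-block integrality analysis of that spectrum (with the eigenvalue count $(q_1-p_1)+(p_2-1)+2(p_1-1)+3=p_1+q_1+p_2$) is precisely the intended argument. Your rational-algebraic-integer step is a genuine improvement: it is exactly what is needed to upgrade $-2(1+\gamma_k)\in\mathbb{Z}$ to $\gamma_k\in\mathbb{Z}$ in the ``only if'' direction, a detail the paper leaves entirely implicit.

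One caveat deserves mention, though it originates in the paper's statement rather than in your argument. The $\epsilon$-eigenvalues attached to $\lambda_i$ are $-3t_1$ and $-3t_2$, not $t_1$ and $t_2$; integrality of $t_{1,2}$ certainly gives integrality of $-3t_{1,2}$, but the converse implication, which your forward direction needs when you declare the two conditions to coincide, does not hold a priori: $-3t\in\mathbb{Z}$ only forces $t\in\tfrac{1}{3}\mathbb{Z}$. Indeed, writing $\mu=-3t$, the quadratic becomes $\mu^{2}+4(1+\lambda_i)\mu-9(\lambda_i+r_1)=0$, and modulo $3$ this only forces $\mu\bigl(\mu+1+\lambda_i\bigr)\equiv 0$, which permits $3\nmid\mu$; for instance $\lambda_i=1$, $r_1=16$ gives the integer eigenvalues $\mu=9$ and $\mu=-17$ while the roots are $t=-3$ and $t=\tfrac{17}{3}$. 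So, to be airtight, either the condition in the statement should be read as ``$-3t_{1},-3t_{2}\in\mathbb{Z}$'' (equivalently $-2(1+\lambda_i)\pm\sqrt{4(1+\lambda_i)^{2}+9(\lambda_i+r_1)}\in\mathbb{Z}$), or your proof should state explicitly that it establishes the equivalence with this corrected condition. The same remark applies to the companion proposition for $G_1\veedot G_2$.
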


Now we  present some $\epsilon-$integral graphs,
\begin{enumerate}
    \item The graph $K_{3}\veedot K_{n}$ is $\epsilon-$integral graphs if and only if $12n+9$ is a perfect square.\\
    For example $K_{3}\veedot K_{6}$, $K_{3}\veedot K_{18}$, $K_{3}\veedot K_{36}$, $K_{3}\veedot K_{60}$, $K_{3}\veedot K_{90}$, $K_{3}\veedot K_{126}$.
    \item The graph $K_{11}\veebar K_{n}$ is $\epsilon-$integral if and only if $44n+3645$ is a perfect square.\\
    For example  $K_{11}\veebar K_{45}$ is $\epsilon-$integral.
    \item Using Theorem \ref{joinunion} \begin{itemize}
        \item If $G$ is a $A-$integral graph then $G\vee (G\cup G)$ is $\epsilon-$integral.
        \item If $G_{i}$, $i=0,1,2$, $r-$ regular, $A-$integral graph then $G_{0}\vee (G_{1}\cup G_{2})$ is $\epsilon-$integral.
        \end{itemize}
    The graphs $K_{n}\vee (K_{m}\cup K_{l})$, $K_{m,m}\vee (K_{m+1}\cup K_{m+1})$, $K_{m+1}\vee (K_{m,m}\cup K_{m,m})$, $K_{m,m}\vee (K_{m+1}\cup K_{m,m})$ are some $\epsilon-$integral graphs.

\end{enumerate}

\section{Conclusion}
Using the graph operations subdivision vertex(edge) join and join of graphs, 
we have added new classes of connected graphs to the existing classes of graphs for which the irreducibility or reducibility of their eccentricity matrix is known. 
 In addition, we analyzed their eccentricity spectra. These results enable us to provide infinitely many  $\epsilon-$cospectral graph pairs. 
  Furthermore, infinitely many pairs and triplets of non $\epsilon-$cospectral $\epsilon-$equienergetic graphs are constructed. Moreover, some new families of  $\epsilon-$integral graphs are obtained.
  
\textbf{Word count: 3178}

\section*{Disclosure statement}
All authors declare that they have no conflicts of interest.

\bibliographystyle{plain}
 \bibliography{ref}
\end{document}